%% file: Main.tex
\documentclass[11pt]{elsarticle}
\usepackage{setspace}
\usepackage[plain]{fullpage}
\usepackage{algorithm}
\usepackage{algorithmicx}
\usepackage[noend]{algpseudocode}
\usepackage{algpascal}
\usepackage{epsfig}
\usepackage{enumerate}
\usepackage{paralist}
\usepackage{color}
\usepackage{srcltx}
\usepackage{amsmath, amssymb, amsthm}
\usepackage{tikz}
\usetikzlibrary{decorations.pathmorphing}
\usetikzlibrary{decorations.pathreplacing}
\usepackage{relsize}
\usepackage{graphics}
\usepackage{booktabs}
\input{gendefs}

\input{artdefs}

\input{Julien}

\newtheorem{remark}{Remark}

\begin{document}
\begin{frontmatter}
\title{Totally Disjoint Diametral Paths\\ \today}
\author[uo]{Arthur M. Farley}
\ead{art@cs.uoregon.edu}
\author[boun]{T\i naz Ekim}
\ead{tinaz.ekim@bogazici.edu.tr}

\address[uo]{University of Oregon, Eugene, Oregon 97403 USA}
\address[boun]{Bogazici University, Department of Industrial Engineering, 34342, Istanbul, Turkey}

\begin{abstract}
\input{00-Abstract}
\end{abstract}

\begin{keyword}
Graph diameter, disjoint paths, graph classes, extremal graphs.
\end{keyword}
\end{frontmatter}

\section{Introduction}\label{sec:intro}
\input{01-Introduction}

\section{Complexity of $k$-TDDP}\label{sec:complexity}
\input{02-Complexity}

\section{Maximum Number of Totally Disjoint Diametral Paths in Certain Graph Classes}\label{sec:exact}
\input{03-ExactResults}

\section{Extremal Graphs}\label{sec:extremal}
\input{04-ExtremalGraphs}

\section{Conclusion}

\input{05-Conclusion}

\bibliographystyle{abbrv}
\bibliography{References}

\end{document}

%% file: gendefs.tex
\usepackage{amsmath}
\usepackage{amssymb}
\usepackage{color}
\definecolor{red}{RGB}{255,0,0}
\definecolor{blue}{RGB}{0,0,255}
\definecolor{green}{RGB}{0,255,0}

\newcommand{\ignore}[1] {}

%% file: artdefs.tex
\newtheorem{theorem} {Theorem}
\newtheorem{lemma} {Lemma}

\newtheorem{corollary}  {Corollary}

%% file: Julien.tex
\usepackage{tikz}

%% file: 00-Abstract.tex
In this paper, we study totally disjoint diametral paths in simple connected graphs.  A diametral path in a graph is a shortest path that connects two vertices whose mutual distance is equal to the diameter of the graph. Totally disjoint paths are paths that have no vertices in common, including their end vertices.  We show that the problem of deciding whether a graph $G$ has $k$ totally disjoint diametral paths is NP-complete.  We consider restricted classes of graphs for which the problem of determining the maximum size of a set of totally disjoint diametral paths is readily solved.  We then give a linear-time algorithm for a subclass of maximal outerplanar graphs called 2-paths, define a polynomial-time algorithm for threshold graphs, and establish a structural bound for proper interval graphs. Finally, we define classes of extremal graphs with $k$ totally disjoint diametral paths of length $d$ having the fewest possible number of edges.

%% file: 01-Introduction.tex
A graph's diameter is an upper limit on the distance between vertices in a graph. Totally disjoint paths are non-interacting ways to traverse parts of a graph.  This paper puts these two notions together, exploring problems involving totally disjoint diametral paths in graphs. Such paths model scenarios where one needs multiple long-haul routes across a network that share no infrastructure whatsoever, not even endpoints, so that each route is a fully independent worst-case traversal. This is natural in fault-tolerant routing and redundant backbone design, where the diameter captures the critical (longest) communication delay and total disjointedness guarantees that no single vertex failure can disrupt more than one such critical route.

A simple graph $G=(V,E)$ is a set of vertices $V$ and a set of edges $E$, such that each edge $(u, v)$ in $E$ connects a distinct pair of vertices (i.e., \emph{neighbors}) $u$ and $v$ in $V$.  The \emph{degree} of a vertex is the number of neighbors the vertex has.  A \emph{path} of length $k$ in a simple graph G is a sequence of distinct vertices $(v_0, v_2, \ldots, v_k)$, where $(v_i, v_{i+1})$ is an edge in $E$ for $0 \leq i < k$; the path \emph{connects} $v_0$ and $v_k$.  A path of length $k$ contains $k$ edges and $k+1$ vertices. A graph $G$ is \emph{connected}, if between every pair of vertices in $V$ there exists a path connecting them.  In this paper, every graph $G$ is a simple, connected graph.

The \emph{distance} between a pair of vertices in $G$ is the length of a shortest path connecting the two vertices.  The \emph{diameter} of a graph $G$ is the greatest distance between any pair of vertices in $G$.  Pairs of vertices having distance equal to the diameter are called \emph{diametral pairs}; a shortest path between a diametral pair is called a \emph{diametral path}.  There can be more than one diametral pair in a graph and, between each such pair, there can be more than one diametral path.

In this paper, a pair of paths in a graph is \emph{totally disjoint} if the two paths have no vertex in common, \emph{including their end vertices}.  A set of paths of a connected graph is a \emph{set of totally disjoint paths} if every pair of paths in the set is totally disjoint. We denote by $k$-TDDP the problem of deciding if a given graph $G$ has a set of totally disjoint diametral paths of size $k$. The maximum size of a set of totally disjoint diametral paths in $G$ is denoted by $\#TDDP(G)$.

Interest in disjoint paths in graphs has a long history \cite{SEYMOUR1980293,even1975}, with applications to routing and commodity flows. This research primarily has focused primarily on edge-disjoint paths or vertex-disjoint paths that share the same end vertices. This line of research is mainly related to the well-known Menger's Theorem and its variants \cite{menger}. Here, unlike those studies, we focus on sets of paths having no vertices in common, including their end vertices. Requiring distinct end vertices changes the problem fundamentally: rather than measuring the connectivity between a fixed pair (as in Menger-type results), we ask how many fully independent diametral routes a network can simultaneously support. This quantity can be viewed as a measure of a network's redundant "long-distance capacity" at its critical scale. A much-studied $k$ disjoint paths problem involves finding pairwise vertex disjoint paths between $k$ specified pairs of vertices within a given graph or deciding that they do not exist (see \cite{Frank90} and \cite{Kawa2011} for a survey on the topic). There are versions of the $k$ disjoint paths problem where the paths between the given pairs of vertices are required to be of shortest length (for instance in \cite{DSP2} and \cite{DSP1}) or not (for instance, \cite{ROBERTSON199565}). These problems are at the heart of several recent studies (\cite{DSPlinear, 2DSP}) due to their connections with both combinatorial optimization problems and the theory of graph minors \cite{ROBERTSON199565}. A well-studied version is the shortest $k$ disjoint paths problem where one looks for $k$ vertex disjoint shortest paths between $k$ given pairs of vertices such that their \textit{total} distance is minimized. The decision version of this problem is NP-hard even under several restrictions; see \cite{FPT} and the references therein for a recent summary of the extensive literature on the related results. As noted, these problems have applications in multi-commodity flows and VLSI design.

In this paper, we first address the complexity of $k$-TDDP in arbitrary graphs in Section \ref{sec:complexity}. We prove that deciding if an arbitrary graph contains a set of at least $k$ totally disjoint diametral paths is NP-complete. This places $k$-TDDP among the disjoint-path problems that remain intractable in general, motivating the search for tractable structured classes that follows. In Section \ref{sec:exact}, we consider several graph classes for which one can readily determine the maximum size of a set of totally disjoint diametral paths, including efficient algorithms for 2-paths (a subclass of maximal outerplanar graphs) and threshold graphs, and establish a bound for proper interval graphs. 
Finally, we consider the related extremal graph problem in Section \ref{sec:extremal}. Given parameters $d$ and $k$, we describe classes of graphs with diameter $d$ having $k$ totally disjoint diametral paths with the minimum possible number of edges.

%% file: 02-Complexity.tex
Let $k$-TDDP be the problem of deciding if a given graph $G$ admits at least $k$ totally disjoint diametral paths. This problem is related to the \emph{k disjoint shortest paths problem}, denoted by $k$-DSP: Given a graph $G$ and $k$ pairs of distinct vertices $(s_0, t_0), (s_1, t_1), \ldots, (s_{k-1}, t_{k-1})$, determine if there exist $k$ pairwise vertex disjoint \emph{shortest} paths $p_i$ between $s_i$ and $t_i$ for each $i,  0 \leq i \leq k-1$  \cite{DSP1}. This problem is known to be $NP$-complete when $k$ is part of the input, even for planar graphs \cite{DSP1}. We will use this result to show that $k$-TDDP is NP-complete.

Developing efficient algorithms for $k$-DSP for fixed $k$ (in particular for $k=2$) is an active research area \cite{DSPlinear, 2DSP}. In a recent result, considering shortest paths where $k$ is fixed, Lochet \cite{LOCHET2021} presents an algorithm with a running time $n^{O(k^{5^k})}$ that decides whether there exist $k$ disjoint shortest paths between $k$ given pairs of vertices; this was subsequently improved to $n^{O(k!\,k)}$ by Bentert et al.\ \cite{BENTERT2021}. This result implies a polynomial time algorithm for $k$-TDDP when $k$ is fixed by considering all sets of diametral pairs of size $k$, of which there are only polynomially many. Our $NP$-hardness result for $k$-TDDP holds when $k$ is part of the input, while for every fixed $k$ the enumeration above yields a polynomial time algorithm. We note that Lochet~\cite{LOCHET2021} (see also Bentert et al.~\cite{BENTERT2021}) also showed that $k$-DSP is W[1]-hard parameterized by $k$, so for $k$-DSP the dependence of the exponent on $k$ presumably cannot be removed. Whether $k$-TDDP is likewise W[1]-hard parameterized by $k$ is open.

A related problem is the vertex disjoint paths problem, where mutually vertex disjoint, not necessarily shortest, paths between $k$ given pairs of distinct vertices must be found. This problem is known to be NP-complete \cite{karp}, and remains NP-complete on split graphs \cite{Heggernes2015finding}. In \cite{ROBERTSON199565}, Robertson and Seymour state that if $k$ is fixed, then given a graph $G$ and $k$ pairs of vertices of $G$, one can decide if there are $k$ mutually vertex disjoint paths in $G$ joining the given pairs in time $O(n^3)$ where $n$ is the number of vertices in $G$. In \cite{Heggernes2015finding}, Heggernes et al. show that this problem admits a kernel with $O(k^2)$ vertices when restricted to split graphs.

We establish the NP-completeness of $k$-TDDP in general (connected) graphs using a reduction from $k$-DSP. The reduction transforms an instance $G$ of $k$-DSP by adding new vertices attached to the vertex pairs $(s_i,t_i)$ for each $i$ with paths whose lengths are carefully chosen with respect to the diameter of $G$. Then we show that there are $k$ disjoint shortest paths in $G$ between the given pairs $(s_i,t_i)$ for each $i$ if and only if there are $k$ totally disjoint diametral paths in the newly constructed graph. We note that even though the input graph is planar, our transformation does not preserve planarity; thus we obtain the NP-completeness of $k$-TDDP for general graphs, but not for planar graphs using the result in \cite{DSP1}.

\begin{theorem}\label{thm:complexity}
$k$-TDDP is $NP$-complete for an arbitrary connected graph $G$ and positive integer $k\geq 2$. 
\end{theorem}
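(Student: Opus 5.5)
The plan is a polynomial reduction from $k$-DSP, which is NP-complete when $k$ is part of the input \cite{DSP1}. Membership in NP is immediate. A certificate is a family of $k$ paths, and we check in polynomial time that they are pairwise totally disjoint and that each is a shortest path whose length equals the diameter, which we compute by BFS from every vertex.

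For hardness, take an instance $G$, $(s_0,t_0),\dots,(s_{k-1},t_{k-1})$ of $k$-DSP. Compute $D=\mathrm{diam}(G)$ and $d_i=\mathrm{dist}_G(s_i,t_i)$. We may assume the $2k$ terminals are distinct, since otherwise the answer is trivially no. Build $G'$ by attaching to each $s_i$ a new pendant path $P_i$ of length $a_i$ ending in $x_i$, and to each $t_i$ a new pendant path $Q_i$ of length $b_i$ ending in $y_i$. The lengths are chosen so that $a_i+d_i+b_i=L$ for a common target $L$, with every $a_i,b_i$ much larger than $D$; for instance $a_i=A$ and $b_i=A+(D-d_i)$ with $A\gg D$, giving $L=2A+D$. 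Because the gadgets are pendant, any $x_i$--$y_i$ shortest path in $G'$ is $P_i$, then a shortest $s_i$--$t_i$ path of $G$, then $Q_i$. Hence $k$ disjoint shortest paths in $G$ lift directly to $k$ totally disjoint $x_i$--$y_i$ paths of length $L$.

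The main obstacle is forcing the diameter of $G'$ to be exactly $L$, attained only by the intended pairs $\{x_i,y_i\}$. With bare pendant paths, cross pairs are too far apart: $x_i$ and $y_j$ can be at distance up to $A+D+A+(D-d_j)>L$. Cross pairs must therefore be brought closer without shortening any $x_i$--$y_i$ distance below $L$.

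\begin{itemize}
\item The first attempt is to make $\{x_0,\dots,x_{k-1}\}$ a clique and $\{y_0,\dots,y_{k-1}\}$ a clique. Then a detour $x_i\to x_j\to\cdots\to y_j\to y_i$ costs $L+2$, which does no harm. Pairs within the same side also become close.
\item For a mixed pair $x_i,y_j$ with $i\neq j$, we still need a distance of at most $L-1$. To get it, add connector edges from each $x_i$ to an interior vertex $q_j$ of $Q_j$, for $j\neq i$. The vertex $q_j$ is placed at distance roughly $L/2$ from $y_j$. This places $x_i$ and $y_j$ at distance about $L/2+1$.
\item Symmetrically, add edges from each $y_i$ to an interior vertex of $P_j$, for $j\neq i$.
\item We must then verify that no $x_i$--$y_i$ route through a connector has length below $L$. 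Such a route has the form $x_i\to q_j\to\cdots$, then through $G$, then to $y_i$, and it must re-enter $G$ and climb $Q_i$ or a connector again. Taking $A$ large relative to $D$ makes every such route pay at least two half-gadget lengths plus extra, hence at least $L$.
\item We must also check that vertices of $G$ and interior gadget vertices are within distance $L$ of everything, and strictly less except at the intended pairs. This is a routine but careful case analysis on where the two vertices lie.
\end{itemize}

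If the connector placement needs tuning, the fallback is to subdivide the connector edges into paths of chosen length, keeping the same kind of verification.

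For the converse, suppose $G'$ has $k$ totally disjoint diametral paths. Every diametral pair is some $\{x_i,y_i\}$, so the $2k$ endpoints use each index exactly once. Every shortest $x_i$--$y_i$ path avoids the connectors and consists of $P_i$, a shortest $s_i$--$t_i$ path in $G$, and $Q_i$. Restricting the paths to $G$ therefore yields the $k$ disjoint shortest paths required by $k$-DSP. The construction has polynomial size, since $A$ is polynomial in $|V(G)|$. The case $k\geq 2$ is covered because $k$-DSP remains hard for $k$ part of the input, and these instances have $k\geq 2$.
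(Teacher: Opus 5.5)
Your skeleton matches the paper's: NP membership, a reduction from $k$-DSP, long pendant paths whose lengths compensate for $d_i$, and extra inter-row connections so that cross pairs are close. The gap is the concrete gadget. Both of your connections let a route switch rows too cheaply and then take a shortcut inside $G$, so $\mathrm{dist}_{G'}(x_i,y_i)$ drops below $L$. The bullets claiming this cannot happen are false.

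\textbf{The clique on the $x_i$.} You only checked the detour through $x_j\to y_j$. Consider instead the walk $x_i,x_j$, then down $P_j$ to $s_j$, then a shortest $s_j$--$t_i$ path in $G$, then up $Q_i$ to $y_i$. Its length is $1+A+\mathrm{dist}_G(s_j,t_i)+A+(D-d_i)=L+1+\mathrm{dist}_G(s_j,t_i)-d_i$. This is less than $L$ whenever $\mathrm{dist}_G(s_j,t_i)\le d_i-2$. For example, take $G$ to be the path $u_0\cdots u_5$, with $(s_0,t_0)=(u_0,u_3)$ and $(s_1,t_1)=(u_4,u_5)$. This is a yes-instance of $k$-DSP, yet your $G'$ has $\mathrm{dist}_{G'}(x_0,y_0)\le 2A+4=L-1$. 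The clique on the $y_i$ fails symmetrically.

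\textbf{The connector edges.} These fail more badly. Since $|Q_j|-L/2=D/2-d_j<D/2$, a vertex of $Q_j$ at distance about $L/2$ from $y_j$ lies within $D/2$ of $t_j$, and it does not exist at all when $d_j>D/2$. So the edge $x_iq_j$ gives $\mathrm{dist}_{G'}(x_i,t_i)\le 1+D/2+D$, and hence $\mathrm{dist}_{G'}(x_i,y_i)\le A+O(D)$, far below $L=2A+D$. Enlarging $A$ makes this worse, not better. Such a route pays only one gadget length, $|Q_i|$, not ``two half-gadget lengths''.

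As a result, neither direction of your equivalence holds. Lifts of disjoint shortest paths need not be shortest in $G'$, and you have no control over which pairs of $G'$ are diametral.

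\textbf{What is missing.} The inter-row connections must attach far from $G$ and be long enough that any route leaving row $i$ and returning costs at least $L$ plus the largest available saving. That saving is of order $D$: it comes from the $G$-segment and from differences in the slack lengths. The paper does this with $L=5D$:
\begin{itemize}
\item $a_i$ is at distance $3D-d_i$ from $s_i$, and $b_i$ at distance $2D$ from $t_i$;
\item bridge vertices sit at distance $D$ from each end;
\item bridging and crossing paths of length $2D$ join the bridge vertices.
\end{itemize}
Every row-switching $a_i$--$b_i$ route then has length at least $6D$, while pairs $a_i,b_j$ and same-side pairs are at distance $4D$.

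Your ``subdivide the connectors'' fallback points in this direction. But choosing those attachment points and lengths, and carrying out the case analysis you call routine, is the actual content of the hardness proof. As written, the proposal does not supply it.
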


\begin{proof}
Given a graph $G$ and a set of $k$ paths, one can check if these $k$ paths are pairwise totally disjoint diametral paths in polynomial time. Indeed, this can be done by using well-known polynomial-time algorithms to compute shortest paths, such as executing Dijkstra's algorithm \cite{dijkstra1959note},
from all nodes or employing the Floyd-Warshall algorithm \cite{floyd1962algorithm} for all-pairs computation. Thus, $k$-TDDP is in $NP$. 
    
Given an instance $I$ of $k$-DSP which is a graph $G$ and $k$ pairs of distinct vertices $(s_i, t_i)$ for $0 \leq i \leq k-1$, we construct a graph $G'$ from $G$ by adding vertices and edges, while leaving the graph $G$ itself unchanged. Let $D$ be the diameter of the graph $G$.  For each pair $s_i,t_i$ create the following ``row" $i$. To each vertex $s_i$ connect a vertex $a_i$ by a path of length $2D+D-d_i$, where $d_i$ is the distance between $s_i$ and $t_i$ in $G$. To each vertex $t_i$ connect a vertex $b_i$ by a path of length $2D$. The distance between each $a_i$ and $b_i$ through the row’s path and $G$ is $5D$. 
    
Between each pair of rows $i,j$ connect the vertex at distance $D$ from $a_i$ to the vertex at distance $D$ from $a_j$ by a path of length $2D$. Similarly, connect the vertex at distance $D$ from $b_i$ to the vertex at distance $D$ from $b_j$ by a path of length $2D$. We call these paths between rows \textit{bridging paths} and refer to these vertices at distance $D$ from $a_i$ and from $b_i$ as \textit{bridge} vertices (shown by black circles in Figure \ref{fig:construction}). Additionally, connect the bridge vertex near $a_i$ to the bridge vertex near $b_j$ by a path of length $2D$ and the bridge vertex near $a_j$ to the bridge vertex near $b_i$ by a path of length $2D$; we refer to these paths as \textit{crossing paths}. See Figure \ref{fig:construction}. The path in row $i$ between $a_i$ and $b_i$ of length $5D$ is the shortest path between those two vertices.  Note that there is a path between $a_i$ and $b_i$ using a crossing path between row $i$ and $j$ and a bridging path back to row $i$ that has length $6D$, which is greater.

Now, we show that the row paths between every $a_i$ and $b_i$ are the only diametral paths of the graph $G’$, i.e., that the distance between every other pair of vertices is less than $5D$. By inspection, the distance between any $a_i$ and $a_j$, for $i \neq j$, is $4D$, using a bridging path, and similarly for any $b_i$ and $b_j$.  The distance between any pair $a_i$ and $b_j$, for $i \neq j$, is $4D$, using a crossing path. The distance between any other pair of vertices on these paths is less than $4D$.  The maximum distance between two vertices that both lie on the crossing and bridging paths is $4D$. The distance from $a_i$ or $b_i$ to the bridging vertex on the other side of $G$ in row $i$ is $4D$.  The distance from $a_i$ or $b_i$ to a bridging vertex in a different row $j$ is $3D$.  Thus, any vertex on a bridging or crossing path can be reached within distance less than $5D$ from any vertex in $G'$.
    
It remains to discuss the set of vertices $H$ lying between bridging vertices and graph $G$. As already noted, the distance from any $a_i$ or $b_i$ to either bridging vertex in another row $j$ is $3D$, so any vertex in $H$ can be reached in distance at most $5D-d_j$, with $d_j > 0$.  The distance between any two vertices in $H$ going through $G$ is less than $5D-d_i - d_j$; there are shorter paths between pairs, using bridging or crossing edges.

Thus, the row paths between $a_i$ and $b_i$, for $0 \leq i \leq k-1$, are the only diametral paths of $G'$. A pair of diametral paths, one in row $i$ and one in row $j$ of $G'$, are totally disjoint if and only if the shortest paths between the pair $(s_i,t_i)$ and the pair $(s_j,t_j)$ are vertex disjoint in $G$. Therefore, there are $k$ disjoint shortest paths in $G$ between the given pairs of $s$ and $t$ vertices if and only if there are $k$ totally disjoint diametral paths in $G’$.
   
It follows that $k$-TDDP is $NP$-complete.
\end{proof}

\begin{figure}[htb]
    \centering
    \begin{tikzpicture}

  % Define the layers
    \pgfdeclarelayer{background}
    \pgfsetlayers{background,main}
    
    % Nodes
    \node[draw, circle, inner sep=2pt] (ai) at (0,1.6) {};
    \node[left=1pt] at (ai.west) {$a_{i}$};

    \node[draw, circle, fill=black, inner sep=2pt] (a1) at (2,1.6) {};
    
    \node[draw, circle, inner sep=2pt] (a2) at (4,1.6) {};
    
    \node[draw, circle, inner sep=2pt] (si) at (5.5,1.6) {};
    \node[] at (5.6,1.25) {$s_{i}$};

    \node[draw, circle, inner sep=2pt] (ti) at (9,1.6) {};
    \node[] at (8.8,1.3) {$t_{i}$};
    
    \node[draw, circle, fill=black, inner sep=2pt] (b1) at (11,1.6) {};
    
    \node[draw, circle, inner sep=2pt] (bi) at (13,1.6) {};
    \node[right=2pt] at (bi.east) {$b_{i}$};
   
    \node[draw, circle, inner sep=2pt] (aj) at (0,-0.6) {};
    \node[left=1pt] at (aj.west) {$a_{j}$};

    \node[draw, circle, fill=black, inner sep=2pt] (a11) at (2,-0.6) {};
    
    \node[draw, circle, inner sep=2pt] (a22) at (4,-0.6) {};
    
    \node[draw, circle, inner sep=2pt] (sj) at (5.5,-0.6) {};
    \node[] at (5.6,-0.3) {$s_j$};
    
    \node[draw, circle, inner sep=2pt] (tj) at (9,-0.6) {};
    \node[] at (8.75,-0.3) {$t_{j}$};

    \node[draw, circle, fill=black, inner sep=2pt] (b11) at (11,-0.6) {};
    
    \node[draw, circle, inner sep=2pt] (bj) at (13,-0.6) {};
    \node[right=2pt] at (bj.east) {$b_{j}$};
    
    % Graph G
   
    \begin{pgfonlayer}{background}
        \node[draw, rounded corners, fill=gray!20, minimum width=3.8cm, minimum height=3.2cm] at (7.25,0.5) {};
    \end{pgfonlayer}
    
    \node at (6.75,0) {\larger[4]{$G$}};

    % Horizontal Edges
    \draw[decorate, decoration={snake, amplitude=0.5mm, segment length=3mm}] (ai) -- (a1);
    \draw [decorate, gray, decoration={brace,amplitude=5pt, raise=1.3ex}]
  (ai) -- (a1) node[midway,black, yshift=2em]{$D$};
    \draw [decorate, gray, decoration={brace,amplitude=5pt,raise=1.3ex}]
  (a1) -- (a2) node[midway, black, yshift=2em]{$D$};
  \draw [decorate, gray, decoration={brace,amplitude=5pt,raise=1.3ex}]
  (a2) -- (si) node[midway, black, yshift=2em]{$D-d_i$};
    \draw [decorate, decoration={snake, amplitude=0.5mm, segment length=3mm}] (a1) -- (a2);
    \draw[decorate, decoration={snake, amplitude=0.5mm, segment length=3mm}] (a2) -- (si);
     \draw[decorate, decoration={snake, amplitude=0.5mm, segment length=3mm}] (ti) -- (b1);
     \draw[decorate, decoration={snake, amplitude=0.5mm, segment length=3mm}] (b1) -- (bi);
    \draw [decorate, gray, decoration={brace,  amplitude=5pt,mirror, raise=1.3ex}]
  (bi) -- (b1) node[midway,black, yshift=2em]{$D$};
    \draw [decorate, gray, decoration={brace,amplitude=5pt, mirror, raise=1.3ex}]
  (b1) -- (ti) node[black, midway,yshift=2em]{$ D $};
  \draw[decorate, lightgray, decoration={snake, amplitude=0.5mm, segment length=3mm}] (si) -- (ti);
    \draw [decorate, gray, decoration={brace,  amplitude=5pt, raise=1.3ex}]
  (si) -- (ti) node[midway,black, yshift=2em]{$d_i$};

  %LINE j

    \draw[decorate, decoration={snake, amplitude=0.5mm, segment length=3mm}] (aj) --  (a11);
    \draw [decorate, decoration={snake, amplitude=0.5mm, segment length=3mm}] (a11) -- (a22);
    \draw[decorate, decoration={snake, amplitude=0.5mm, segment length=3mm}] (a22) -- (sj);
    \draw[decorate, lightgray, decoration={snake, amplitude=0.5mm, segment length=3mm}] (sj) -- (tj);
     \draw[decorate, decoration={snake, amplitude=0.5mm, segment length=3mm}] (tj) -- (b11);
    \draw[decorate, decoration={snake, amplitude=0.5mm, segment length=3mm}] (b11) --  (bj);

    %line j lenghts
    \draw [decorate, gray, decoration={brace,amplitude=5pt, mirror, raise=1.3ex}]
  (aj) -- (a11) node[midway,black, yshift=-2em]{$D$};
    \draw [decorate, gray, decoration={brace,amplitude=5pt, mirror, raise=1.3ex}]
  (a11) -- (a22) node[midway, black, yshift=-2em]{$D$};
  \draw [decorate, gray, decoration={brace,amplitude=5pt,mirror,raise=1.3ex}]
  (a22) -- (sj) node[midway, black, yshift=-2em]{$D-d_j$};

  \draw [decorate, gray, decoration={brace,  amplitude=5pt, raise=1.3ex}]
  (bj) -- (b11) node[midway,black, yshift=-2em]{$D$};
    \draw [decorate, gray, decoration={brace,amplitude=5pt, raise=1.3ex}]
  (b11) -- (tj) node[black, midway,yshift=-2em]{$ D $};
      \draw [decorate, gray, decoration={brace,  amplitude=5pt, mirror, raise=1.3ex}]
  (sj) -- (tj) node[midway,black, yshift=-2em]{$d_j$};
    
    % Vertical and Diagonal Edges

    \draw[decorate, decoration={snake, amplitude=0.5mm, segment length=5mm}] (a1) .. controls (7, 3.5) and (9.5, 3.5) ..  (b11);

    \node at (8.9,2.9) {$2D$};
     \node at (8.9,-1.9) {$2D$};
    \draw[decorate, decoration={snake, amplitude=0.5mm, segment length=5mm}]  (a11) .. controls (7, -2.5) and (9.5, -2.5) .. (b1);
    \draw[decorate, decoration={snake, amplitude=0.5mm, segment length=5mm}] (a1) --  (a11)  node[midway,black, xshift=-1em]{$2D$};
    \draw[decorate, decoration={snake, amplitude=0.5mm, segment length=5mm}] (b1) --  (b11) node[midway,black, xshift=1em]{$2D$};

   % \draw [decorate, gray, decoration={brace, mirror, amplitude=5pt, raise=1.3ex}]
  (%ai) -- (aj) node[midway,black, xshift=-2em]{$N$};
  %\draw [decorate, gray, decoration={brace,  amplitude=5pt, raise=1.3ex}]
  %(bi) -- (bj) node[midway,black, xshift=2em]{$N$};

\end{tikzpicture}
    \caption{Two diametral paths between pairs $(a_i,b_i)$ and $(a_j, b_j)$ in $G'$ where curly lines show paths with length denoted next to them. Gray area depicts the graph $G$. Likewise, the edges on the gray paths between pairs $(s_i,t_i)$ and $(s_j,t_j)$ belong to $G$ whereas all the others show paths consisting of the edges in $G'\setminus G$.}
    \label{fig:construction}
\end{figure}

%% file: 03-ExactResults.tex
In this section, we investigate several classes of graphs for which we can determine or bound the maximum size of a set of totally disjoint diametral paths, $\#TDDP(G)$. As a first example, we consider tree graphs.  A $tree$ $T_n$ is a connected acyclic graph on $n$ vertices having $n-1$ edges. 

\begin{theorem} \label{thm:tree}
 $\#TDDP(T_n) = 1$, for all $n\geq1$.
\end{theorem}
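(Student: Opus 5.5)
The lower bound is immediate: $T_n$ is connected, so it has a diametral pair $u,v$ at distance $d=\mathrm{diam}(T_n)$, and the unique $u$--$v$ path is a diametral path. Hence $\#TDDP(T_n)\ge 1$. For $n=1$ the single vertex forms a diametral path of length $0$, and nothing more can be said. The real content is therefore the upper bound: no two diametral paths of a tree are totally disjoint. We in fact aim for the stronger statement that any two diametral paths of a tree share a vertex.

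The plan is to use the classical centre argument. Let $P=(u=v_0,\dots,v_d=v)$ and $Q=(x=w_0,\dots,w_d=y)$ be two diametral paths, and suppose for contradiction that they are vertex disjoint. Since $T_n$ is a tree, there is a unique shortest path $R$ joining $V(P)$ to $V(Q)$. Let its endpoints be $v_i\in P$ and $w_j\in Q$, and let its length be $\ell\ge 1$. Disjointness is exactly what gives $\ell\ge 1$.

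By symmetry we may choose endpoints so that $i\ge d/2$ (otherwise reverse $P$) and $j\ge d/2$ (otherwise reverse $Q$). Now consider the walk from $v_0$ along $P$ to $v_i$, then along $R$ to $w_j$, then along $Q$ back to $w_0$. Because $R$ meets $P$ and $Q$ only at its endpoints, this walk is a path in the tree, hence it is the unique path between $v_0$ and $w_0$. Its length is
\[ i+\ell+j \;\ge\; d/2+1+d/2 \;=\; d+1 \;>\; d, \]
which contradicts the fact that $d$ is the diameter. So $P$ and $Q$ intersect, and $\#TDDP(T_n)\le 1$.

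The only delicate step is justifying that the concatenated walk is a genuine path, so that its length equals the distance. Acyclicity gives uniqueness of paths, and the minimality of $R$ guarantees that $R$ has no interior vertices on $P$ or $Q$. The endpoint-choice trick (picking the farther end of each path) is what makes the inequality work. Combining the two bounds gives $\#TDDP(T_n)=1$.
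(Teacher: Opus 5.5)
Your proof is correct, but it takes a different route from the paper. The paper relies on a classical fact about trees: the vertex center is a single vertex or two adjacent vertices, and every diametral path passes through it. All diametral paths therefore share a common vertex, and the paper states this fact rather than proving it.

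You instead prove pairwise intersection directly. Assuming $P$ and $Q$ are disjoint, you take a shortest $V(P)$--$V(Q)$ connector $R$. You orient each path so that its attachment point is at distance at least $d/2$ from the starting end. The concatenation is then a genuine path, so by uniqueness of paths in a tree it is a geodesic of length at least $d+1$.

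What your version buys:
\begin{itemize}
\item It is self-contained.
\item It isolates exactly where acyclicity is used: the concatenated path must be the shortest one. That is precisely what fails in graphs with cycles, such as the enhanced cycles $EC_n$, which have two totally disjoint diametral paths.
\item It explicitly handles the trivial lower bound and the case $n=1$, which the paper leaves implicit.
\end{itemize}

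What the paper's version buys is a formally stronger global conclusion: a single vertex common to all diametral paths. In a tree this also follows from your pairwise statement by the Helly property of subtrees.

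One small remark: you do not need $R$ to be unique. Any shortest connector works, and its minimality alone guarantees that its interior avoids $V(P)\cup V(Q)$.
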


\begin{proof}
    The \emph{vertex center of a graph} is the set of vertices that minimize the maximum distance to other vertices of the graph. It is known that the vertex center of a tree $T_n$ is a single vertex or two neighboring vertices and that every diametral path of a tree goes through its vertex center. Therefore, $\#TDDP(T_n) = 1$, for all $n\geq1$.
\end{proof}

\subsection{Parameterized Graphs} 

Here we consider classes of graphs that have a fixed size and structure given values of one or more integer parameters.  Cycle $C_n$ can be described as a set of distinct vertices $v_0, v_1, v_2, \ldots, v_{n-1}$ with edges $(v_i, v_{i+1})$ for $0 \leq i < n-1$, plus edge $(v_{n-1}, v_0)$.  We define $EC_n$, for odd $n$, to be $C_n$ enhanced by two edges and a new vertex $u$ with edges $(v_{\lfloor n/2 \rfloor}, u)$ and $(u, v_{\lceil n/2 \rceil +1})$. See Figure \ref{fig:CC22} for $EC_7$. A $CC_{2n}$, for $n>1$, has $2n$ vertices $v_0, v_1, \ldots, v_{2n-1}$ with edges $(v_i, v_{i+1})$ for $0 \leq i<2n-1$, edge $(v_{2n-1}, v_0)$, and chordal edges $(v_i, v_{i+n})$ for $i<n$.  All vertices look the same in a $CC_{2n}$, as it is a symmetric graph. See Figure \ref{fig:CC22} for $CC_{22}$. The Rectangular Grid Graph $RGG_{mn}$ is the graph whose vertices correspond to the points in the plane with integer coordinates, where $x$-coordinates are in the range $1, ..., n$, and $y$-coordinates are in the range $1, ..., m$; we designate the vertices as $v_{x,y}$. Two vertices are connected by an edge whenever the corresponding points are at distance $1$ in the plane.  Finally, the complete bipartite graph $B_{m,n}$ consists of two sets of vertices of sizes $m$ and $n$, where every vertex of each set has an edge connecting it to every vertex in the other set, with no edges between vertices in the same set. 

\begin{theorem}\label{thm:fixed}
    \begin{enumerate}[i.]
    
\item Let $K_n$ be the complete graph on $n$ vertices, then $\#TDDP(K_n) = \lfloor n/2 \rfloor$, for all $n>1$.

\item Let $C_n$ be the cycle on $n$ vertices, then $\#TDDP(C_n) = 1$, for all $n>2$.

\item Let $EC_n$ be an enhanced cycle for odd $n$, then $\#TDDP(EC_n) = 2$, for all odd $n>2$.

\item Let $CC_{2n}$ be an even cycle with diameter chords, then $\#TDDP(CC_{2n}) = 2$, for $2<n<6$ and $7$;
		  	    $\#TDDP(CC_{2n}) = 3$, for $n\geq6$, except $7$.
\item Let $RGG_{m,n}$ be a Rectangular Grid Graph, then $\#TDDP(RGG_{m,n}) = 1$.

\item Let $B_{m,n}$ be a complete bipartite graph where $m\geq n$, then 
				$\#TDDP(B_{m,n}) = n$, for $m\geq 2n$   
    
				$\#TDDP(B_{m,n}) =  \lfloor (m+n)/3 \rfloor$, for $m<2n$.

    \end{enumerate}
\end{theorem}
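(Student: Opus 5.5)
The six parts are independent, so I will treat them one at a time. Each follows the same two-step pattern. First I determine the diameter and describe all diametral paths. Then I give an explicit totally disjoint family, and prove a matching upper bound by a counting argument, an intersection argument, or a short exhaustive analysis.

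For (i), every edge of $K_n$ is a diametral path of length $1$. A perfect or near-perfect matching therefore gives $\lfloor n/2\rfloor$ totally disjoint diametral paths, and each path uses two vertices, so no more are possible.

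For (ii), set $d=\lfloor n/2\rfloor$. Each diametral path has $d+1$ vertices, so two disjoint ones would need $2d+2>n$ vertices, which is impossible.

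For (v), the diameter is $(m-1)+(n-1)$. Diametral pairs are exactly the two pairs of opposite corners, and a shortest path between opposite corners is monotone, so it crosses every antidiagonal (respectively, every diagonal). I will show that any monotone path from $v_{1,1}$ to $v_{n,m}$ meets any monotone path from $v_{n,1}$ to $v_{1,m}$, by a discrete intermediate value argument on the difference of their $x$-coordinates along rows. Paths between the same corner pair share endpoints. Hence $\#TDDP=1$. The degenerate cases $m=1$ or $n=1$ are paths, which are covered by Theorem~\ref{thm:tree}.

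For (iii), I will compute directly that adding $u$ keeps the diameter at $\lfloor n/2\rfloor$. I will then exhibit two disjoint diametral paths, one that uses $u$ together with one arc and one that avoids it. For the upper bound, a third path would need $3(\lfloor n/2\rfloor+1)>n+1$ vertices, which exceeds the number available.

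For (vi), assume $m\ge n\ge 1$ and $m\ge2$, so the diameter is $2$. Every diametral path has the form $x\!-\!y\!-\!x'$ with $x,x'$ on the same side, so it uses three vertices, at least two of them on one side and one on the other. Let $p$ be the number of paths with both endpoints on the $m$-side and $q$ the number with both endpoints on the $n$-side. The constraints are $2p+q\le m$, $p+2q\le n$ and $3(p+q)\le m+n$. Maximizing $p+q$ gives $n$ when $m\ge2n$, attained by $p=n$, $q=0$, with the binding constraint $p+2q\le n$. It gives $\lfloor (m+n)/3\rfloor$ otherwise, and here I will choose $p,q$ explicitly to reach the bound. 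Any such choice is realizable because the graph is complete bipartite.

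For (iv), $CC_{2n}$ is the M\"obius ladder, and I first compute its diameter as roughly $\lceil n/2\rceil$; the exact value depends on $n \bmod 4$. For the lower bounds, two disjoint diametral paths come from opposite arcs, and for $n\ge6$, $n\ne7$, I will construct three explicitly, using chords to route the third path. For the upper bounds, the count $4\cdot(\text{diam}+1)>2n$ rules out four paths, and for small $n$ it may also rule out three, using a sharper vertex count.

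The main obstacle is the exceptional cases $n=3,4,5,7$, where I expect the counting bound not to rule out three paths. Here I will use the vertex-transitivity of $CC_{2n}$ to fix one diametral path starting at $v_0$. I will then classify its possible shapes (pure arc, or arc plus chord) and check that the remaining vertices cannot host two disjoint diametral paths. This is a finite case analysis, and I expect it to be tedious rather than deep.
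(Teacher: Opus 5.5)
Your plan for each part is the paper's plan: compute the diameter, bound the number of paths by counting vertices (or, for the grid, by a separation argument), and exhibit an explicit disjoint family. Parts (i), (ii), (v) and (vi) go through as you describe, and your (v) and (vi) are more careful than the paper's. In (v), run the intermediate-value argument on the $x$-intervals that each path occupies in a row, because a monotone path can have several vertices in one row.

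The real gap is in (iii). Your step ``adding $u$ keeps the diameter at $\lfloor n/2\rfloor$'' is false for $n=3$, and so is the statement there. For $n=3$ we get $v_{\lceil n/2\rceil+1}=v_3=v_0$, so $u$ is adjacent to $v_0$ and $v_1$. Then $EC_3\cong K_4-e$ has diameter $2$ with the unique diametral pair $\{u,v_2\}$, and $\#TDDP(EC_3)=1$. So (iii) holds only for odd $n\ge 5$; the paper's proof makes the same diameter claim and has the same defect. For $n=2k+1\ge5$ your computation does work. Since $v_k$ and $v_{k+2}$ are at distance $2$, $u$ shortens no cycle distance, and $d(u,v_j)=1+\min\{d_C(v_k,v_j),d_C(v_{k+2},v_j)\}\le k$.

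The hypothesis $m\ge 2$ that you impose in (vi) is likewise genuinely needed. $B_{1,1}=K_2$ has $\#TDDP=1$, while the formula gives $\lfloor 2/3\rfloor=0$. You should state this as a correction to the statement rather than as a silent assumption.

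In (iv) you have the difficulty backwards. The diameter is exactly $\lceil n/2\rceil$, with no dependence on $n \bmod 4$. Using two chords never helps, so $d(v_0,v_j)=\min\{j,\,n+1-j\}$ for $0\le j\le n$. This formula shows that every arc of $\lceil n/2\rceil+1$ consecutive cycle vertices is already a diametral path, so your constructions need no chords. Two such arcs fit for every $n\ge 3$. Three consecutive arcs fit exactly when $3(\lceil n/2\rceil+1)\le 2n$, that is, for even $n\ge 6$ and odd $n\ge 9$.

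The same inequality fails precisely for $n\in\{3,4,5,7\}$. So the plain vertex count already excludes a third path in the exceptional cases. The vertex-transitivity case analysis you expected to be the main obstacle is unnecessary: it would succeed, but only because the count already decides it. Counting plus consecutive arcs is exactly the paper's argument.
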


\begin{proof}

    \begin{enumerate}[i.]
   
\item Since every vertex in $K_n$ is connected directly to every other vertex, the diameter of $K_n$ is $1$.  Every diametral path includes 2 vertices. Therefore, $\#TDDP(K_n) = \lfloor n/2 \rfloor$, for all $n>1$.

\item 	The diameter of cycle $C_n$ is $\lfloor n/2 \rfloor$; each diametral path contains $\lfloor n/2 \rfloor + 1$ vertices, which is more than half of the vertices in the cycle. Therefore, $\#TDDP(C_n) = 1$, for all $n>2$.

\item 	The diameter of $EC_n$ is $\lfloor n/2 \rfloor$.  Note that vertex $u$ is at diameter distance from both $v_0$ and $v_1$. There are two disjoint diametral paths $(v_0, v_{n-1}) (v_{n-1}, v_{n-2}), \ldots, (v_{\lceil n/2 \rceil+1},v_{\lceil n/2 \rceil})$ and the path $(v_1, v_2) (v_2,v_3), \ldots, (v_{\lfloor n/2 \rfloor}, u)$. Therefore, $\#TDDP(EC_n) = 2$, for all odd $n>2$.
    
\item   $CC_{2m}$ has $3m$ edges.  The diameter of $CC_{2m}$ is $ \lceil m/2 \rceil $; the number of vertices in a diametral path is thus $ \lceil m/2 \rceil + 1$, which is more than one-quarter of the vertices in the graph.    

 	Once $m$ is large enough, three totally disjoint diametral paths can be realized in a $CC_{2m}$ by taking consecutive diametral paths around the cycle. As an example, see Figure \ref{fig:CC22} for $m=11$ (22 vertices) where the diameter is $\lceil m/2 \rceil = 6$. The number of vertices included in a diametral path is $7$; thus, $\#TDDP(CC_{22}) = 3$. Given the number of vertices in a diametral path, there can never be more than three disjoint paths. 
  
	Therefore, 	$\#TDDP(CC_{2m}) = 2$, for $2<m<6$ and $7$;
		  	    $\#TDDP(CC_{2m}) = 3$, for $m\geq6$, except $7$.

\item   $RGG_{m,n}$ has diameter $(m-1)+(n-1)$ with two diametral pairs, $(v_{1,1}, v_{n,m})$ and $(v_{1,m} , v_{n,1})$ In an $RGG_{m,n}$ every path between $v_{1,1}$ and $v_{n,m}$ shares a vertex with every path between $v_{1,m}$ and $v_{n,1}$.  Each “diagonal” path separates the other corners from each other. Therefore, $\#TDDP(RGG_{m,n}) = 1$.

\item The diameter of $B_{m,n}$ is $2$.  Each diametral path goes from a vertex in one set to a vertex in the other set and back, involving two vertices in one set and one in the other.  When $m$ is greater then $2n$, disjoint diametral paths can always use two vertices from the larger set and one from the smaller.  Otherwise, one uses two from either set and one from the other, as needed.

	Therefore, in a $B_{m,n}$ where $m\geq n$,
 
				$\#TDDP(B_{m,n}) = n$, when $m\geq 2n$   
    
				$\#TDDP(B_{m,n}) =  \lfloor (m+n)/3 \rfloor$, when $m<2n$.
\end{enumerate}
\end{proof}

\begin{figure}[htb]
    \centering
\begin{tikzpicture}
 % First figure on the left
    \begin{scope}[shift={(-4,0)}] % Shift this scope to the left

    % Define the nodes in a circular arrangement with a smaller radius
    \foreach \i in {0,...,21} {
        \node[draw, circle, inner sep=2pt] (v\i) at ({360/22 * \i}:2cm) {};
        \node at ({360/22 * \i}:2.5cm) {$v_{\i}$};
    }

    % Draw the edges for the cycle
    \foreach \i in {0,...,20} {
        \pgfmathtruncatemacro{\next}{\i + 1}
        \draw (v\i) -- (v\next);
    }
    \draw (v21) -- (v0);

    % Draw the additional edges
    \foreach \i in {0,...,10} {
        \pgfmathtruncatemacro{\j}{\i + 11}
        \draw (v\i) -- (v\j);
    }

    % Caption and label for the first figure
        %\node[below=3cm, align=center] {Figure 1: First figure\\ \label{fig:figure1}};
\end{scope}

 % Second figure on the right
    \begin{scope}[shift={(4,0)}] % Shift this scope to the right
        % Your second TikZ figure code here
         % Define the nodes in a circular arrangement with a radius of 2cm
    \foreach \i in {0,...,6} {
        \pgfmathtruncatemacro{\j}{mod(\i+1,7)}
        \node[draw, circle, inner sep=2pt] (v\j) at ({360/7 * \i}:1.5cm) {};
        \node at ({360/7 * \i}:2cm) {$v_{\j}$};
    }

    % Draw the edges for the cycle
    \foreach \i in {0,...,5} {
        \pgfmathtruncatemacro{\next}{\i + 1}
        \draw (v\i) -- (v\next);
    }
    \draw (v6) -- (v0);

    % Define the outside node u close to v_4 and equally distant from v_3 and v_5
    \node[draw, circle, inner sep=2pt] (u) at (-2.4, 1) {};
    \node[above=2pt] at (u.north) {$u$};

    % Draw the edges from u to v3 and v5
    \draw (u) -- (v3);
    \draw (u) -- (v5);

     % Caption and label for the second figure
        %\node[below=3cm, align=center] {Figure 2: Second figure\\ \label{fig:figure2}};
    \end{scope}

\end{tikzpicture}
\caption{The graph $CC_{22}$ on the left and $EC_7$ on the right.}
        \label{fig:CC22}
\end{figure}
 
Observing that $\#TDDP$ is at most the number of vertices divided by one plus the diameter, the following upper bound on $\#TDDP$ applies to all graphs. 

\begin{remark}\label{rem:general}
For any graph $G$ with $n$ vertices and diameter $D$, we have $\#TDDP(G)\leq n/(D+1)$.
\end{remark}

The hypercube of dimension $n$, denoted by $Q_n$, has its vertex set representing distinct binary strings of length $n$. Each vertex shares an edge with $n$ other vertices, which are those that differ in one bit value. 

\begin{corollary}\label{cor:hypercube}
Let $Q_n$ be a hypercube of dimension $n$, then we have $\#TDDP(Q_n) \leq 2^n/(n+1)$.
\end{corollary}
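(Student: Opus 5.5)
The plan is to apply Remark~\ref{rem:general} directly. To do so we need only two facts about $Q_n$: its number of vertices and its diameter.

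The vertex set of $Q_n$ is the set of binary strings of length $n$, so $Q_n$ has $2^n$ vertices. For the diameter, we show that the distance between two vertices $x,y$ equals their Hamming distance $h(x,y)$, the number of coordinates in which they differ.

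\begin{itemize}
\item[] \emph{Lower bound.} Each edge changes exactly one bit, so it changes the Hamming distance to $y$ by exactly one. Hence every path from $x$ to $y$ has length at least $h(x,y)$.
\item[] \emph{Upper bound.} Flipping the differing bits one at a time produces a path from $x$ to $y$ of length exactly $h(x,y)$.
\end{itemize}

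It follows that the distance between $x$ and $y$ is $h(x,y)$. The maximum of $h(x,y)$ is $n$, attained for example by the all-zeros and all-ones strings. Therefore the diameter of $Q_n$ is $D=n$.

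Substituting $|V(Q_n)|=2^n$ and $D=n$ into Remark~\ref{rem:general} gives $\#TDDP(Q_n)\le 2^n/(n+1)$, which is the claim.

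The underlying counting is the same as in the remark: each diametral path contains $n+1$ vertices, and totally disjoint paths use disjoint vertex sets. No step is a real obstacle; the only point that needs care is the lower bound on distance, and the parity argument above settles it.
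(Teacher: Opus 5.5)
Your proposal is correct and follows the paper's proof: count the $2^n$ vertices, establish that the diameter is $n$, and apply Remark~\ref{rem:general}. The only difference is that you justify $\mathrm{diam}(Q_n)=n$ through the Hamming distance, which the paper simply asserts; your lower-bound step is sound, though it rests on each edge changing the Hamming distance by exactly one rather than on parity.
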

\begin{proof}
The number of vertices in $Q_n$ is $2^n$. Each vertex has a unique diametral partner, which is the vertex having all differing bit values. As such, the diameter of $Q_n$ is $n$, and each diametral path uses $n+1$ vertices. By Remark \ref{rem:general}, we have $\#TDDP(Q_n) \leq 2^n/(n+1)$.
\end{proof}
We have the following exact values of $\#TDDP(Q_n)$ for small $n\leq 6$ where the optimality follows from the upper bound in Corollary \ref{cor:hypercube}. We observe that this upper bound is tight for $n\leq 6$ except $n=4$. Finding an efficient way to determine $\#TDDP(Q_n)$ for larger $n$ or proving that the bound given above is tight for all $n$ other than $4$ are open problems.

\begin{itemize}
 
		\item $\#TDDP(Q_2)=1$, as $(00, 01, 11)$ uses 3 vertices. 
  
		\item $\#TDDP(Q_3)=2$, as (101, 001, 011, 010) and (000, 100, 110, 111) use all eight vertices in the two diametral paths.
  
		\item $\#TDDP(Q_4) =2$ with (0000, 1000, 1100, 1110, 1111) and (0100, 0101, 0001, 1001, 1011) consists of two totally disjoint diametral paths. By an exhaustive search, we find that there are no sets of three totally disjoint diametral paths in $Q_4$.

        \item $\#TDDP(Q_5)=5$ with the following five totally disjoint diametral paths: (00000, 00001, 00011, 00111, 01111, 11111); (00010, 00110, 00100, 00101, 01101, 11101); (01000, 01001, 01011, 11011, 10011, 10111); (01010, 11010, 10010, 10000, 10100, 10101);(01110, 01100, 11100, 11000, 11001, 10001)

        \item $\#TDDP(Q_6)=9$ with the following nine totally disjoint diametral paths:\\
        (000100, 000000, 000010, 010010, 110010, 111010, 111011);\\
        (010000, 010100, 110100, 110101, 110111, 111111, 101111);\\
        (010101, 010001, 000001, 000011, 001011, 101011, 101010);\\
        (011001, 011000, 011100, 011110, 010110, 000110, 100110);\\
        (011011, 011010, 001010, 001110, 101110, 101100, 100100);\\
        (100010, 100011, 110011, 010011, 010111, 011111, 011101);\\
        (110000, 100000, 101000, 001000, 001100, 001101, 001111);\\
        (110110, 111110, 111100, 111101, 101101, 101001, 001001);\\
        (111000, 111001, 110001, 100001, 100101, 100111, 000111)
    %\item \red{$15\leq \#TDDP(Q_7) \leq 16$ as we can give 15 verified totally disjoint diametral paths and the upper bound 16 follows from Corollary \ref{cor:hypercube}. WE CAN REMOVE THIS LAST ITEM}
        
\end{itemize}

\subsection{2-path Graphs}
We show that we can efficiently determine $\#TDDP$ for a subclass of maximal outerplanar graphs, called 2-paths. A graph is \textit{outerplanar} if it can be drawn on the plane so that no edges are crossing, and all its vertices lie on the exterior face.  A \textit{maximal outerplanar graph (MOP)} is an outerplanar graph such that adding a new edge would destroy outerplanarity. A maximal outerplanar graph can be seen as a fully triangulated cycle. A MOP $M$ can be constructed by starting from a triangle graph where all edges are \emph{exterior edges} (i.e., lie on the exterior face) and successively adding a new vertex connected by edges to the two end vertices of an exterior edge $e$.  The newly added edges become exterior edges and the edge $e$ becomes an \emph{interior edge}. In a MOP, each interior edge $e$ separates the MOP into two MOP subgraphs, each one including $e$ as an exterior edge.

The class of \textit{2-paths} is a subclass of MOPs having only two vertices of degree 2. A 2-path can be constructed as follows. Start from a MOP on $4$ vertices, which is uniquely defined as a 4-cycle with a diagonal chord; let the two vertices of degree 2 be called $v_l$ and $v_r$, respectively. Without loss of generality, we will consider a 2-path to be constructed from left to right, always replacing the right degree 2 vertex $v_r$ of the graph.  Add a new vertex $u$ connected to $v_r$ and one of its neighbors $w$. The new vertex $u$ replaces $v_r$ as the rightmost degree 2 vertex. 

Let $G$ be a 2-path and consider its left to right construction order.  After a fifth vertex is added to $G$, there remains one vertex of degree 3 that is adjacent to $v_l$ that we denote by $v’_l$.  These two vertices will not be involved in later additions of vertices; $v_l$ and $v’_l$ are called the \textit{left vertices} of the graph.  Similarly, we define the \textit{right vertices} $v_r$ and $v’_r$, where $v’_r$ is the unique neighbor of $v_r$ having degree $3$ (see Figure \ref{fig:2path}). 

\begin{lemma} \label{lem:pair}
Let $G$ be a 2-path with $5$ or more vertices. The pair $[v_l,v_r]$ is a diametral pair of $G$.
\end{lemma}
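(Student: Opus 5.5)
We would argue by induction on the number $n\ge 5$ of vertices, following the left-to-right construction order of the 2-path. The key observation is that deleting the rightmost degree-2 vertex $v_r$ of $G$ (for $n\ge 6$) leaves a 2-path $G_0$ with $n-1\ge 5$ vertices. Moreover, since the two neighbours $a,b$ of $v_r$ are adjacent, no shortest path between two vertices of $G_0$ needs to pass through $v_r$. Hence $d_G(x,y)=d_{G_0}(x,y)$ for all $x,y\in V(G_0)$. The only new distances are those to $v_r$:
$$d_G(y,v_r)=1+\min\{d(y,a),d(y,b)\}=1+d(y,\{a,b\}).$$
The left vertex $v_l$ is unchanged, and the right degree-2 vertex of $G_0$ is one of $a,b$; call it $x$.

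Plain induction on the statement is not strong enough, so we strengthen the hypothesis. Let $\{a,b\}$ be the exterior edge at the right end of the current 2-path to which the next vertex would be attached, namely $v_r$ together with its neighbour that will receive the new vertex. The invariant is:
\begin{itemize}
\item[(I1)] $[v_l,v_r]$ is diametral;
\item[(I2)] for every vertex $y$, $d(y,\{a,b\})\le d(v_l,\{a,b\})$.
\end{itemize}
The base case $n=5$ is checked directly on the (unique up to the choice of $w$) two 5-vertex 2-paths, which is a finite verification.

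For the inductive step, assume the invariant for $G_0$, with right edge $\{a,b\}$ and $x\in\{a,b\}$ diametral with $v_l$, say $d(v_l,x)=D_0$. Two facts give that $[v_l,v_r]$ is diametral in $G$:
\begin{itemize}
\item Since $a$ and $b$ are adjacent, $|d(v_l,a)-d(v_l,b)|\le 1$. Therefore $d(v_l,v_r)=1+\min\{d(v_l,a),d(v_l,b)\}\ge \max\{d(v_l,a),d(v_l,b)\}\ge D_0$, so $d(v_l,v_r)$ is at least every old distance.
\item By (I2), for every $y$ we have $d(y,v_r)=1+d(y,\{a,b\})\le 1+d(v_l,\{a,b\})=d(v_l,v_r)$, so no new distance exceeds $d(v_l,v_r)$.
\end{itemize}

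The main obstacle is re-establishing (I2) for the new right edge $\{v_r,c\}$ with $c\in\{a,b\}$. Here $d(v_l,\{v_r,c\})=\min\{d(v_l,c),\,d(v_l,v_r)\}$, and a vertex $y$ near the right end could a priori be closer to $c$ than $v_l$ is, yet still farther from $v_r$. The first approach we would try uses the separator structure of the 2-path. Every interior edge $e_i$, listed left to right, separates $v_l$ from $v_r$, so the distance from $v_l$ to a vertex right of $e_i$ equals $d(v_l,e_i)$ plus the distance from $e_i$. With this, we would prove the cleaner invariant that $d(v_l,\cdot)$ is non-decreasing along the construction order and that, for each vertex $y$, $d(y,e)\le d(v_l,e)$ holds for every exterior edge $e$ on the right side. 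If that fails in general, the fallback is to track the pair $(d(v_l,a),d(v_l,b))$, which differs by at most one. We would then split into the cases $d(v_l,c)<d(v_l,v_r)$ and $d(v_l,c)=d(v_l,v_r)$, and use the separator $\{a,b\}$ to compare $d(y,c)$ with $d(v_l,c)$.
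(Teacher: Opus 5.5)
\textbf{There is a genuine gap: the proposal never proves that (I2) is preserved, which is the step that closes the induction.}

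Several parts of your argument are correct. Deleting $v_r$ does not change distances in $G_0$, and $d(y,v_r)=1+d(y,\{a,b\})$. The derivation of (I1) for $G$ from (I1) and (I2) for $G_0$ is also sound. A strengthening like (I2) is genuinely needed: the paper's own argument proves only your first bullet, $d(v_l,u)\ge$ the old diameter, and never bounds $d(y,u)$ for other vertices $y$.

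But you stop exactly at the new right edge $\{v_r,c\}$, offering only a ``first approach'' and a ``fallback''. So the induction is not established. The first approach also rests on a false claim. A separating interior edge $e$ gives only $d(v_l,e)+d(e,z)\le d(v_l,z)\le d(v_l,e)+d(e,z)+1$, not equality. For example, take the 2-path with triangles $v_lt_1b_1$, $t_1b_1t_2$, $b_1t_2b_2$, $t_2b_2t_3$. Its interior edge $e=\{t_2,b_1\}$ has $d(v_l,e)=1$ and $d(e,t_3)=1$, yet $d(v_l,t_3)=3$.

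\textbf{The missing observation.} Let $a,b$ be the two adjacent neighbours of $v_r$ and let $c\in\{a,b\}$. For every $y\ne v_r$ we have $d(y,c)\le 1+\min\{d(y,a),d(y,b)\}=d(y,v_r)$. Hence $d(y,\{v_r,c\})=d(y,c)$ and $d(v_l,\{v_r,c\})=d(v_l,c)$. So (I2) just says that $v_l$ is a farthest vertex from $c$.

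The same reduction applies in $G_0$, where $x$'s two neighbours are adjacent. There, (I2) says that $v_l$ is farthest from the endpoint of $\{a,b\}$ other than $x$.

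Now distances among vertices of $G_0$ are unchanged in $G$, and $d(v_r,c)=1$. Therefore (I2) for $G$ with $c=x$ is exactly (I1) for $G_0$. With $c$ the other endpoint, it is exactly (I2) for $G_0$. Your worry about a vertex $y$ that is closer to $c$ than $v_l$ is, yet farther from $v_r$, cannot arise.

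Adding these few lines, together with the trivial base case (which already holds for the 4-vertex 2-path), completes your proof.

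\textbf{An alternative without induction.} The separator fact you cite also gives a direct proof if used only with inequalities. List the interior edges left to right as $e_1,\dots,e_m$. Then $d(v_l,v_r)=2+d(e_1,e_m)$. Any shortest $e_1$--$e_m$ path meets every $e_i$, so for $x\in e_i$ and $y\in e_j$ we get $d(x,y)\le 2+d(e_i,e_j)\le 2+d(e_1,e_m)$. Pairs involving $v_l$ or $v_r$ are handled the same way.
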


\begin{proof}
We show by induction that the pair $[v_l,v_r]$ is a diametral pair of $G$. It is true in the 2-path with 5 vertices. When a new vertex $u$ is added adjacent to $v_r$ and a vertex $w$, either there exists a diametral path from $v_r$ to $v_l$  through $w$, implying that $u$ is at the same distance from $v_l$ as $v_r$; or, if a diametral path does not pass through $w$, $u$ is at a greater distance than $v_r$. In either case, $[u, v_l]$ is a diametral pair with new vertex $v$ being the new $v_r$.
\end{proof}

A 2-path with diameter 2 is called a \textit{fan}. The graph on the left in Figure \ref{fig:2path} shows a fan with 6 vertices and having two totally disjoint diametral paths.

\begin{theorem}\label{thm:D2}
Given a 2-path $G$ with diameter 2 on $n$ vertices, $\#TDDP(G)=\lfloor(n/3)\rfloor$.  
\end{theorem}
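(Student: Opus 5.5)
The plan is to first pin down the structure of a 2-path of diameter~2, namely that it is a fan. Then the upper bound will come from Remark~\ref{rem:general}, and the matching lower bound from an explicit packing of diametral paths.

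\emph{Structure.} Let $G$ be a 2-path of diameter $2$ on $n$ vertices. I claim that some vertex $c$ is adjacent to all other vertices, and that $G-c$ is a path $(p_1,\ldots,p_{n-1})$ with $p_1=v_l$ and $p_{n-1}=v_r$.

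First, the triangular faces of a 2-path are ordered left to right, $T_1,\ldots,T_{n-2}$, following the construction, and consecutive triangles share an interior edge. In this order the triangles containing any fixed vertex form a contiguous block. This holds because a vertex, once it is no longer the current right vertex or its partner, is never used again in the construction.

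By Lemma~\ref{lem:pair}, $[v_l,v_r]$ is a diametral pair, so $d(v_l,v_r)=2$ and $v_l,v_r$ have a common neighbour $c$. Since $v_l$ and $v_r$ have degree~$2$, each lies in exactly one triangle, namely $T_1$ and $T_{n-2}$ respectively. Their neighbours are exactly the other vertices of those triangles, so $c$ belongs to both $T_1$ and $T_{n-2}$. By contiguity, $c$ belongs to every triangle, hence $c$ is adjacent to every vertex.

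The outer cycle with $c$ removed is then a Hamiltonian path $p_1,\ldots,p_{n-1}$ of $G-c$. There are no further chords: every triangle has the form $c\,p_ip_{i+1}$, so $p_i,p_j$ are adjacent iff $|i-j|=1$. Consequently the diametral pairs are exactly the pairs $p_i,p_j$ with $|i-j|\ge 2$, and every diametral path has three vertices. Note that $n\ge 5$ here, since the 2-path on 4 vertices also has diameter~2 but is a small case I will check by hand (it gives $1=\lfloor 4/3\rfloor$).

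\emph{Upper bound.} By Remark~\ref{rem:general} with $D=2$, $\#TDDP(G)\le n/3$. Since the left side is an integer, $\#TDDP(G)\le\lfloor n/3\rfloor$.

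\emph{Lower bound.} For $i\ge 0$, take the consecutive triples $(p_{3i+1},p_{3i+2},p_{3i+3})$ along the path. Each is a shortest path between two nonadjacent vertices, hence diametral. These give $\lfloor (n-1)/3\rfloor$ totally disjoint diametral paths, which already equals $\lfloor n/3\rfloor$ unless $n\equiv 0 \pmod 3$.

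In the remaining case, $n-1\equiv 2\pmod 3$ and $n\ge 6$. Here I use triples only up to $p_{n-6}$, which leaves the last five path vertices $p_{n-5},\ldots,p_{n-1}$ together with $c$. I pack these six vertices into two paths:
\[
(p_{n-4},p_{n-3},p_{n-2}) \quad\text{and}\quad (p_{n-5},c,p_{n-1}).
\]
The second path is diametral because $p_{n-5}$ and $p_{n-1}$ are nonadjacent. This gives $(n-6)/3+2=n/3$ paths.

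The main obstacle is the structural step, specifically justifying contiguity of the triangles containing a given vertex. Once the fan structure is established, both bounds are routine.
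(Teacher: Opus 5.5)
Your proof is correct and follows the paper's route: you find the universal common neighbour $c$ of $v_l$ and $v_r$, pack consecutive triples along exterior edges, and get $\lfloor n/3\rfloor$ as the upper bound from Remark~\ref{rem:general}; where the paper only asserts that $c$ is universal, you justify it via contiguity of the triangles. The paper also avoids your case split on $n \bmod 3$ by always taking $(v_l,c,v_r)$ first and then triples along $p_2,\ldots,p_{n-2}$, which gives $1+\lfloor (n-3)/3\rfloor=\lfloor n/3\rfloor$ uniformly.
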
 
\begin{proof}
Since the diameter $D=2$, the two degree 2 vertices $v_l$ and $v_r$ have a common neighbor, say $u$, where $(v_l,u)$ and $(v_r,u)$ are external edges. Vertex $u$ is a universal vertex.
One diametral path of $G$ is from $v_l$ to $v_r$ through vertex $u$.  The other totally disjoint diametral paths can be realized as three consecutive, previously unselected vertices connected by two exterior edges of $G$.  
\end{proof}

As we construct a 2-path that has diameter greater than $2$, there comes a point when a new vertex $v$ is connected to an edge $e_b$ that has both end vertices at distance $2$ from $v_l$; the edge $e_b$ becomes an interior edge as vertex $v$, now $v_r$ at distance $3$ from $v_l$, is added to the 2-path. We call edge $e_b$ the \textit{barrier edge} of $G$ as it separates left and right parts of the graph. See the graph on the right in Figure \ref{fig:2path}, which depicts a 2-path $G$ with diameter $4$ where $\#TDDP(G)=2$, and $e_b$ is shown by a bold edge.

\begin{theorem}\label{thm:barrier}
 Every diametral path of a 2-path graph having diameter greater than 2 includes an end vertex of the barrier edge.
\end{theorem}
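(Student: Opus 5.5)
Write $e_b=(x,y)$. The plan is to show that $\{x,y\}$ separates the 2-path $G$ into a left part $L$ and a right part $R$, and then argue that every diametral pair has one vertex on each side. A shortest path between such a pair must then cross the separator, which gives the statement.

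\emph{Step 1 (separation).} When $v$ is attached to $e_b$, the edge $e_b$ becomes interior. As noted above for MOPs, an interior edge splits the graph into two MOP subgraphs sharing $e_b$. Let $L$ be the vertices built before $v$, other than $x,y$. Every later vertex is added on the right, so it lies on the side containing $v$; call these vertices $R$. Thus every path from a vertex of $L$ to a vertex of $R$ passes through $x$ or $y$.

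\emph{Step 2 (left part is small).} At the moment $v$ is added, every vertex already present is within distance $2$ of $v_l$. Indeed, $3$ first becomes a distance from $v_l$ exactly at $v$, and adding vertices on the right never shortens distances among earlier vertices in a way that matters here. Moreover, $d(v_l,x)=d(v_l,y)=2$. I would use the fan-like structure of the left part to show that every vertex of $L\cup\{x,y\}$ is within distance $2$ of both $x$ and $y$. Some care is needed: the left part is a fan-like 2-path of diameter at most $2$, possibly up to one extra vertex. A vertex of $L$ farther from $x$ than that is adjacent to $v_l$ or is $v_l$ itself. For $v_l$ we have distance $2$ to both ends by definition.

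\emph{Step 3 (endpoints must straddle).} Let $D\ge 3$ be the diameter and $[p,q]$ a diametral pair. First suppose both $p,q\in L\cup\{x,y\}$. Then $d(p,q)\le d(p,v_l)+\dots$; more directly, all these vertices lie within the left sub-MOP, which has diameter $\le 2$. Distances inside $G$ are no larger than inside the sub-MOP, so $d(p,q)\le 2<D$, which is impossible. Next suppose both $p,q\in R\cup\{x,y\}$ and some shortest $p$–$q$ path avoids $x$ and $y$. Then that path uses only vertices of $R$. Here I would compare with $v_l$. By Step 2, $d(v_l,q)=2+\min(d(x,q),d(y,q))$, because any path from $v_l$ to $q$ enters $R$ through $x$ or $y$. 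The main obstacle is to show $d(p,q)<d(v_l,q)$ whenever $p\in R$. I would attack this inductively along the right-to-left construction, in the spirit of Lemma~\ref{lem:pair}. In a 2-path the vertices of $R$ form a strip whose ``levels'' from $e_b$ advance monotonically. So for $p,q\in R$ the distance $d(p,q)$ is essentially the difference of their levels plus at most one. By contrast, $d(v_l,q)$ is the level of $q$ plus $2$. This gives $d(p,q)\le d(v_l,q)-1$, so $p$ and $q$ cannot both lie in $R$. Hence one endpoint is in $L\cup\{x,y\}$ and the other in $R\cup\{x,y\}$.

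\emph{Step 4 (conclusion).} If an endpoint is $x$ or $y$, we are done. Otherwise one endpoint is in $L$ and the other in $R$, and by Step 1 every path between them, in particular every diametral path, contains $x$ or $y$. I expect Step 3's level argument on the right part to be the delicate point. The comparison must handle the parity and one-off effects of which neighbor ($v_r$ or $v'_r$) each new vertex attaches to.
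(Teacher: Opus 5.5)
Your decomposition is the same as the paper's. Vertices on one side of $e_b$ are too close to one another to form a diametral pair, so every diametral pair straddles $e_b$, and because $\{x,y\}$ separates, every path between such a pair meets $x$ or $y$. The paper states this inductively: the distance from $v_r$ to $v_l,v'_l$ dominates every same-side distance. Steps~1, 2, 4 and the left-side case of Step~3 are fine. The cleanest justification there is Lemma~\ref{lem:pair} applied to the prefix built before $v$: its right degree-$2$ vertex is an end of $e_b$, so the prefix has diameter $2$.

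The gap is the right-side case of Step~3. This is the real content of the theorem, and you leave it as an unproved heuristic that is incorrect as stated.
\begin{itemize}
\item The claim ``$d(p,q)<d(v_l,q)$ whenever $p\in R$'' needs $q$ to be the later of the two vertices. For $q=v$ and $p=v_r$ you have $d(v_l,q)=3$ but $d(p,q)\ge D-3$, which is at least $3$ once $D\ge 6$.
\item The level bound ``difference of levels plus at most one'' is false. Attach $v$ to $\{x,y\}$, then $w$ to $\{v,x\}$, then $w'$ to $\{w,x\}$. All three vertices have level $1$ from $e_b$, yet $d(v,w')=2$.
\end{itemize}
So the inductive level argument you sketch would not go through in the form you state it.

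The repair is short and needs no induction. Let $p\in R$, let $q$ be added after $p$, and let $c=\{p,p'\}$ be the edge to which the vertex following $p$ is attached, with $p'$ a neighbor of $p$. Every later vertex, in particular $q$, is separated from $v_l$ by $c$. Hence
\[
d(v_l,q)\ \ge\ d(q,c)+\min_{w\in c}d(v_l,w)\ \ge\ d(q,c)+d(v_l,p)-1,
\]
while $d(p,q)\le 1+d(q,c)$. Therefore
\[
d(p,q)\ \le\ d(v_l,q)+2-d(v_l,p)\ \le\ d(v_l,q)-1\ <\ D,
\]
because every vertex of $R$ is at distance at least $3$ from $v_l$.

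In level terms, with $\lambda(u)=d(u,\{x,y\})$, the correct bound is $d(p,q)\le \lambda(q)-\lambda(p)+2$ for $q$ later than $p$. The slack $\lambda(p)\ge 1$ you already had is exactly what absorbs the extra unit. With this inequality, two vertices of $R$ are never diametral, and the rest of your argument goes through.
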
 

\begin{proof}
The statement is true when the barrier edge is established.  As we add a new vertex $v_r$ to the right end of the graph, the distance from $v_r$ to the left vertices, $v_l$ or $v_l'$, stays the same or grows. This distance will be greater than the distance between any pair of vertices on one side of the barrier edge.
\end{proof}
 Therefore, we have the following corollary.
\begin{corollary}\label{cor:2path}
Given a 2-path graph $G$ with diameter greater than $2$, $\#TDDP(G) \leq 2$.
\end{corollary}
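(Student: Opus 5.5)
The corollary follows from Theorem \ref{thm:barrier} by a pigeonhole argument. Let $G$ be a 2-path with diameter greater than $2$, and let $e_b=(x,y)$ be its barrier edge. By Theorem \ref{thm:barrier}, every diametral path of $G$ contains $x$ or $y$ (or both). Now take any set $\mathcal{P}$ of totally disjoint diametral paths in $G$. Because the paths in $\mathcal{P}$ pairwise share no vertex, at most one of them contains $x$ and at most one contains $y$. Every path in $\mathcal{P}$ must contain at least one of these two vertices, so we can map each path to a vertex of $\{x,y\}$ that it contains. This map is injective, since two paths mapped to the same vertex would share it. Hence $|\mathcal{P}|\le 2$, and since $\mathcal{P}$ was arbitrary, $\#TDDP(G)\le 2$.

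Two points need checking before this argument is complete.

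First, the barrier edge must exist and be unique whenever the diameter exceeds $2$. During the left-to-right construction, the distance from $v_l$ to the current $v_r$ increases by at most one with each added vertex (this is the argument of Lemma \ref{lem:pair}). So the first moment the diameter reaches $3$ is exactly when a new vertex is attached to an edge whose two ends are both at distance $2$ from $v_l$. That edge is $e_b$, and it is well defined.

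Second, the statement of Theorem \ref{thm:barrier} must cover every diametral path, not only those between the pair $[v_l,v_r]$. I take this as given, since the theorem is stated for every diametral path.

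No further obstacle is expected, because the corollary is a direct counting consequence of Theorem \ref{thm:barrier}. I would also note that the bound is attained, for example by the diameter-$4$ 2-path in Figure \ref{fig:2path}. This shows that $2$ is the best possible constant, though it is not needed for the inequality itself.
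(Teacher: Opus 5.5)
Your argument is correct and matches the paper's proof. The paper states the corollary as an immediate consequence of Theorem~\ref{thm:barrier}: every diametral path contains an endpoint of $e_b$, and totally disjoint paths cannot share an endpoint, so there are at most two. Your check that $e_b$ is well defined (the distance to $v_l$ grows by at most one per added vertex, so the first vertex at distance $3$ attaches to an edge whose ends are both at distance $2$) is correct; the paper leaves it implicit.
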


\begin{figure}[htb]
    \centering
    \begin{tikzpicture}

   \begin{scope}[shift={(-4,0)}]
    %Fan
  % Nodes
    \node[draw, circle, inner sep=2pt] (p10) at (0,1.6) {};
    \node[left=2pt] at (p10.north) {$v'_l$};

    \node[draw, circle, inner sep=2pt] (p11) at (1.5,1.6) {};

    \node[draw, circle, inner sep=2pt] (p12) at (3,1.6) {};
     \node[right=2pt] at (p12.north) {$v'_r$};

    \node[draw, circle, fill, inner sep=2pt] (p20) at (0,0) {};
     \node[left=2pt] at (p20.north) {$v_l$};
    
    \node[draw, circle, fill, inner sep=2pt] (p21) at (1.5,0) {};

    \node[draw, circle, fill, inner sep=2pt] (p22) at (3,0) {};
    \node[right=2pt] at (p22.north) {$v_r$};

   %Edges
    \draw (p10) -- (p11) -- (p12);
    \draw (p20) -- (p21) -- (p22);
    \draw (p20) -- (p10);
    \draw (p21) -- (p10);
    \draw (p21) -- (p11);
    \draw (p21) -- (p12);
    \draw (p22) -- (p12);
    
   \end{scope}

   \begin{scope}[shift={(3,0)}]
    
    % Nodes
    \node[draw, circle, inner sep=2pt] (p10) at (0,1.6) {};
    \node[left=2pt] at (p10.north) {$v_l$};

    \node[draw, circle, inner sep=2pt] (p11) at (1.5,1.6) {};

    \node[draw, circle, inner sep=2pt] (p12) at (3,1.6) {};

    \node[draw, circle, inner sep=2pt] (p13) at (4.5,1.6) {};

    \node[draw, circle, inner sep=2pt] (p14) at (6,1.6) {};
    \node[right=2pt] at (p14.north) {$v_r$};

    \node[draw, circle, fill, inner sep=2pt] (p20) at (0,0) {};
    \node[left=2pt] at (p20.west) {$v'_l$};

    \node[draw, circle, fill, inner sep=2pt] (p21) at (1.5,0) {};

    \node[draw, circle, fill, inner sep=2pt] (p22) at (3,0) {};

    \node[draw, circle, fill, inner sep=2pt] (p23) at (4.5,0) {};

    \node[draw, circle, fill, inner sep=2pt] (p24) at (6,0) {};
    \node[right=2pt] at (p24.north) {$v'_r$};
    \node[right=1.7pt] at (3,1) {$e_b$};

    % Horizontal Edges
    \draw (p10) -- (p11) -- (p12) -- (p13) -- (p14);
    \draw (p20) -- (p21) -- (p22) -- (p23) -- (p24);

    % Vertical and Diagonal Edges
    
    \draw (p10) -- (p20);
    \draw (p11) -- (p21);
    \draw[very thick] (p12) -- (p22);
    \draw (p13) -- (p23);
    \draw (p14) -- (p24);

    \draw (p11) -- (p20);
    \draw (p11) -- (p22);
    \draw (p13) -- (p22);
    \draw (p13) -- (p24);
    \end{scope}
\end{tikzpicture}
    \caption{A fan with 6 vertices in the left and a 2-path $G$ with diameter 4 in the right. Both graphs have $\#TDDP(G)=2$ with one diametral path formed by black vertices and the other by white vertices.}
        \label{fig:2path}
\end{figure}

\begin{lemma}\label{lem:ext}
For a 2-path graph $G$ with diameter greater than 2, if $\#TDDP(G)=2$, then the two totally disjoint diametral paths include only exterior edges of $G$.
\end{lemma}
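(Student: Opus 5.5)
The plan is to exploit the strip structure of a 2-path. The exterior cycle of $G$ consists of two ``sides'': an upper path and a lower path, both running from the left vertices to the right vertices. Every interior edge joins a vertex of the upper side to a vertex of the lower side. The left pair $v_l,v'_l$ is joined by an exterior edge, and so is the right pair $v_r,v'_r$.

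By Theorem~\ref{thm:barrier}, each of the two totally disjoint diametral paths $P_1,P_2$ contains an end vertex of the barrier edge $e_b=(x,y)$. Since the paths are totally disjoint, one path, say $P_1$, contains $x$ and avoids $y$, while $P_2$ contains $y$ and avoids $x$. We may take $x$ on one side of the exterior cycle and $y$ on the other.

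Next we show that the endpoints of the two paths are exactly $\{v_l,v'_l\}$ and $\{v_r,v'_r\}$, one on each side. By Lemma~\ref{lem:pair} and the argument of Theorem~\ref{thm:barrier}, any diametral pair must consist of a vertex far to the left of $e_b$ and a vertex far to the right. Distances across the barrier are at most one more from $v'_l$ than from $v_l$, and similarly on the right. Hence each path has one endpoint among the left vertices and one among the right vertices. One could try to make this rigorous by induction on the construction order, but I would instead argue directly that any vertex other than $v_l,v'_l$ lies strictly closer to all right-side vertices.

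With endpoints fixed, I would use a planarity/separation argument. Each interior edge $(p,q)$ separates $G$ into a left MOP and a right MOP. So every left-to-right path must meet $\{p,q\}$. Because two totally disjoint left-to-right paths must each meet $\{p,q\}$, each uses exactly one of $p,q$. Applying this to every interior edge, neither path can contain both ends of an interior edge, so in particular neither path uses an interior edge.

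A path avoiding interior edges uses only exterior edges, and since it cannot use both left vertices or both right vertices (the other path needs one), it runs along one side of the exterior cycle. This gives the statement.

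The main obstacle is the endpoint step: showing that no diametral path of $G$ ends at a vertex other than the four extreme vertices. A diametral pair might a priori involve, say, the neighbour of $v_l$ on the upper side when the fan at the left end is degenerate. If that step is hard, I would bypass it. The separation argument only needs each path to cross every interior edge's separator. That holds as long as each path has an endpoint on each side of every interior edge, so I would instead prove that each path's endpoints lie on opposite sides of every interior edge. This uses the fact that the distance between two vertices in the same sub-MOP cut off by an interior edge is less than the diameter once $D>2$, which is the content of the last sentence in the proof of Theorem~\ref{thm:barrier}.
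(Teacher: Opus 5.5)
Your proposal has a genuine gap that cannot be repaired along the lines you suggest. The endpoint step is false, the fallback claim is false, and so is the conclusion you draw from them (that \emph{neither} of two given totally disjoint diametral paths uses an interior edge).

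\textbf{Counterexample.} Start from the right-hand 2-path of Figure~\ref{fig:2path}. Call its top row $x_0,\dots,x_4$ and its bottom row $y_0,\dots,y_4$. Its edges are $x_ix_{i+1}$ and $y_iy_{i+1}$ for $0\le i\le 3$, the rungs $x_iy_i$ for $0\le i\le 4$, and the diagonals $x_1y_0$, $x_1y_2$, $x_3y_2$, $x_3y_4$. Attach a new vertex $x_5$ to the exterior edge $x_3x_4$; this is a legal 2-path step, since $x_4=v_r$.
\begin{itemize}
\item Now $v_l=x_0$, $v_l'=y_0$, $v_r=x_5$ and $v_r'=x_4$.
\item The exterior cycle is $x_0,x_1,x_2,x_3,x_5,x_4,y_4,y_3,y_2,y_1,y_0$.
\item A BFS from every vertex gives $D=4$, and the diametral pairs are exactly $\{x_0,y_0\}\times\{x_4,y_4,x_5\}$. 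Every diametral path therefore has an endpoint in $\{x_0,y_0\}$, so $\#TDDP(G)\le 2$.
\item The paths $(x_0,x_1,x_2,x_3,x_4)$ and $(y_0,y_1,y_2,y_3,y_4)$ are diametral and totally disjoint, so $\#TDDP(G)=2$.
\item Yet $x_3x_4$ is a chord of the exterior cycle: it lies on the two triangles $x_3x_4y_4$ and $x_3x_4x_5$, so the first path uses an interior edge.
\end{itemize}

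\textbf{What this breaks.} The second path ends at $y_4\notin\{v_r,v_r'\}$, so your endpoint step fails. It fails even for the all-exterior pair obtained by replacing $x_4$ with $x_5$.

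Your fallback fails too. For the interior edge $x_3x_4$, the vertices $y_0$ and $y_4$ lie in the same sub-MOP, avoid $x_3$ and $x_4$, and are at distance $4=D$.

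The sentence you cite from the proof of Theorem~\ref{thm:barrier} concerns the barrier edge only, and it does not transfer to other interior edges. A fan at the end of the strip (here $x_3$ is adjacent to $y_4,x_4,x_5$) makes several consecutive boundary vertices equidistant from the left end. A diametral pair can then sit entirely on one side of a fan edge.

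Your separation step is logically fine. Its premise, that both paths cross every interior edge, is exactly what fails here.

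\textbf{Comparison with the paper.} The paper's proof is essentially your fallback. It asserts that after deleting an interior edge and its ends, neither side contains a diametral pair. The same example refutes that assertion, so the gap is shared.

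The most that can hold is an existential version: if $\#TDDP(G)=2$, then \emph{some} pair of totally disjoint diametral paths uses only exterior edges. In the example, rerouting the first path to end at $x_5$ achieves this. Proving that version would need an exchange argument that reroutes path ends around the end fans, not a separation argument.
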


\begin{proof}
Assume that a diametral path contains an interior edge $e$.  Edge $e$ separates $G$ into two subgraphs. After removing edge $e$ and its end vertices, neither subgraph can contain a pair of vertices at a distance equal to the diameter of $G$. It follows that if $\#TDDP(G)=2$, then the two totally disjoint diametral paths include only exterior edges of $G$.
\end{proof}

\begin{figure}[htb]
    \centering
    \begin{tikzpicture}

   \begin{scope}[shift={(-4,0)}]
    
    % Nodes
    \node[draw, circle, inner sep=2pt] (p10) at (0,1.6) {};
    \node[left=2pt] at (p10.north) {$v'_l$};

    \node[draw, circle, inner sep=2pt] (p11) at (1.5,1.6) {};

    \node[draw, circle, inner sep=2pt] (p12) at (3,1.6) {};

    \node[draw, circle, inner sep=2pt] (p13) at (4.5,1.6) {};
        \node[right=2pt] at (p13.north) {$v_r$};

    \node[draw, circle, fill, inner sep=2pt] (p20) at (0,0) {};
    \node[left=2pt] at (p20.west) {$v_l$};

    \node[draw, circle, fill, inner sep=2pt] (p21) at (1.5,0) {};

    \node[draw, circle, fill, inner sep=2pt] (p22) at (3,0) {};

    \node[draw, circle, fill, inner sep=2pt] (p23) at (4.5,0) {};
    \node[right=2pt] at (p23.north) {$v'_r$};

    % Horizontal Edges
    \draw (p10) -- (p11) -- (p12) -- (p13);
    \draw (p20) -- (p21) -- (p22) -- (p23);

    % Vertical and Diagonal Edges
    
    \draw (p10) -- (p20);
    \draw (p11) -- (p21);
    \draw (p12) -- (p22);
    \draw (p12) [very thick] -- (p13);
    \draw (p20) [very thick] -- (p21);
    \draw (p13) -- (p23);

    \draw (p10) -- (p21);
    \draw (p12) [very thick]-- (p21);
    \draw (p12) -- (p23);
    
    \end{scope}

   \begin{scope}[shift={(3,0)}]
    
    % Nodes
    \node[draw, circle, inner sep=2pt] (p10) at (0,1.6) {};
    \node[left=2pt] at (p10.north) {$v_l$};

    \node[draw, circle, inner sep=2pt] (p11) at (1.5,1.6) {};

    \node[draw, circle, inner sep=2pt] (p12) at (3,1.6) {};

    \node[draw, circle, inner sep=2pt] (p14) at (4.5,1.6) {};
    \node[right=2pt] at (p14.north) {$v_r$};

    \node[draw, circle, inner sep=2pt] (p20) at (0,0) {};
    \node[left=2pt] at (p20.west) {$v'_l$};

    \node[draw, circle, inner sep=2pt] (p21) at (1.5,0) {};

    \node[draw, circle, inner sep=2pt] (p22) at (3,0) {};

    \node[draw, circle, inner sep=2pt] (p23) at (4.5,0) {};

    \node[draw, circle, inner sep=2pt] (p24) at (6,0) {};
    \node[right=2pt] at (p24.north) {$v'_r$};

    % Horizontal Edges
    \draw (p10) -- (p11) -- (p12) -- (p14);
    \draw (p20) -- (p21) -- (p22) -- (p23) -- (p24);

    % Vertical and Diagonal Edges
    \draw (p20) [very thick] -- (p21);
    \draw (p10) -- (p20);
    \draw (p12) [very thick] -- (p21);
    \draw (p12) -- (p22);
    \draw (p12) -- (p23);
    \draw (p12) [very thick] -- (p24);
    \draw (p14) -- (p24);

    \draw (p11) -- (p20);
    \draw (p11) -- (p21);
    
    \end{scope}
\end{tikzpicture}
    \caption{Various cases for diametral paths.}
        \label{fig:diametralpaths}
\end{figure}

The graph on the left in Figure \ref{fig:diametralpaths} shows that while $[v_l, v_r]$ is always a diametral pair, the two totally disjoint diametral
paths (one in black, the other in white vertices) using exterior edges may be between the pairs $[v'_l, v_r]$ and $[v_l,v'_r]$. The graph on the right in Figure \ref{fig:diametralpaths} shows a 2-path having diameter $3$ with $\#TDDP(G)=1$; in order for vertex $v_r'$ to reach a left vertex, it an interior, “shortcut” edge must be used.

We can use the results of the two lemmas above to derive an efficient algorithm determining $\#TDDP(G)$ for a 2-path $G$.

\begin{theorem}
Given a 2-path graph $G$, $\#TDDP(G)$ can be determined in time linear with the number of vertices in $G$.
\end{theorem}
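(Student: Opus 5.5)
We will describe a procedure that, given a 2-path $G$ on $n$ vertices (with its left-to-right construction order, or equivalently its outerplanar embedding, which can be recovered in linear time since a 2-path has exactly two degree-2 vertices and its weak dual is a path), decides which case of the preceding results applies and outputs $\#TDDP(G)$. The key point is that $G$ has $2n-3$ edges, so any computation consisting of a constant number of breadth-first searches runs in $O(n)$ time.

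The first step is to compute the diameter. By Lemma \ref{lem:pair}, for $n\ge 5$ the pair $[v_l,v_r]$ is diametral, so one BFS from $v_l$ gives $D=d(v_l,v_r)$. Graphs with $n\le 4$ are handled by inspection. If $D=2$, Theorem \ref{thm:D2} gives $\#TDDP(G)=\lfloor n/3\rfloor$ immediately. If $D\ge 3$, Corollary \ref{cor:2path} gives $\#TDDP(G)\in\{1,2\}$, and $\#TDDP(G)\ge 1$ trivially. It therefore remains to decide whether two totally disjoint diametral paths exist.

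For this we use Lemma \ref{lem:ext}: if two such paths exist, they use only exterior edges. The exterior face of $G$ is a Hamiltonian cycle $C$, and removing the vertices $v_l$ and $v_r$ splits $C$ into two boundary paths, a ``top'' path and a ``bottom'' path. Each of $v_l,v_r$ lies on both sides. We would argue that a diametral path consisting only of exterior edges is a subpath of $C$, and that two totally disjoint ones must separate the left vertices from the right vertices. By Theorem \ref{thm:barrier}, each must contain an end of the barrier edge, and these two ends lie on opposite boundary paths. Hence each candidate path is a contiguous arc of $C$ running from one of $v_l,v'_l$ to one of $v_r,v'_r$, one through the top and one through the bottom. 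This leaves a constant number of configurations: $\{[v_l,v_r],[v'_l,v'_r]\}$ and $\{[v'_l,v_r],[v_l,v'_r]\}$, each with two possible assignments to top and bottom. For each configuration we check, using BFS from $v_l$ and $v'_l$ (computing distances in $G$), that both endpoint pairs are at distance $D$, and that the corresponding boundary arcs have length exactly $D$. Length $D$ certifies that each arc is a shortest path, and the arcs are disjoint by construction. If some configuration passes, the answer is $2$; otherwise it is $1$. All of this takes $O(n)$ time.

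The main obstacle is the structural claim in the previous paragraph: that any pair of totally disjoint diametral paths, once restricted to exterior edges, must be exactly two complementary boundary arcs with endpoints among $\{v_l,v'_l\}$ and $\{v_r,v'_r\}$. I would prove it by contradiction using the barrier edge together with the monotonicity in the proof of Theorem \ref{thm:barrier}. If an endpoint were not among the left (resp. right) vertices, the path would be strictly shorter than $D$, because distances from left vertices grow only as vertices are added at the right end. This is also where Figure \ref{fig:diametralpaths} shows that the crossed pairing $[v'_l,v_r],[v_l,v'_r]$ must be included in the configurations.
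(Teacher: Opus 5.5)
Your reduction to deciding between $1$ and $2$ (via Lemma~\ref{lem:pair}, Theorem~\ref{thm:D2} and Corollary~\ref{cor:2path}) is fine. The gap is the structural claim you flag yourself, and that claim is false, not merely unproved. The endpoints of two totally disjoint diametral paths need not lie in $\{v_l,v'_l\}\cup\{v_r,v'_r\}$, so your test can answer $1$ when $\#TDDP(G)=2$.

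Take the 2-path on nine vertices with outer cycle $v_l,x_1,x_2,x_3,v_r,y_4,y_3,y_2,y_1$ (in this cyclic order) and chords $x_1y_1,x_2y_1,x_3y_1,x_3y_2,x_3y_3,x_3y_4$. It is built from the $4$-cycle $v_lx_1x_2y_1$ with chord $x_1y_1$ by attaching $x_3$ to $x_2y_1$, then $y_2$ to $x_3y_1$, $y_3$ to $x_3y_2$, $y_4$ to $x_3y_3$, and $v_r$ to $x_3y_4$. Here $v'_l=x_1$, $v'_r=y_4$, $D=3$, the barrier edge is $x_3y_2$, and the vertices at distance $3$ from $v_l$ are $y_3,y_4,v_r$. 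The paths $(v_l,y_1,y_2,y_3)$ and $(x_1,x_2,x_3,v_r)$ are totally disjoint diametral paths made of exterior edges, so $\#TDDP(G)=2$. Yet among the exterior arcs joining $\{v_l,x_1\}$ to $\{v_r,y_4\}$, only $(x_1,x_2,x_3,v_r)$ has length $3$. For each of the pairs $(v_l,y_4)$, $(v_l,v_r)$, $(x_1,y_4)$ the two arcs have lengths $4$ and $5$. So none of your configurations passes, and the procedure outputs $1$.

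The same graph shows that Lemma~\ref{lem:ext} can only be used in the weak form ``some optimal pair is exterior''. Indeed, $(x_1,x_2,x_3,y_4)$ and $(v_l,y_1,y_2,y_3)$ are also totally disjoint diametral paths, and the first uses the interior edge $x_3y_4$.

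Your sketched justification breaks at exactly this point. Monotonicity only says that distances from $v_l$ never decrease as vertices are added; it does not say that earlier vertices become strictly closer. Attaching $y_4$ to $x_3y_3$ with $d(v_l,x_3)=D-1$ leaves the previous rightmost vertex $y_3$ at distance $D$. More generally, a fan around a hub at distance $D-1$ produces a whole run of boundary vertices at distance $D$. Since a 2-path can be constructed from either end, reading the same graph from $v_r$ shows that endpoints near the starting end are not confined to the two left vertices either.

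The paper's proof makes the same restriction: it only walks exterior edges from $v_r$ and from $v'_r$. On this graph it finds $(v_r,x_3,x_2,x_1)$, rejects the length-$4$ arc from $y_4$ to $v_l$, and also answers $1$. So following the paper will not close the gap.

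A correct linear-time argument must let both endpoints of each path range over all vertices at distance $D$, and test the candidate pairs without running a BFS from every vertex. One usable tool: write $e_b=(z_1,z_2)$. The set $\{z_1,z_2\}$ separates the two sides of the barrier, so for $s$ on one side and $t$ on the other, $d(s,t)=\min\{d(s,z_1)+d(z_1,t),\,d(s,z_2)+d(z_2,t)\}$. Two BFS runs, from $z_1$ and from $z_2$, therefore supply all such distances at once.
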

\begin{proof}
As the graph is constructed, we associate with each new vertex its distance to $v_l$ and $v_l'$, being one more than the minimum distance of its two neighbors to that vertex in each case.  Each vertex is incident to two exterior edges when added, the exterior edges adjacent to a vertex can be readily updated as new vertices and edges are added. When the graph is complete, the distance from $v_r$ to $v_l$ is the diameter, as shown in Lemma \ref{lem:pair}. If the diameter equals $2$, then we identify diametral paths as indicated in the proof of Theorem \ref{thm:D2}. Otherwise, if the distance from $v_r'$ to $v_l'$ or to $v_l$ is less than the diameter, then $\#TDDP(G)=1$, as $[v_l,v_r]$ is the only diametral pair. Otherwise, from vertex $v_r$, traverse exterior edges in the direction opposite from $v_r'$ to determine if there is a diametral path of exterior edges to either $v_l$  or $v_l'$.  If not, then $\#TDDP(G)=1$, as every diametral path from $v_r$ includes an interior edge. If an exterior diametral path is found, then traverse exterior edges of the graph from $v_r'$ in direction away from $v_r$ to see if there is a diametral path to the other left vertex using only exterior edges.  If one is found, then $\#TDDP(G)=2$, else $\#TDDP(G)=1$.
\end{proof}

We have described above an efficient algorithm to compute $\#TDDP$ for the subclass of MOPs called 2-paths.  The properties and complexity of determining totally disjoint diametral paths in MOPs and in the more general graph class of 2-trees (i.e., where a new vertex can be connected by two edges to the ends of any existing edge) remain open problems.

\subsection{Threshold Graphs} 
Threshold graphs have structural properties that allow us to provide upper bounds on $\#TDDP$ or to compute it exactly in special cases. A \emph{threshold graph} is a subclass of split graphs that can be constructed by repeated applications of the following two operations: $(1)$ addition of a single isolated vertex; $(2)$ addition of a single dominating vertex, i.e., a single vertex that is connected to all other vertices of the (current) graph. Let $D$ be the set of dominating vertices and $I$ be the set of isolated (or independent) vertices; noting that the terms dominating and isolated refer to their property by the time they were added to the graph. Given a graph, it can be recognized whether it is a threshold graph or not in linear time. Moreover, if it is a threshold graph, we can determine in linear time the order in which vertices are added to construct it using the two operations described above \cite{threshold}.

 \begin{figure}[htb]
    \centering
\begin{tikzpicture}

\node at (-1.5, 1.3) {\large $D$};

\node[draw, dashed, rounded corners, minimum width=1.2cm, minimum height=6.1cm] at (-0.2,1.3) {};

% nodes

\node at (-0.4,4) {1};
\node[circle, draw, inner sep=2pt] (1) at (0, 4) {}; 

\node at (-0.4,3.4) {2};
\node[circle, draw, inner sep=2pt] (2) at (0, 3.4) {};

\node at (3.4,2.8) {3};
\node[circle, draw, inner sep=2pt] (3) at (3, 2.8) {};

\node at (3.4,2.2) {4};
\node[circle, draw, inner sep=2pt] (4) at (3, 2.2) {};

\node at (3.4,1.6) {5};
\node[circle, draw, inner sep=2pt] (5) at (3, 1.6) {};

\node at (-0.4,1) {6};
\node[circle, draw, inner sep=2pt] (6) at (0, 1) {};

\node at (-0.4,0.4) {7};
\node[circle, draw, inner sep=2pt] (7) at (0, 0.4) {};

\node at (3.4,-0.2) {8};
\node[circle, draw, inner sep=2pt] (8) at (3, -0.2) {};

\node at (3.4,-0.8) {9};
\node[circle, draw, inner sep=2pt] (9) at (3, -0.8) {};

\node at (-0.4,-1.4) {10};
\node[circle, draw, inner sep=2pt] (10) at (0,-1.4) {};

\node at (4, 1.3) {\large $I$};

% Edges

\draw (3) -- (10);
\path [bend right, distance=1cm]   (1) edge (6);
 \path [bend right, distance=1cm]   (2) edge (7);
\draw (4) -- (7);
\draw (5) -- (6);
\draw (8) -- (10);

\end{tikzpicture}

\caption{A threshold graph where $D$ is the set of dominating vertices (forming a clique), $I$ is the set of independent vertices and the labels indicate the construction order of the graph. For the sake of simplicity, we omit all edges implied by the order of addition and type of the vertex added, except those of three diametral paths, namely $(1,6,5)$ and $(2,7,4)$ of Type II and $(3,10,8)$ of Type I.}
\label{fig:threshold}
\end{figure}

\begin{theorem}\label{thm:threshold}
Given a connected threshold graph $G$, $\#TDDP(G) \leq min\{|D|, |I|, \lfloor(|D| + |I|)/3\rfloor\}$.
\end{theorem}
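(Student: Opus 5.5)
The plan is a counting argument. First pin down the diameter, then show that every diametral path must contain at least one vertex of $D$ and at least one vertex of $I$. Total disjointness then turns each of these facts into a bound, and Remark~\ref{rem:general} gives the third.

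\textbf{Step 1 (diameter).} Since $G$ is connected, the last vertex in the construction order must be a dominating vertex; otherwise it would be isolated. That vertex is adjacent to every other vertex, so any two vertices are at distance at most $2$. I will treat the case where $G$ is not complete, so the diameter is exactly $2$ and every diametral path has exactly three vertices $(x,m,y)$ with $x$ and $y$ non-adjacent. The complete case (diameter $1$) is degenerate. There the split into $D$ and $I$ depends only on how the first vertex is labelled, and $K_n$ is already settled by Theorem~\ref{thm:fixed}. I expect this to be the main subtlety: the stated bound genuinely needs diameter $2$, since for $K_2$ with $D=\{v_2\}$, $I=\{v_1\}$ the third term is $0$ while $\#TDDP=1$. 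So I will state this restriction explicitly.

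\textbf{Step 2 (every diametral path meets $I$).} The vertices of $D$ form a clique. Each dominating vertex is adjacent to everything added before it. Each later vertex is adjacent to it if that later vertex is dominating. Hence two vertices of $D$ are always adjacent. The endpoints $x,y$ of a diametral path are non-adjacent, so at least one of them lies in $I$.

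\textbf{Step 3 (every diametral path meets $D$).} Split on the endpoints $x,y$:
\begin{itemize}
\item If one of $x,y$ lies in $D$, the path contains a vertex of $D$ directly.
\item If both $x,y$ lie in $I$, consider their common neighbour $m$. Since $I$ is an independent set (each vertex of $I$ has no neighbours among earlier vertices, and later $I$-vertices are not joined to it), $m$ cannot lie in $I$. So $m\in D$.
\end{itemize}

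\textbf{Step 4 (combine).} The paths in a totally disjoint set share no vertices. Assigning to each path one of its $I$-vertices gives an injection into $I$, so the set has size at most $|I|$. Assigning one of its $D$-vertices gives an injection into $D$, so the size is at most $|D|$. Finally, each path uses $3$ of the $|D|+|I|$ vertices, so Remark~\ref{rem:general} with diameter $2$ gives the bound $\lfloor (|D|+|I|)/3\rfloor$. Taking the minimum of the three bounds yields the theorem.
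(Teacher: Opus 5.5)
Your proof is correct and is essentially the paper's argument: diameter $2$, every diametral path has three vertices including at least one from $D$ and one from $I$ (the paper gets this from its Type~I/Type~II classification, you get it directly from $D$ being a clique and $I$ being independent), and vertex-disjointness then gives all three bounds. Your caveat is also right and worth keeping: the paper's claim that every connected threshold graph has diameter exactly $2$ fails for complete graphs, where $|I|\le 1$ and the bound is genuinely violated (e.g.\ $K_4$ has $\#TDDP=2$ but $\min\{|D|,|I|,\lfloor 4/3\rfloor\}\le 1$), so the theorem only holds for non-complete $G$.
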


\begin{proof}
We first observe that a connected threshold graph has diameter $2$ since the last vertex added must be a dominating vertex to ensure connectivity. 

Further, we observe that a diametral path of a connected threshold graph can be of two different types. A Type I diametral path is between two independent vertices through a single, dominating vertex. This occurs if the dominating vertex has a label larger than the two isolated vertices. See, for example, the path $(3,10,8)$ in Figure \ref{fig:threshold}. A Type II diametral path is between a dominating vertex and an isolated vertex through another dominating vertex. Such a diametral path occurs whenever the label of the isolated vertex falls between the labels of the two dominating vertices.  The paths $(1,6,5)$ and $(2,7,4)$ in Figure \ref{fig:threshold} are examples of Type II diametral paths.
As noted, every diametral path contains either a single dominating vertex or a single isolated vertex. Moreover, every diametral path includes three vertices. Thus, the upper bound follows.
\end{proof}

We note that although the bound in Theorem \ref{thm:threshold} holds with equality for the graph in Figure \ref{fig:threshold}, $\#TDDP(G)$ can be arbitrarily less than the bound. Consider a connected threshold graph $G$ constructed as $k$ dominating vertices followed by $k$ isolated vertices, ending with a single dominating vertex that connects the graph.  While the upper bound indicated would be $\lfloor (2k+1)/3 \rfloor$, we have $\#TDDP(G) = 1$. 

A connected threshold graph $G$ has diameter $2$ (Theorem~\ref{thm:threshold}),
and every diametral path $(a,b,c)$ is a path on three vertices satisfying
$a<c<b$ with either Type~I: $\{a,c\}\in I$ and $b\in D$; or Type~II:
$\{a,b\}\in D$ and $c\in I$. In both cases, the center $b$ is a dominating
vertex. Below, we describe a dynamic program that computes $\#TDDP(G)$ for threshold graphs.

\paragraph{The dynamic program \texttt{DP\_threshold}}

We fix the construction order of $G$ and label the vertices $1,2,\dots,n$
accordingly, so that vertex $t$ is the $t$-th vertex added; each vertex is of
type $D$ or $I$ according to whether it was added as a dominating or an isolated
vertex.  We exploit the fact that for
any vertex $v$, all earlier vertices of a given type bear the same adjacency
relation to $v$, so the algorithm need only track \emph{how many} unused vertices
of each type precede the current one, not which.

We sweep the vertices in order $1,\dots,n$, maintaining the state $(i,\,\delta,\,m)$,
where, after processing the prefix $\{1,\dots,t\}$:
\begin{itemize}
  \item $i$ = number of independent vertices in the prefix that are \emph{unused}
        (not yet assigned to any chosen path);
  \item $\delta$ = number of dominating vertices in the prefix that are unused;
  \item $m$ = number of Type~II paths whose endpoint $a\in D$ and middle vertex
        $c\in I$ have been chosen (with $a<c\le t$) but whose center $b\in D$
        (with $b>c$) is not yet placed --- the \emph{pending} Type~II paths.
\end{itemize}
Let $F_t(i,\delta,m)$ be the maximum number of \emph{completed} diametral paths
achievable over the prefix $\{1,\dots,t\}$ ending in state $(i,\delta,m)$. The
initial state is $F_0(0,0,0)=0$, all others $-\infty$. The transition from $t$ to $t+1$ depends on the type of vertex $v=t+1$.

\medskip
\noindent\emph{If $v\in I$}, one of:
\begin{itemize}
  \item leave unused: $(i,\delta,m)\to(i{+}1,\delta,m)$, completed $+0$;
  \item open a Type~II (use $v$ as a middle $c$, pairing it with an unused
        earlier dominating vertex as the endpoint $a$): requires $\delta\ge1$;
        $(i,\delta,m)\to(i,\delta{-}1,m{+}1)$, completed $+0$;
\end{itemize}

\noindent\emph{If $v\in D$}, one of:
\begin{itemize}
  \item leave unused: $(i,\delta,m)\to(i,\delta{+}1,m)$, completed $+0$;
  \item use as a Type~I center $b$ (consuming two unused independent
        vertices, necessarily earlier than $v$): requires $i\ge2$;
        $(i,\delta,m)\to(i{-}2,\delta,m)$, completed $+1$;
  \item use as the center $b$ completing a pending Type~II: requires
        $m\ge1$; $(i,\delta,m)\to(i,\delta,m{-}1)$, completed $+1$;
\end{itemize}

Note that when a vertex $v$ of type $I$ is left unused, it remains in the unused pool $i$ and may be consumed later by the center of a Type-I path.  Similarly, when a vertex $v$ of type $D$ is left unused, it remains in the unused pool $\delta$ and may be consumed later when an independent middle vertex opens a potential Type~II path.  The number of paths completed at each step are used to update $F$.

At the end, any pending Type~II paths cannot be completed and are discarded, yielding:
\[
  \#TDDP(G) \;=\; \max_{i,\delta}\; F_n(i,\delta,0).
\]

\begin{theorem}\label{thm:thr-dp}
For a connected threshold graph $G$ on $n$ vertices, \texttt{DP\_threshold}
computes $\#TDDP(G)$ correctly in time $O(n^3)$.
\end{theorem}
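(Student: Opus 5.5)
The proof has two parts: soundness (every value the DP reports is realized by an actual set of totally disjoint diametral paths) and completeness (every such set corresponds to a run of the DP reaching at least that value). The running time is handled separately at the end.

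The structural fact used throughout is the adjacency rule of a threshold graph in construction order. For $u<v$, $u$ and $v$ are adjacent if and only if $v\in D$. So a later dominating vertex is adjacent to all earlier vertices, and a later independent vertex is adjacent to none of them. From this, a triple $(a,b,c)$ is a diametral path exactly when $b$ is adjacent to both $a$ and $c$ and $a,c$ are nonadjacent. This recovers the two types listed before the statement. Type I is $a,c\in I$ with both earlier than $b\in D$. Type II is $a\in D$, $c\in I$ with $a<c<b$ and $b\in D$. Recall also that $G$ has diameter $2$ by Theorem~\ref{thm:threshold}.

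\emph{Soundness.} Fix any run of the DP ending in a state $(i,\delta,0)$. From it I will construct explicit paths by induction on $t$, maintaining the following invariant. The prefix $\{1,\dots,t\}$ is partitioned into three kinds of pieces. The first kind is completed diametral paths, counted by $F_t$. The second is $m$ pending pairs $(a,c)$ with $a\in D$, $c\in I$ and $a<c$. The third is $i$ unused vertices of type $I$ together with $\delta$ unused vertices of type $D$.

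Each transition preserves this invariant because of the interchangeability of earlier vertices of the same type.
\begin{itemize}
\item Opening a Type II pair at $v\in I$: take any unused earlier $a\in D$. It is earlier than $v$, and $v$ is not adjacent to it.
\item Completing a Type I path at $v\in D$: take any two unused $I$-vertices. Both are earlier than $v$, so $v$ is adjacent to both, and they are nonadjacent to each other.
\item Completing a pending Type II path at $v\in D$: take any pending pair $(a,c)$. Since $a<c<v$ and $v\in D$, $v$ is adjacent to both $a$ and $c$, while $a$ and $c$ are nonadjacent.
\end{itemize}
Every vertex is used at most once, so the resulting paths are totally disjoint.

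\emph{Completeness.} Take an optimal family $\mathcal P$ of totally disjoint diametral paths. Each path is determined by its center $b$ (which lies in $D$) together with its two endpoints. Define a run of the DP as follows.
\begin{itemize}
\item Each vertex not covered by $\mathcal P$ takes the ``leave unused'' transition.
\item The middle vertex $c$ of each Type II path takes the ``open'' transition when it is reached.
\item Each center $b$ takes the appropriate ``complete'' transition.
\item Every other vertex of a path (the two $I$-endpoints of a Type I path, and the $D$-endpoint $a$ of a Type II path) first takes ``leave unused''. It is consumed later, when its path's $c$ or $b$ is reached.
\end{itemize}
The point to check is that every requirement $\delta\ge1$, $i\ge2$ or $m\ge1$ holds when it is needed. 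This follows from the order constraints on each path. The endpoint $a$ of a Type II path satisfies $a<c$, so it sits in the $\delta$-pool when $c$ arrives. The two endpoints of a Type I path are earlier than $b$, so they sit in the $i$-pool when $b$ arrives. The middle vertex $c$ of a Type II path satisfies $c<b$, so its pair is pending when $b$ arrives.

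The DP does not track which specific vertex is consumed, only that the count is positive. So it suffices to argue that the pool counts in this run dominate the demands. Counting per path, the vertices assigned to each path are released into the pool before they are demanded. The run therefore ends in a state $(i,\delta,0)$ with value $|\mathcal P|$, which gives $\max_{i,\delta}F_n(i,\delta,0)\ge\#TDDP(G)$.

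\emph{Running time.} Each coordinate $i,\delta,m$ is at most $n$. A naive count gives $O(n^3)$ states per step, which is too many. The fix is to bound the states of the prefix $\{1,\dots,t\}$ using the identity
\[
i+\delta+2m+3\cdot(\text{completed})=t,
\]
together with the fact that, for a fixed state, only the maximum value of completed matters. I expect this bound to be the main obstacle, because the identity alone does not remove a full dimension.

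The first thing I would try is a normalization. I would argue that $\delta$ can be taken to be determined, or that only $O(n)$ values of $(i,\delta)$ are relevant at each step. For example, an unused $I$-vertex only matters up to $2$ when a $D$-vertex arrives, which might allow $i$ to be capped. With such a reduction there are $O(n^2)$ states per step, each processed in $O(1)$ time, over $n$ steps, for a total of $O(n^3)$.

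If no such reduction is available, I would fall back on the observation that the reachable states at step $t$ satisfy $i+\delta+2m\le t$ with $F$ stored as the value. This gives an $O(n^3)$ bound on the number of distinct tuples overall, amortized across steps. I would need to state this carefully to match the claimed $O(n^3)$.
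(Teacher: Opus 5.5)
Your soundness and completeness arguments are essentially the paper's validity and optimality argument, and they are fine. The gap is the running time. You correctly flag it but do not close it, and neither of your proposed fixes works.

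Capping $i$ is invalid. Each later dominating vertex used as a Type~I center consumes two pooled independent vertices, so the exact count matters. With $2N$ independent vertices followed by $N$ dominating ones, the optimum is $N$ Type~I paths, but a cap at $2$ allows only one.

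The ``amortized'' count is false. There are $\Theta(t^3)$ triples with $i+\delta+2m\le t$, which sums to $\Theta(n^4)$, and $\Theta(n^3)$ of them can be reachable at a single step. Take $N$ dominating, then $N$ independent, then $N$ dominating vertices. Open $m_0$ Type~II pairs in the middle block, then fire $c_1$ Type~I and $c_2$ Type~II completions among the first $k$ vertices of the last block. This gives the state $(N-m_0-2c_1,\;N-m_0+k-c_1-c_2,\;m_0-c_2)$, which is injective in $(m_0,c_1,c_2)$. So there are $\Theta(n^3)$ distinct reachable states at each of $\Theta(n)$ steps. Your identity $i+\delta+2m+3c=t$ only shows that $F_t$ is determined by the triple, so the DP is really a reachability computation; it removes no coordinate.

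The paper's proof does not supply the missing idea either. It counts $O(n^3)$ states with $O(1)$ transitions and concludes $O(n^3)$ time, which drops the factor $n$ for the sweep. Its ``$O(n^2)$ space (two sweep layers)'' is also inconsistent with $O(n^3)$ states per layer. What your argument and the paper's actually establish is $O(n^4)$, which is still polynomial. By the example above, enumerating the reachable states of \texttt{DP\_threshold} as described costs $\Omega(n^4)$. An $O(n^3)$ bound would need a genuinely new ingredient, such as a dominance rule that collapses one coordinate.

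Separately, you (like the paper) use that $G$ has diameter $2$, which fails when $G$ is complete. For $G=K_n$, which is a connected threshold graph, the DP returns $0$ while $\#TDDP(K_n)=\lfloor n/2\rfloor$, so that case must be excluded.
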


\begin{proof}
\emph{Validity (every solution of \texttt{DP\_threshold} is a set of totally disjoint diametral paths).}
Each vertex is consumed by at most one transition: an independent vertex is
either left in $i$, removed from $i$ as a Type~I endpoint when a center fires, or
removed as a middle $c$ when it opens a Type~II; a dominating vertex is either
left in $\delta$, removed from $\delta$ as a Type~II endpoint $a$, or used as the
current vertex serving as the center $b$ of a Type~I or Type~II path. Since each
completion removes its vertices from the pools (or is the current vertex, used
once), no vertex is shared between two completed paths, so the paths are totally
disjoint. Each completed path is a genuine diametral path: a Type~I center $b$
consumes two earlier independent vertices $a,c<b$, non-adjacent to each other and
both adjacent to $b$; a Type~II completion at $v=b$ uses a pending pair $(a,c)$
with $a<c<b$, which is a Type~II diametral path by the characterization above.

\emph{Optimality (every set of totally disjoint diametral paths is realized by some
\texttt{DP\_threshold} run).}
Let $\mathcal{S}$ be a family of pairwise totally disjoint diametral paths. Since
the paths of $\mathcal{S}$ are vertex-disjoint, each vertex plays exactly one role
in $\mathcal{S}$. Process the vertices in order and, for each $v$, fire the
transition matching its role: a Type~I center $b$ fires the Type~I transition
(consuming its two endpoints $a,c$, both earlier than $v$ and present in the pool
$i$); the center $b$ of a Type~II path fires the completion (its pending entry was
created at its middle $c$); the middle $c$ of a Type~II path fires the open
transition (consuming its endpoint $a$ from $\delta$, present since $a<c$ and
unused); Type~I endpoints and Type~II endpoints $a$ are left in their pools until
their firing step; all other vertices are left unused. The only resource demands
are two unused independent vertices at a Type~I firing and one unused dominating
vertex at a Type~II opening. Both are met: the required vertices belong to
$\mathcal{S}$, are earlier than the firing vertex, and play no other role, so they
sit unused in the pools. Because all earlier vertices of a type relate identically
to the current vertex, the pools need not distinguish their identities --- any two
earlier unused independent vertices serve as the Type~I endpoints $a,c$, and any
earlier unused dominating vertex serves as the Type~II endpoint $a$. Hence the run
is valid and completes exactly $|\mathcal{S}|$ paths, giving
$\max_{i,\delta}F_n(i,\delta,0)\ge\#TDDP(G)$; with validity, equality holds.

\emph{Complexity.}
The state $(i,\delta,m)$ has $O(n)$ values per coordinate, hence $O(n^3)$ states,
each with $O(1)$ transitions, for $O(n^3)$ time and $O(n^2)$ space (two sweep
layers). The construction order is computable in linear time~\cite{threshold}.
\end{proof}

Below is how \texttt{DP\_threshold} applies to the example in Figure \ref{fig:threshold}.

\begin{table}[htb]
\centering
\begin{tabular}{c c c l c c}
\toprule
step & $v$ & type & action & $(i,\delta,m)$ & paths completed in $F$ \\
\midrule
0  & --- & --- & initialize                                   & $(0,0,0)$ & 0 \\
1  & 1   & $D$ & leave unused                                 & $(0,1,0)$ & 0 \\
2  & 2   & $D$ & leave unused                                 & $(0,2,0)$ & 0 \\
3  & 3   & $I$ & leave unused                                 & $(1,2,0)$ & 0 \\
4  & 4   & $I$ & open Type~II ($a=2$, middle $c=4$)           & $(1,1,1)$ & 0 \\
5  & 5   & $I$ & open Type~II ($a=1$, middle $c=5$)           & $(1,0,2)$ & 0 \\
6  & 6   & $D$ & complete pending Type~II (center $b=6$)      & $(1,0,1)$ & 1 \\
7  & 7   & $D$ & complete pending Type~II (center $b=7$)      & $(1,0,0)$ & 2 \\
8  & 8   & $I$ & leave unused                                 & $(2,0,0)$ & 2 \\
9  & 9   & $I$ & leave unused                                 & $(3,0,0)$ & 2 \\
10 & 10  & $D$ & Type~I center ($b=10$, consumes $i=3,8$)     & $(1,0,0)$ & 3 \\
\bottomrule
\end{tabular}
\caption{Trace of \texttt{DP\_threshold} on the threshold graph of Figure~\ref{fig:threshold},
recovering the three totally disjoint diametral paths $(1,6,5)$, $(2,7,4)$, and $(3,10,8)$.
The state is $(i,\delta,m)$ and the last column is the number of completed paths.}
\label{tab:thr-trace}
\end{table}

\subsection{Proper Interval Graphs}

Proper interval graphs are a structured subclass on which diametral paths admit a
clean description, yet computing $\#TDDP$ appears to resist the disjoint-path
techniques that succeed for the classes above. In this section, we describe the
structure of diametral paths in proper interval graphs, give a tight upper bound
on $\#TDDP$, and explain the obstruction that leaves the exact computation open.

\textit{Interval graphs} are the intersection graphs of overlapping intervals on
the number line. It is well known that the maximal cliques of an interval graph
can be linearly ordered so that every vertex is contained in a sequence of
consecutive maximal cliques \cite{interval}. \textit{Proper interval graphs} are
the subclass admitting an interval representation in which no interval is totally
contained within another; equivalently, the orderings of the intervals by left and by right endpoint coincide. Given a proper interval graph on $n$ vertices, we
number the vertices $1$ to $n$ in left to right order of their associated
intervals, and use vertices and intervals interchangeably with their numbers. For
a vertex $v$, let $\ell(v)$ and $r(v)$ denote the smallest and largest indices of
a maximal clique containing $v$ in the linear ordering of maximal cliques; both
are non-decreasing in the vertex number, and two vertices are adjacent if and only
if their clique-index ranges overlap. We write $D$ for the diameter of $G$.

Figure \ref{fig:PIG} shows a proper interval graph with its interval
representation and the linear ordering of its maximal cliques. The two paths
$(1,4,8,11,13)$ and $(2,3,6,10,12)$ shown in bold are diametral and totally
disjoint, so this graph has at least (and exactly) two totally disjoint diametral paths.

\begin{figure}[htb]
    \centering
\begin{tikzpicture}

% First figure on the left
    \begin{scope}[shift={(-4,0)}] % Shift this scope to the left
% Nodes

\node[circle, draw, fill=white] (1) at (0, 3) {\footnotesize 1};
\node[circle, draw, fill=white] (2) at (0, 1) {\footnotesize 2};
\node[circle, draw, fill=white] (3) at (1, 3) {\footnotesize 3};
\node[circle, draw, fill=white] (4) at (1, 1) {\footnotesize 4};
\node[circle, draw, fill=white] (5) at (2, 4) {\footnotesize 5};
\node[circle, draw, fill=white] (6) at (3, 3) {\footnotesize 6};
\node[circle, draw, fill=white] (7) at (2, 2) {\footnotesize 7};
\node[circle, draw, fill=white] (8) at (3, 1) {\footnotesize 8};
\node[circle, draw, fill=white] (9) at (4, 2) {\footnotesize 9};
\node[circle, draw, fill=white] (10) at (5, 3) {\footnotesize 10};
\node[circle, draw, fill=white] (11) at (5, 1) {\footnotesize 11};
\node[circle, draw, fill=white] (12) at (6, 3) {\footnotesize 12};
\node[circle, draw, fill=white] (13) at (6, 1) {\footnotesize 13};

% Edges
\draw (1) -- (2);
\draw (1) -- (3);
\draw[very thick] (1) -- (4);
\draw[very thick] (2) -- (3);
\draw (2) -- (4);
\draw (3) -- (4);
\draw (3) -- (5);
\draw[very thick] (3) -- (6);
\draw (4) -- (5);
%\draw (3) -- (5);
%\draw (1,1.31) .. controls (2.5, -1) and (0.5, 1.5) ..  (5);
 \path [bend left, distance=0.8cm]   (4) edge (6);
\draw (4) -- (7);
\draw[very thick] (4) -- (8);
\draw (5) -- (6);
\draw (5) -- (7);
\draw (5) -- (8);
 \path [bend left, distance=0.8cm]   (5) edge (9);
\draw (6) -- (7);
\draw (6) -- (8);
\draw (6) -- (9);
\draw[very thick] (6) -- (10);
\draw (7) -- (8);
\draw (7) -- (9);
\draw (7) -- (10);
\draw (8) -- (9);
 \path [bend left, distance=0.8cm]   (8) edge (10);
\draw[very thick] (8) -- (11);
\draw (9) -- (10);
\draw (9) -- (11);
\draw (10) -- (11);
\draw (10) -- (13);
\draw[very thick] (10) -- (12);
\draw[very thick] (11) -- (13);
\draw (11) -- (12);
\draw (13) -- (12);

\end{scope}

% First figure on the right
    \begin{scope}[shift={(2.85,0)}] % Shift this scope to the right

% Intervals

\draw[thick] (0, 4) -- (1.9, 4) node[above] {1};
\draw[thick] (0.5, 3.25) -- (2.2, 3.25) node[above] {2};
\draw[thick] (1, 2.5) -- (3.2, 2.5) node[above] {3};
\draw[thick] (1.6, 1) -- (4.1, 1) node[above] {4};
\draw[thick] (2.5, 4) -- (4.4, 4) node[above] {5};
\draw[thick] (3, 3.25) -- (5, 3.25) node[above] {6};
\draw[thick] (3.5, 2.5) -- (5.5, 2.5) node[above] {7};
\draw[thick] (3.8, 1.75) -- (6, 1.75) node[above] {8};
\draw[thick] (4.3, 1) -- (6.5, 1) node[above] {9};
\draw[thick] (4.7, 4) -- (7.5, 4) node[above] {10};
\draw[thick] (5.8, 2.5) -- (8, 2.5) node[above] {11};
\draw[thick] (6.85, 1.75) -- (8.5, 1.75) node[above] {12};
\draw[thick] (7.2, 1) -- (9, 1) node[above] {13};

%Maximal cliques
%\node at (1.6, 2.1) {\textcolor{blue}{$C^1$}};
\draw (1.9, 4.2) [blue, thick, dashed] -- (1.9, 0.8);
\draw (3.2, 4.2) [blue, thick, dashed] -- (3.2, 0.8); 
\draw (4.1, 4.2) [blue, thick, dashed] -- (4.1, 0.8); 
\draw (4.4, 4.2) [blue, thick, dashed] -- (4.4, 0.8); 
\draw (5, 4.2) [blue, thick, dashed] -- (5, 0.8); 
\draw (6, 4.2) [blue, thick, dashed] -- (6, 0.8); 
\draw (7.5, 4.2) [blue, thick, dashed] -- (7.5, 0.8); 
\end{scope}

\end{tikzpicture}

\caption{A proper interval graph and its interval representation where intervals are linearly ordered. Dashed vertical lines indicate the maximal cliques. }
\label{fig:PIG}
\end{figure}

We first remark that a natural greedy algorithm does not compute $\#TDDP$. The graph in Figure~\ref{fig:longpath} has
diameter $3$. Starting from the left, a greedy algorithm that repeatedly extends a path by its highest-numbered available neighbor constructs $p_1=(1,4,6,9)$;
forced onto lower-numbered vertices to stay disjoint, the next path it builds is
$p_2=(2,3,5,7,8)$, which has length $4$ and is therefore not diametral.

Next, we investigate the structure of the diametral paths and note that they are monotone in the vertex order.

\begin{lemma}\label{lem:pi-monotone}
Let $u<v$ be non-adjacent vertices of $G$, and let $P=(x_0,x_1,\dots,x_t)$ be a
shortest path from $u=x_0$ to $v=x_t$. Then $x_0<x_1<\cdots<x_t$. In particular,
every diametral path of $G$, oriented from its smaller to its larger endpoint, is
strictly increasing in the vertex order.
\end{lemma}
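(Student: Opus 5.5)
We argue directly from the clique-index description: vertices $x<y$ are adjacent iff $\ell(y)\le r(x)$. Since $\ell$ and $r$ are non-decreasing in the vertex number, this gives a useful monotonicity fact (M): if $x<y<z$ and $x$ is adjacent to $z$, then $x$ is adjacent to $y$ and $y$ is adjacent to $z$. Indeed $\ell(y)\le\ell(z)\le r(x)$ and $\ell(z)\le r(x)\le r(y)$. This is the usual ``umbrella'' property of the proper interval ordering, and it is the only structural input we need.

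Let $P=(x_0,\dots,x_t)$ be a shortest $u$--$v$ path with $u<v$ non-adjacent, so $t\ge 2$. We first show that $P$ has no local extremum in its interior. Suppose some interior vertex $x_i$ ($0<i<t$) satisfies $x_i>\max(x_{i-1},x_{i+1})$. Without loss of generality $x_{i-1}<x_{i+1}<x_i$. Applying (M) to the triple $x_{i-1}<x_{i+1}<x_i$, using the edge $x_{i-1}x_i$, shows $x_{i-1}$ is adjacent to $x_{i+1}$. The case $x_{i+1}<x_{i-1}$ is symmetric. We can then delete $x_i$ from $P$ to get a shorter walk, contradicting minimality. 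The case $x_i<\min(x_{i-1},x_{i+1})$ is handled the same way, now applying (M) with $x_i$ as the smallest element. Equality cases cannot occur because the vertices of a path are distinct.

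A sequence of distinct integers with no interior local maximum or minimum is strictly monotone. Since $x_0=u<v=x_t$, it must be increasing, so $x_0<x_1<\cdots<x_t$. For the second claim, the endpoints of a diametral path are non-adjacent whenever $D\ge 2$. When $D=1$ the path is a single edge and is trivially increasing once oriented from its smaller endpoint.

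The main obstacle is verifying (M) carefully from the stated adjacency criterion (overlap of clique-index ranges, with $\ell,r$ non-decreasing). One must check both adjacencies in (M), since the local-minimum case uses the edge $x_iz$ rather than $x_iy$. Everything after that is a short exchange argument.
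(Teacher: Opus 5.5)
Your proof is correct, but it takes a different route from the paper's. The paper tracks the right clique index along $P$. It asserts that any step with $r(x_{j+1})\le r(x_j)$ forces the chord $x_{j-1}x_{j+1}$, invoking the absence of proper containment. From this it concludes that $r$ strictly increases along $P$, and it transfers that to the labels. You instead work with the labels directly, isolate the umbrella property (M) as the only structural input, and exclude both kinds of turn (interior local maxima and minima) by the same shortcut.

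Your route is also the more robust one. The paper's intermediate claim breaks on ties in $r$. For the path $a$--$b$--$c$ the clique ranges are $[1,1]$, $[1,2]$, $[2,2]$, so $r$ reads $1,2,2$ along the shortest $a$--$c$ path, yet $ac$ is not an edge. No-containment holds for the intervals, not for the clique ranges, which are nested here. The paper's step also needs a predecessor $x_{j-1}$, so a path whose first step goes backward is not covered; your local-minimum case handles exactly that situation. Your argument only compares distinct labels, so ties in $\ell$ or $r$ never matter. It also never genuinely uses non-adjacency of $u,v$, so it covers every shortest path, including $D=1$, uniformly. Since only label monotonicity is used later, in the bound via the sets $\Lambda_j$, nothing is lost by not proving anything about $r$.
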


\begin{proof}
Suppose some step satisfies $r(x_{j+1})\le r(x_j)$. Since $(x_{j-1}, x_j)\in E$, their
clique ranges overlap, so $\ell(x_j)\le r(x_{j-1})$; combined with
$r(x_{j+1})\le r(x_j)$ and the absence of proper containment, the range of
$x_{j+1}$ overlaps that of $x_{j-1}$, hence $(x_{j-1},x_{j+1})\in E$. Replacing
$x_{j-1},x_j,x_{j+1}$ by $x_{j-1},x_{j+1}$ yields a shorter path, contradicting
that $P$ is shortest. Thus $r(\cdot)$ strictly increases along $P$, and since $r$
is nondecreasing in the vertex number, so is the vertex label.
\end{proof}

The level structure of Lemma~\ref{lem:pi-monotone} might suggest assigning each vertex a single position and packing diametral paths through a layered network. This fails, because a vertex can occupy different positions on different diametral paths. In the graph of Figure~\ref{fig:longpath} (diameter 3), the diametral path $(2,4,6,9)$ uses vertex 2 as its left endpoint, at position 0, while the diametral path $(1,2,5,8)$ uses the same vertex 2 as its second vertex, at position 1. No assignment of one fixed level per vertex is therefore consistent with all diametral paths.

\begin{figure}[htb]
    \centering
    \begin{tikzpicture}
    % Path labels
    \node at (2.2,2) {\textcolor{red}{$p_1$}};

    % Nodes
    \node (p1l) at (-1.2,1.6) {1};
    \node [draw, circle, inner sep=2pt] (p1r) at (1.5,1.6) {};

    \node at (-0.8,1.3) {2};
    \node [draw, circle, inner sep=2pt] (p2l) at (-0.5,1.3) {};
    \node (p2r) at (2,1.3) {};

    \node at (0.2,1) {3};
    \node [draw, circle, inner sep=2pt]  (p3l) at (0.5,1) {};
    \node (p3r) at (3.2,1) {};

    \node (p4l) at (1.1,0.7) {4};
    \node[draw, circle, inner sep=2pt] (p4r) at (3.7,0.7) {};

    \node at (2.2,0.4) {5};
    \node [draw, circle, inner sep=2pt] (p5l) at (2.5,0.4) {};
    \node (p5r) at (5,0.4) {};

     \node (p6l) at (3.3,0.1) {6};
     \node[draw, circle, inner sep=2pt] (p6r) at (5.9,0.1) {};

      \node at (4,-0.2) {7};
     \node [draw, circle, inner sep=2pt] (p7l) at (4.3,-0.2) {};
     \node (p7r) at (6.8,-0.2) {};

     \node at (4.8,-0.5) {8};
     \node [draw, circle, inner sep=2pt] (p8l) at (5.1,-0.5) {};
     \node (p8r) at (7.8,-0.5) {};

     \node (p9l) at (5.5,-0.8) {9};
     \node[draw, circle, inner sep=2pt] (p9r) at (8.4,-0.8) {};

   % L and R
    \draw [decorate, decoration={brace,  amplitude=5pt,mirror, raise=1.3ex}]
  (-1.2,1.7) -- (-1.2, 1.1) node[midway,black, yshift=2em]{};
   \node at (-1.9,1.3) {$L$};

   \draw [decorate, decoration={brace, mirror, amplitude=5pt, raise=1.3ex}]
  (8.5,-1) -- (8.5,-0.4) node[midway,black, yshift=2em]{};
   \node at (9.2,-0.7) {$R$};

%vertical lines for maximal cliques
\node at (1.6, 2.1) {\textcolor{blue}{$C^1$}};
\draw (1.5,1.8) [blue, thick, dashed] -- (1.5,0.5);

\node at (3.2, 1.6) {\textcolor{blue}{$C^2$}};
\draw (3.1,1.2) [blue, thick, dashed] -- (3.1,0.2);

\node at (3.8,1.2) {\textcolor{blue}{$C^3$}};
\draw (3.7,0.9) [blue, thick, dashed] -- (3.7,-0.1);

\node at (5,0.8) {\textcolor{blue}{$C^4$}};
\draw (4.9,0.5) [blue, thick, dashed] -- (4.9,-0.4);

\node at (6,0.6) {\textcolor{blue}{$C^5$}};
\draw (5.9,0.25) [blue, thick, dashed] -- (5.9,-1);

    % Horizontal Edges
    \draw (p1l) -- (p1r);
    \draw (p2l) -- (p2r);
    \draw (p3l) -- (p3r);
    \draw (p4l) -- (p4r);
    \draw (p5l) -- (p5r);
    \draw (p6l) -- (p6r);
    \draw (p7l) -- (p7r);
    \draw (p8l) -- (p8r);
    \draw (p9l) -- (p9r);

    % Paths
    %p1
    \path [red,bend left, thick, distance=1cm]   (p1r) edge (p4r);
    \path [red,bend left, thick, distance=1cm]   (p4r) edge (p6r);
    \path [red,bend left, thick, distance=1cm]   (p6r) edge (p9r);
   
   %p2: left 2,3,5,7,8
   \path [green, bend right, thick, distance=0.5cm]   (p2l) edge (p3l);
   \path [green, bend right, thick, distance=0.5cm]   (p3l) edge (p5l);
   \path [green, bend right, thick, distance=0.5cm]   (p5l) edge (p7l);
   \path [green, bend right, thick, distance=0.5cm]   (p7l) edge (p8l);

   \node at (0,0.5) {\textcolor{green}{$p_2$}};

\end{tikzpicture}

    \caption{A proper interval graph along with its interval representation. Intervals (and vertices) are linearly ordered and increasingly numbered from left to right. Maximal cliques $C^1$ to $C^5$ are also linearly ordered from left to right as shown by dashed vertical lines.
}
    \label{fig:longpath}
\end{figure}

This obstruction defeats the natural formulations. A maximum-flow model that splits each vertex and routes unit flow through layered copies miscounts, because a vertex's level is not well defined. The difficulty is structurally distinct from the classical vertex-disjoint shortest path problem between \emph{fixed} terminal pairs: here the endpoints range freely over all diametral pairs, and it is precisely this freedom that resists a layered packing.

Nonetheless, the level structure yields a tight upper bound. For $0\le j\le D$,
let
\[
  \Lambda_j \;=\; \{\, v\in V : v \text{ is the $j$-th vertex of some diametral
  path}\,\}
\]
be the set of vertices that can occupy position $j$.

\begin{theorem}\label{thm:pi-bound}
For a proper interval graph $G$ with diameter $D$,
\[
  \#TDDP(G) \;\le\; \min_{0\le j\le D} |\Lambda_j|.
\]
\end{theorem}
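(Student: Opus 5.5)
The bound follows from a pigeonhole argument on positions. Let $\mathcal{S}=\{P_1,\dots,P_k\}$ be a maximum family of pairwise totally disjoint diametral paths, so $k=\#TDDP(G)$. Orient each $P_i$ from its smaller to its larger endpoint, as permitted by Lemma~\ref{lem:pi-monotone}, and write $P_i=(x^i_0,x^i_1,\dots,x^i_D)$; every diametral path has exactly $D+1$ vertices, so this indexing is well defined. Fix any $j$ with $0\le j\le D$. By definition of $\Lambda_j$, each vertex $x^i_j$ lies in $\Lambda_j$, because it is the $j$-th vertex of the diametral path $P_i$. Consider the map $i\mapsto x^i_j$ from $\{1,\dots,k\}$ to $\Lambda_j$.

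This map is injective. If $x^i_j=x^{i'}_j$ with $i\neq i'$, then $P_i$ and $P_{i'}$ share a vertex, which contradicts total disjointness. Hence $k\le|\Lambda_j|$ for every $j$, and taking the minimum over $j$ gives the claimed inequality.

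The only delicate point is the convention for the index $j$ in the definition of $\Lambda_j$, since a diametral path can be read from either end. To settle this, I will state explicitly that positions are measured along the orientation from the smaller endpoint, which Lemma~\ref{lem:pi-monotone} makes canonical. I will also note that the argument is unchanged if both orientations are admitted: then $\Lambda_j=\Lambda_{D-j}$ up to symmetry, and it is still enough to pick one orientation per path of $\mathcal{S}$.

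I will add a remark that the argument does not need a vertex to have a single well-defined level. A vertex may belong to several sets $\Lambda_j$, but within the fixed family $\mathcal{S}$ it occupies exactly one position on at most one path. This is exactly why the bound survives the obstruction illustrated in Figure~\ref{fig:longpath}. Tightness, which the text claims, would be shown separately by an example such as Figure~\ref{fig:PIG}, where some $|\Lambda_j|=2$ matches the two disjoint paths; I do not expect this to be part of the inequality proof itself.
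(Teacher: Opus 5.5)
Your proposal is correct and is essentially the paper's own proof. The paper likewise orients each path increasingly via Lemma~\ref{lem:pi-monotone}, notes that the $j$-th vertices of pairwise totally disjoint diametral paths are distinct elements of $\Lambda_j$, and then minimizes over $j$. Your extra remarks on orientation conventions and tightness are harmless but not needed for the inequality.
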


\begin{proof}
Let $\mathcal{S}$ be a set of pairwise totally disjoint diametral paths, each
oriented increasingly by Lemma \ref{lem:pi-monotone}. For a fixed $j$, the $j$-th
vertices of the paths in $\mathcal{S}$ are pairwise distinct, as the paths are
vertex disjoint, and each lies in $\Lambda_j$. Hence
$|\mathcal{S}|\le|\Lambda_j|$; taking the minimum over $j$ gives the bound.
\end{proof}

The bound is constructive: a vertex $v$ lies in $\Lambda_j$ exactly when some diametral pair $(s,t)$ satisfies $d(s,v)=j$ and $d(v,t)=D-j$, which can be tested
from the all-pairs distance matrix; hence $\min_{0\le j\le D}|\Lambda_j|$ is computable in polynomial time.

This bound is the proper interval analogue of the centre bound for trees
(Theorem \ref{thm:tree}) and the barrier-edge bound for $2$-paths (Corollary
\ref{cor:2path}): in each case a position that every diametral path must
traverse limits the number of disjoint such paths. The bound is not always
attained, as the multi-position phenomenon of Figure \ref{fig:longpath} can force
$\#TDDP(G)$ strictly below $\min_j|\Lambda_j|$. Whether $\#TDDP$ can be computed
in polynomial time on proper interval graphs remains open.

%% file: 04-ExtremalGraphs.tex
 In this section, we are interested in the construction of extremal graphs that answer the following question: for given $k$ and $d$, what is the minimum number of edges in a graph having $k$ totally disjoint diametral paths of length $d$? This minimum number of edges is denoted by $MinE(k,d)$. Graphs having $k$ totally disjoint diametral paths of length $d$ and having exactly $MinE(k,d)$ edges are called \textit{extremal}. We find $MinE(k,d)$ for all $k\geq 1$ and $d\geq 1$, identifying extremal graphs for each parameter pair $k$ and $d$.

We first define a class of graphs, $DDPG_{k,d}$, each having $k$ totally disjoint diametral paths of length $d$, for $d \geq 4$.  $DDPG_{k,d}$ is defined to be $k$ vertex disjoint paths of length $d$, $p_1, \ldots,p_k$, where each path $p_i$ has vertices $p_{i,0}, \ldots , p_{i,d}$, connected by edges in that order.  Let $h = \lfloor d/2\rfloor $ and $h' =\lceil d/2 \rceil$.  Each path $p_i$, $1 < i \leq k$, is connected to path $p_1$ by two edges $(p_{1,h}, p_{i,(h-1)})$ and $(p_{1,h'}, p_{i,(h'+1)})$.  Figure \ref{fig:DDPG34} and \ref{fig:DDPG35} show graphs $DDPG_{3,4}$ and $DDPG_{3,5}$, respectively.

\begin{figure}[htb]
    \centering
    \begin{tikzpicture}
    % Path labels
    \node at (-1,1.6) {$p_1$};
    \node at (-1,0) {$p_2$};
    \node at (-1,-1.6) {$p_3$};

    % Nodes
    \node[draw, circle, inner sep=2pt] (p10) at (0,1.6) {};
    \node[above=2pt] at (p10.north) {$p_{1,0}$};

    \node[draw, circle, inner sep=2pt] (p11) at (2,1.6) {};
    \node[above=2pt] at (p11.north) {$p_{1,1}$};

    \node[draw, circle, inner sep=2pt] (p12) at (4,1.6) {};
    \node[above=2pt] at (p12.north) {$p_{1,2}$};

    \node[draw, circle, inner sep=2pt] (p13) at (6,1.6) {};
    \node[above=2pt] at (p13.north) {$p_{1,3}$};

    \node[draw, circle, inner sep=2pt] (p14) at (8,1.6) {};
    \node[above=2pt] at (p14.north) {$p_{1,4}$};

    \node[draw, circle, inner sep=2pt] (p20) at (0,0) {};
    \node[above=2pt] at (p20.north) {$p_{2,0}$};

    \node[draw, circle, inner sep=2pt] (p21) at (2,0) {};
    \node[above=2pt] at (p21.north) {$p_{2,1}$};

    \node[draw, circle, inner sep=2pt] (p22) at (4,0) {};
    \node[above=2pt] at (p22.north) {$p_{2,2}$};

    \node[draw, circle, inner sep=2pt] (p23) at (6,0) {};
    \node[above=2pt] at (p23.north) {$p_{2,3}$};

    \node[draw, circle, inner sep=2pt] (p24) at (8,0) {};
    \node[above=2pt] at (p24.north) {$p_{2,4}$};

    \node[draw, circle, inner sep=2pt] (p30) at (0,-1.6) {};
    \node[above=2pt] at (p30.north) {$p_{3,0}$};

    \node[draw, circle, inner sep=2pt] (p31) at (2,-1.6) {};
    \node[above=2pt] at (p31.north) {$p_{3,1}$};

    \node[draw, circle, inner sep=2pt] (p32) at (4,-1.6) {};
    \node[above=2pt] at (p32.north) {$p_{3,2}$};

    \node[draw, circle, inner sep=2pt] (p33) at (6,-1.6) {};
    \node[above=2pt] at (p33.north) {$p_{3,3}$};

    \node[draw, circle, inner sep=2pt] (p34) at (8,-1.6) {};
    \node[above=2pt] at (p34.north) {$p_{3,4}$};

    % Horizontal Edges
    \draw (p10) -- (p11) -- (p12) -- (p13) -- (p14);
    \draw (p20) -- (p21) -- (p22) -- (p23) -- (p24);
    \draw (p30) -- (p31) -- (p32) -- (p33) -- (p34);

    % Vertical and Diagonal Edges
    
    \draw (p12) -- (p21);
    \draw (p12) -- (p23);
    \draw (p12) -- (p31);
    \draw (p12) -- (p33);

\end{tikzpicture}
    \caption{$DDPG_{3,4}$ where $h=h'=2$.}
        \label{fig:DDPG34}
\end{figure}

\begin{figure}[htb]
    \centering
    \begin{tikzpicture}
    % Path labels
    \node at (-1,1.6) {$p_1$};
    \node at (-1,0) {$p_2$};
    \node at (-1,-1.6) {$p_3$};

    % Nodes
    \node[draw, circle, inner sep=2pt] (p10) at (0,1.6) {};
    \node[above=2pt] at (p10.north) {$p_{1,0}$};

    \node[draw, circle, inner sep=2pt] (p11) at (2,1.6) {};
    \node[above=2pt] at (p11.north) {$p_{1,1}$};

    \node[draw, circle, inner sep=2pt] (p12) at (4,1.6) {};
    \node[above=2pt] at (p12.north) {$p_{1,2}$};

    \node[draw, circle, inner sep=2pt] (p13) at (6,1.6) {};
    \node[above=2pt] at (p13.north) {$p_{1,3}$};

    \node[draw, circle, inner sep=2pt] (p14) at (8,1.6) {};
    \node[above=2pt] at (p14.north) {$p_{1,4}$};

    \node[draw, circle, inner sep=2pt] (p15) at (10,1.6) {};
    \node[above=2pt] at (p15.north) {$p_{1,5}$};

    \node[draw, circle, inner sep=2pt] (p20) at (0,0) {};
    \node[above=2pt] at (p20.north) {$p_{2,0}$};

    \node[draw, circle, inner sep=2pt] (p21) at (2,0) {};
    \node[above=2pt] at (p21.north) {$p_{2,1}$};

    \node[draw, circle, inner sep=2pt] (p22) at (4,0) {};
    \node[above=2pt] at (p22.north) {$p_{2,2}$};

    \node[draw, circle, inner sep=2pt] (p23) at (6,0) {};
    \node[above=2pt] at (p23.north) {$p_{2,3}$};

    \node[draw, circle, inner sep=2pt] (p24) at (8,0) {};
    \node[above=2pt] at (p24.north) {$p_{2,4}$};

    \node[draw, circle, inner sep=2pt] (p25) at (10,0) {};
    \node[above=2pt] at (p25.north) {$p_{2,5}$};

    \node[draw, circle, inner sep=2pt] (p30) at (0,-1.6) {};
    \node[above=2pt] at (p30.north) {$p_{3,0}$};

    \node[draw, circle, inner sep=2pt] (p31) at (2,-1.6) {};
    \node[above=2pt] at (p31.north) {$p_{3,1}$};

    \node[draw, circle, inner sep=2pt] (p32) at (4,-1.6) {};
    \node[above=2pt] at (p32.north) {$p_{3,2}$};

    \node[draw, circle, inner sep=2pt] (p33) at (6,-1.6) {};
    \node[above=2pt] at (p33.north) {$p_{3,3}$};

    \node[draw, circle, inner sep=2pt] (p34) at (8,-1.6) {};
    \node[above=2pt] at (p34.north) {$p_{3,4}$};

    \node[draw, circle, inner sep=2pt] (p35) at (10,-1.6) {};
    \node[above=2pt] at (p35.north) {$p_{3,5}$};
    % Horizontal Edges
    \draw (p10) -- (p11) -- (p12) -- (p13) -- (p14) -- (p15) ;
    \draw (p20) -- (p21) -- (p22) -- (p23) -- (p24) -- (p25);
    \draw (p30) -- (p31) -- (p32) -- (p33) -- (p34) -- (p35);

    % Vertical and Diagonal Edges
    
    \draw (p12) -- (p21);
    \draw (p13) -- (p24);
    \draw (p12) -- (p31);
    \draw (p13) -- (p34);

\end{tikzpicture}
    \caption{$DDPG_{3,5}$ where $h=2$ and $h'=3$.}
    \label{fig:DDPG35}
\end{figure}

\begin{theorem} $DDPG_{k,d}$ has $k$ totally disjoint diametral paths of length $d$ for all  $d\geq 4$.
\end{theorem}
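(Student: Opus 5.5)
The plan has two parts. First, show that each $p_i$ is a shortest path, so $d(p_{i,0},p_{i,d})=d$. Second, show that the diameter of $DDPG_{k,d}$ is exactly $d$. Then the $p_i$ are diametral paths; they are totally disjoint by construction, and the theorem follows.

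\emph{Step 1 (no shortcuts).} Every edge not on some $p_i$ joins $p_1$ to another $p_i$, and each extra $p_i$ is attached to $p_1$ by exactly two edges.
\begin{itemize}
\item For $d$ even ($h=h'$), the only new cycles through $p_i$ and $p_1$ are $p_{i,h-1},p_{i,h},p_{i,h+1},p_{1,h}$. Its two routes between $p_{i,h-1}$ and $p_{i,h+1}$ both have length $2$.
\item For $d$ odd ($h'=h+1$), the corresponding cycle is $p_{i,h-1},p_{i,h},p_{i,h+1},p_{i,h+2},p_{1,h+1},p_{1,h}$. Both routes between $p_{i,h-1}$ and $p_{i,h+2}$ have length $3$.
\end{itemize}
A shortest walk between two vertices of $p_i$ that leaves $p_i$ must pass through $p_1$, entering and leaving via these attachment points. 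Passing through another $p_j$ only adds length, since $p_j$ meets the rest of the graph only at $p_{1,h}$ and $p_{1,h'}$. By the equal-length observations above, such a detour is never shorter than staying on $p_i$. Hence distances along $p_i$ (and along $p_1$) are preserved, and in particular $d(p_{i,0},p_{i,d})=d$.

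\emph{Step 2 (diameter at most $d$).} We bound distances to the ``hub'' $\{p_{1,h},p_{1,h'}\}$. For a vertex $p_{i,a}$ with $i>1$:
\begin{itemize}
\item if $a\le h-1$, then $d(p_{i,a},p_{1,h})=h-a$;
\item if $a\ge h'+1$, then $d(p_{i,a},p_{1,h'})=a-h'$;
\item the exceptional middle vertices $p_{i,h}$ (and $p_{i,h+1}$ when $d$ is odd) are at distance $2$ from the nearer hub vertex;
\item vertices of $p_1$ are at distance $|a-h|$ or $|a-h'|$.
\end{itemize}

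Now take two arbitrary vertices $u,v$.
\begin{itemize}
\item If both lie on the same path, Step 1 gives $d(u,v)\le d$.
\item If both are ``left'' vertices (within $h$ of $p_{1,h}$), routing through $p_{1,h}$ gives $d(u,v)\le 2h\le d$; the case of two ``right'' vertices is symmetric.
\item If $u=p_{i,a}$ is left and $v=p_{j,b}$ is right, routing through $p_{1,h}p_{1,h'}$ gives $(h-a)+(h'-h)+(b-h')\le d$.
\end{itemize}

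\emph{Main obstacle: the middle vertices.} These are not within the clean bound, so they need a separate check. For example, with $d$ even, $p_{i,h}$ to $p_{j,0}$ has length at most $2+h$, which is at most $d$ exactly when $h\ge2$, i.e.\ $d\ge4$. With $d$ odd, $p_{i,h}$ to $p_{j,d}$ goes via $p_{i,h+2}$ and $p_{1,h'}$, giving length at most $3+(d-h')=h+3\le 2h+1$, again requiring $h\ge2$. I expect this middle-vertex check to be the main obstacle and the only place where the hypothesis $d\ge4$ is used. I would verify it by listing the few cases: middle-to-endpoint and middle-to-middle, for both parities of $d$, which confirms that the diameter is exactly $d$.
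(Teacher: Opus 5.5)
Your proposal is correct and follows essentially the same route as the paper. Both arguments check that the attachment edges create no shortcut along any $p_i$, route every cross-path distance through the hub vertices $p_{1,h}$ and $p_{1,h'}$, and treat the middle vertices $p_{i,h}$ (and $p_{i,h+1}$ for odd $d$) separately, which is exactly where $d\ge 4$ is used. When you write out the remaining middle-vertex cases, include the middle-to-middle distances: $4$ for even $d$, and $5$ between $p_{i,h}$ and $p_{j,h+1}$ for odd $d$. The paper singles these out as what forces $d\ge 4$.
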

\begin{proof}
Paths $p_1, \ldots p_k$ are pairwise vertex disjoint and have length $d$.  The edges added, connecting other paths to $p_1$, do not reduce the distance between $p_{i,0}$ and $p_{i,d}$. We must show that the distance between every other pair of vertices in the graph is less than or equal to $d$, thereby maintaining  $p_1, \ldots, p_k$ as diametral paths.
  
First, any pair of vertices in the same path $p_i$ is at distance less than or equal to $d$.  Consider the situation when $d$ is even; in this case, $h$ and $h'$ are equal.  Vertices $p_{i,0}$ and $p_{j,d}$, for $1 \leq i < j \leq k$, are at distance $d$ by the path $(p_{i,0}, \ldots p_{i,(h-1)}, p_{1,h}, p_{j,(h+1)},\ldots p_{j,d})$, as $ (h - 1) + 1 +1 + (d - h - 1) = d$. Similarly, consider vertices $p_{i,0}$ and $p_{j,0}$, for $1 \leq  i < j \leq k$.  They are at distance $d$ by the path $(p_{i,0}, \ldots p_{i,(h-1)}, p_{1,h}, p_{j,(h-1)}, \ldots p_{j,0})$, as $(h-1) + 1 + 1 + (h - 1) = 2h = d$. Similar arguments hold for distances between $p_{i,d}$ and $p_{j,0}$ and between $p_{i,d}$ and $p_{j,d}$, for $1 \leq i < j \leq k$. Pairs of vertices within any of these paths are at distance less than $d$.  Vertices $p_{i,h}$ and $p_{j,h}$, for $1 < i \neq j \leq k$, are at distance $4$ by path $(p_{i,h}, p_{i,(h-1)}, p_{1,h}, p_{j,(h-1)}, p_{j,h})$.  This distance provides the lower bound on $d$ for the class $DDPG_{k,d}$.  A vertex $p_{i,h}$ is at distance $2$ from $p_{1,h}$ and, thus, at distance less than or equal to $h+2$ for any other vertex.

Now let $d$ be odd.  Note that $h' = h+1$ in this case.  Consider vertices $p_{m,0}$ and $p_{n,d}$ for $1 \leq m < n \leq k$.  They are at distance $d$ by the path $(p_{m,0}, \ldots, p_{m.(h-1)}, p_{1,h}, p_{1,h'}, p_{n,(h'+1)}, \ldots, p_{n,d})$, as $(h - 1) + 1 + 1 + 1+ (d - h - 2) = d$,  Consider vertices $p_{m,0}$ and $p_{n,0}$, for $1 \leq m < n \leq k$. They are at distance $d-1$ by the path $(p_{m,0}, \ldots p_{m,(h-1)}, p_{1,h}, \ldots p_{n,(h-1)},\ldots p_{n,0})$, as $h - 1 +1 +1 + h - 1 = 2h = d-1$.  Pairs of vertices within any of the paths are at distance less than $d$.  Vertices $p_{m,h}$ and $p_{m,(h+1)}$ are at distance $5$ from $p_{n,h}$ or $p_{n,(h+1)}$ by choosing the correct neighbor, for $1< m < n \leq k$. All other distances between pairs of vertices are less than or equal to $d$, based on arguments similar to those for the even cases above, leaving our $k$ initial paths as disjoint diametral paths. 
\end{proof}

\begin{theorem} $DDPG_{k,d}$ is an extremal graph having a set of $k$ totally disjoint diametral paths of length $d$, for all  $d\geq 4$.
\end{theorem}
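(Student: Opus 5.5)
We need to show that no graph with $k$ totally disjoint diametral paths of length $d$ has fewer edges than $DDPG_{k,d}$. First count: $DDPG_{k,d}$ has $k(d+1)$ vertices, $kd$ path edges and $2(k-1)$ connecting edges, so $|E| = kd + 2(k-1) = k(d+2)-2$. The task is therefore to prove $MinE(k,d) \ge k(d+2)-2$ for $d\ge 4$.

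Let $G$ be any graph with totally disjoint diametral paths $p_1,\dots,p_k$ of length $d$. These paths contribute $kd$ edges and $k(d+1)$ distinct vertices. The extra edges, those outside the paths, must make $G$ connected and keep the diameter at $d$. Connectivity alone forces at least $k-1$ extra edges. So we must show that \emph{each} path beyond the first needs, on average, two attaching edges. Extra vertices outside the paths only add edges (each needs degree at least one, and a tree-like attachment cannot help), so we may treat them by charging.

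The key lemma I would prove is this: if $H$ is the graph obtained by contracting each $p_i$ to a single node, and the extra edges form a multigraph on these $k$ nodes, then every node other than possibly one has degree at least $2$ in this multigraph. Suppose instead that two distinct paths $p_i$, $p_j$ are each joined to the rest of $G$ by a single extra edge. Say $p_i$ is attached at vertex $p_{i,a}$. Then one of $p_{i,0}$ or $p_{i,d}$ is at distance at least $h$ from $p_{i,a}$. The same holds for $p_j$, attached at $p_{j,b}$. Any path between the far endpoints then has length at least $h + 1 + 1 + h$, plus the distance between the attachment points outside the two paths. For $i$ and $j$ not directly joined, that last distance is at least $1$, which gives a total of at least $2h+3 > d$. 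This contradicts the diameter being $d$. If they are directly joined to each other by their unique edges, the union $p_i\cup p_j$ with one edge is disconnected from the rest unless $k=2$. In the case $k=2$, a single edge between the paths gives some endpoint pair at distance at least $h+1+h \ge d$, with $d+1$ or more when placement is unbalanced; a careful parity check is needed here. I will verify exactly this check.

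Given the lemma, the extra-edge count is at least $\lceil (2(k-1)+1)/2\rceil$ by degree sum, which only gives about $k$ and is not enough. So I instead argue via an ear-style count. Take a spanning structure: the one path allowed degree $1$ can be designated $p_1$'s leaf. Then show directly that each path $p_i$ other than a hub must have \emph{two} edges leaving it, and that these cannot be shared to save edges. The main obstacle is that an edge between $p_i$ and $p_j$ counts toward both, so degree counting halves the bound. To overcome this, I would strengthen the distance argument: if the extra-edge graph has a cycle among non-hub paths or has fewer than $2(k-1)$ edges, then it has two nodes each of which has only one edge to the ``outside'' in the sense of a cut. This gives a 2-edge-connectivity-type argument: the contracted graph with one hub node removed must have each component sending two edges to the hub, or else contain a leaf block violating the distance bound above. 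Formalizing this, likely by induction on $k$ (removing a path attached with exactly two edges and checking that the remainder still satisfies the hypothesis), is where I expect the real work. Equality is then attained by $DDPG_{k,d}$, which was shown in the previous theorem to have the required paths.
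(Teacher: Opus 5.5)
Your proposal has a genuine gap, and it is the step you defer: nothing in it shows that the edges outside the paths number at least $2(k-1)$.

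The lemma you do prove gives only about $k$ such edges by the handshake bound. It actually holds for every path, not all but one. If $p$ has a single attaching edge $(p_a,v)$ and $r$ is another diametral path, then $d(v,r_0)+d(v,r_d)\ge d$, so some endpoint pair is at distance at least $2\lceil d/2\rceil+1>d$. Using $\lceil d/2\rceil$ rather than $h=\lfloor d/2\rfloor$ also removes your parity worry for $k=2$.

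The repair you sketch would not close the gap even if proved as stated. Suppose the contracted graph minus a hub has components with $s_1,\dots,s_m$ paths, each internally connected and sending two edges to the hub. That gives only $\sum_t (s_t+1)=(k-1)+m$ extra edges, which is $2(k-1)$ only when every component is a single path. The simplest configuration your argument does not exclude is the \emph{contracted triangle}: three paths pairwise joined by one edge each. Every path has two attaching edges and the extra-edge multigraph is $2$-edge-connected, yet there are only $3<2(k-1)=4$ extra edges. Your dismissal of vertices outside the paths by \emph{charging} is likewise only asserted.

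What is needed is a metric argument against edges shared by two non-hub paths. One usable tool is the following. Take a route from $P$ to $Q$ that leaves $P$ at position $a$, enters $Q$ at position $b$, and uses $\ell\ge 1$ edges not in $P\cup Q$. The bounds it gives on $d(P_0,Q_0)$ and $d(P_d,Q_d)$ sum to $2d+2\ell>2d$, and likewise for $d(P_0,Q_d)$ and $d(P_d,Q_0)$. So a single route certifies at most two of the four endpoint constraints. When only two routes exist, each must certify exactly two. This forces one of the two paths to have its two route ports at positions at most $d/2-\ell$ and at least $d/2+\ell'$, where $\ell,\ell'$ are the route lengths. Hence those ports are at distance at least $\ell+\ell'$.

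In the contracted triangle, let $g(\cdot)$ be the distance between a path's two ports. The two $X$--$Y$ routes have lengths $1$ and $2+g(Z)$. Applying the tool to the pair avoiding the path $Z$ of largest $g$ yields a path with $g\ge 3+g(Z)$, a contradiction. An argument of this type is the real content of the lower bound. It must extend to every contracted multigraph with fewer than $2(k-1)$ edges, and to graphs with extra vertices.

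For comparison, the paper's proof goes only as far as your lemma: each path needs two connecting edges, and a vertex outside the paths costs an additional edge. It then asserts that $DDPG_{k,d}$ is extremal, so it does not supply this counting step either. You have located the difficulty correctly, but your proposal, like that argument, leaves it unresolved.
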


\begin{proof} We show that in an extremal graph every disjoint diametral path must be connected to other paths by at least two edges.  If only a single edge connects a path $p$ to a vertex $v$ outside $p$, then one end vertex of $p$ is at distance at least $\lceil d/2 \rceil$ from $v$.  If $v$ is part of another diametral path $r$, then the minimum possible maximum distance from an end of $p$ to an end of $r$ is $2 \lceil d/2 \rceil +1$, which is greater than $d$.  If $v$ is not part of another diametral path, then vertex $v$ must be connected to every other diametral path by at least two edges; otherwise, there would be two vertices at distance  $2 \lceil d/2 \rceil + 2$. However, having such a vertex $v$ results in a graph with at least $kd + 2(k-1) + 1$ edges, which is one more than graph $DDPG_{k,d}$. Thus, every vertex of an extremal graph must be contained in a diametral path, implying that $DDPG_{k,d}$ is extremal.
\end{proof}

\begin{corollary} $MinE(k,d) = kd+2(k-1)$ for $d \geq 4$ and $k>0$.
\end{corollary}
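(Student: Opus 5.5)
The corollary $MinE(k,d)=kd+2(k-1)$ for $d\ge 4$ and $k>0$ needs two inequalities: an upper bound from the construction, and a matching lower bound. I would not simply cite the previous theorem for the lower bound. Its proof handles only a single edge leaving a path and a single extra vertex, so I would restate that counting explicitly.

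\emph{Upper bound.} Count the edges of $DDPG_{k,d}$ directly. The $k$ paths contribute $kd$ edges. Each path $p_i$ with $1<i\le k$ adds exactly two edges, $(p_{1,h},p_{i,h-1})$ and $(p_{1,h'},p_{i,h'+1})$, for $2(k-1)$ more. By the theorem showing that $DDPG_{k,d}$ has $k$ totally disjoint diametral paths of length $d$ for $d\ge 4$, this gives $MinE(k,d)\le kd+2(k-1)$. For $k=1$ the bound reads $d$, which a path $P_{d+1}$ attains; this is also the trivial lower bound, so the case is immediate.

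\emph{Lower bound.} Let $G$ be any graph with totally disjoint diametral paths $p_1,\dots,p_k$ of length $d$, where $k\ge 2$. The path edges give $kd$ edges. I would form an auxiliary multigraph $H$ as follows. Contract each $p_i$ to a single node. Keep every vertex outside the paths as its own node. Keep only the edges of $G$ that are not path edges. Then $H$ is connected, since $G$ is.

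The key claim is that every path-node of $H$ has degree at least $2$. The argument is the one in the previous theorem. Suppose exactly one edge $e=(x,v)$ leaves $p_i$, with $x\in p_i$. Some end of $p_i$ lies at distance at least $\lceil d/2\rceil$ from $x$. Let $r$ be another diametral path, reached through $v$. Reaching the far end of $r$ from $v$ costs at least $\lceil d/2\rceil$ more. So some pair of endpoints is at distance at least $2\lceil d/2\rceil+1>d$, contradicting that the diameter is $d$.

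Then sum degrees in $H$. Let $N$ be the set of non-path vertices. Every non-path vertex has degree at least $1$ in $H$, and for $N\neq\emptyset$ the previous theorem's argument shows such a vertex must send at least two edges toward each other path. Either way, $|E(H)|\ge \tfrac12\,(2k+|N|)\ge k$, and connectivity gives $|E(H)|\ge k+|N|-1$.

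\emph{Main obstacle.} These counts give only about $k$ extra edges, not $2(k-1)$. The real lower bound must exclude the case where $H$ is a cycle on the path-nodes, for example $p_1p_2\cdots p_kp_1$. That configuration has exactly $k$ non-path edges, which is below $2(k-1)$ once $k\ge 3$. So the needed strengthening is a distance argument showing every pair of diametral paths is joined "closely", by an edge or a short connector. The idea is to compare each endpoint of $p_i$ with each endpoint of $p_j$ (four pairs, all at distance at most $d$) and show that a route through a third path is too long. I would use the positions of the attachment points along the paths: the endpoints force attachments near the middle index $h$, and a detour through a third path's middle adds at least $2$. I expect this step to carry all the difficulty, and if it fails the corollary would need a different extremal structure.
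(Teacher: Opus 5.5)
\textbf{Verdict: the proposal has a genuine gap.} It proves the lower bound only for $k\le 2$. For $k\ge 3$ the lower bound is the entire content of the statement, and your final paragraph is a plan rather than an argument.

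\textbf{What you do establish.} Your upper bound is the paper's argument: count the $kd+2(k-1)$ edges of $DDPG_{k,d}$ and invoke the theorem that its $k$ paths are totally disjoint and diametral. Your contraction count is sound, but it only gives $MinE(k,d)\ge kd+k$. That matches $kd+2(k-1)$ exactly when $k\le 2$.

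\textbf{Your diagnosis of the paper is correct.} Its proof of the corollary just counts the edges of $DDPG_{k,d}$ and relies on the preceding extremality theorem. That proof shows at least two non-path edges leave each $p_i$, then discusses extra vertices. It never explains why this forces $2(k-1)$ rather than $k$ non-path edges, and its counting does not exclude a cycle $p_1p_2\cdots p_kp_1$ of single edges. One minor slip: that argument does not show a non-path vertex sends two edges to every other path. A pendant vertex attached to $p_{1,h}$ in $DDPG_{k,d}$ keeps the diameter $d$. You only use degree at least $1$, so this is harmless.

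\textbf{Your proposed repair would not work as stated.} A lemma that a route through a third path is ``too long'' fails in $DDPG_{k,d}$ itself. There, $p_i$ and $p_j$ with $i,j\ge 2$ are connected only through $p_{1,h}$ and $p_{1,h'}$. Knowing that every pair is joined closely also gives no edge count by itself.

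\textbf{An argument that does exclude cycles.} Use attachment positions:
\begin{enumerate}
\item Take a connection that leaves $p_i$ at position $x$ and enters $p_j$ at position $y$ after $\ell$ outside edges. Since $|s-x|+|(d-s)-x|=d$, it serves at most two of the four endpoint pairs, and two only if they share an endpoint.
\item Serving both ends of $p_j$ from $p_{i,s}$ forces $|s-x|\le d/2-\ell$.
\item Suppose $p_i,p_j$ are joined by a single edge, and all other connections leave $p_i$ at one common point and enter $p_j$ at one common point, the shortest having length $L$. Then one of the two paths has its two attachment points at distance at least $L+1$; call this distance its spread.
\item Around a cycle of single edges, $L$ equals $k-1$ plus the spreads of the other $k-2$ paths. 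Take the path of maximum spread. The adjacent pair avoiding it yields a strictly larger spread, a contradiction.
\end{enumerate}
For $k=3$ the triangle is the only configuration with at most three non-path edges, so this settles $k=3$.

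\textbf{The missing step for $k\ge 4$.} You must also rule out every other bridgeless configuration with fewer than $2(k-1)$ non-path edges. Examples are $K_4$ minus an edge, a $4$-cycle with one doubled edge, or these with edges subdivided by non-path vertices. In these, some pairs have several alternative connections with different attachment points, so the two-route dichotomy above no longer applies. That general exclusion is what neither your proposal nor the paper's text supplies.
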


\begin{proof}
    The graph $DDPG_{k,d }$ above has $kd + 2(k-1)$ edges. The extremal paths together have $kd$ edges, and $k-1$ of these paths have $2$ edges connecting to path $p_1$.
\end{proof}

There remain the problems for distances $d<4$.  For $d=1$, a complete graph on $2k$ vertices is necessary, as every vertex must be a neighbor of every other vertex if the graph is to have diameter $1$.  So, $MinE(k,1) = 2k^2 - k$, for all $k>0$.

	For $d=2$, consider a construction similar to $DDPG_{k,d}$ given above,  with $k$ paths $p_1, \ldots, p_k$ of length $2$, connect the middle vertex of path $p_1$, i.e., $p_{1,1}$, to all three vertices in every other path $p_i, 1 < i \leq k$.  See Figure \ref{fig:DDPG32} for an example with $k = 3$.  Since it is similar in construction, we will call this class of graphs $DDPG_{k,2}$.  Each $DDPG_{k,2}$ has diameter $2$ and $k$ totally disjoint diametral paths.  The graph has $5k - 3$ edges; $3(k-1)$ edges to connect paths $p_i$, $1 < i \leq k$, to the path $p_1$ and $2k$ edges in the $k$ disjoint diametral paths $p_1, \ldots ,p_k$.

\begin{figure}[htb]
    \centering
    \begin{tikzpicture}
    % Path labels
    \node at (-1,1.6) {$p_1$};
    \node at (-1,0) {$p_2$};
    \node at (-1,-1.6) {$p_3$};

    % Nodes
    \node[draw, circle, inner sep=2pt] (p10) at (0,1.6) {};
    \node[above=2pt] at (p10.north) {$p_{1,0}$};

    \node[draw, circle, inner sep=2pt] (p11) at (2,1.6) {};
    \node[above=2pt] at (p11.north) {$p_{1,1}$};

    \node[draw, circle, inner sep=2pt] (p12) at (4,1.6) {};
    \node[above=2pt] at (p12.north) {$p_{1,2}$};

    \node[draw, circle, inner sep=2pt] (p20) at (0,0) {};
    \node[above=2pt] at (p20.north) {$p_{2,0}$};

    \node[draw, circle, inner sep=2pt] (p21) at (2,0) {};
    \node[above=2pt] at (p21.north) {$p_{2,1}$};

    \node[draw, circle, inner sep=2pt] (p22) at (4,0) {};
    \node[above=2pt] at (p22.north) {$p_{2,2}$};

    \node[draw, circle, inner sep=2pt] (p30) at (0,-1.6) {};
    \node[above=2pt] at (p30.north) {$p_{3,0}$};

    \node[draw, circle, inner sep=2pt] (p31) at (2,-1.6) {};
    \node[above=2pt] at (p31.north) {$p_{3,1}$};

    \node[draw, circle, inner sep=2pt] (p32) at (4,-1.6) {};
    \node[above=2pt] at (p32.north) {$p_{3,2}$};

    % Horizontal Edges
    \draw (p10) -- (p11) -- (p12);
    \draw (p20) -- (p21) -- (p22);
    \draw (p30) -- (p31) -- (p32);

    % Vertical and Diagonal Edges
    
    \draw (p11) -- (p20);
    \draw (p11) -- (p21);
    \draw (p11) -- (p22);
    \draw (p11) -- (p30); %this should be a curve
    \draw (p11) .. controls (1.5, 1) and (1.5, -1) ..  (p31);
    \draw (p11) -- (p32);

\end{tikzpicture}
    \caption{$DDPG_{3,2}$.}
        \label{fig:DDPG32}
\end{figure}

\begin{theorem}
$DDPG_{k,2}$ is an extremal graph having a set of $k$ totally disjoint diametral paths of length $2$.
\end{theorem}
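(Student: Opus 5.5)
Since the text before the statement already shows that $DDPG_{k,2}$ has diameter $2$, has $k$ totally disjoint diametral paths, and has $5k-3$ edges, what remains is the lower bound. I will show that every graph $G$ of diameter $2$ containing $k$ totally disjoint diametral paths of length $2$ has at least $5k-3$ edges. Then $MinE(k,2)=5k-3$ and $DDPG_{k,2}$ is extremal.

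Let $n=|V(G)|$. Since the $k$ paths are totally disjoint and each has $3$ vertices, $n\ge 3k$. The case $k=1$ is immediate, since any path of length $2$ has $2$ edges. So assume $k\ge 2$. I split on whether $G$ has a dominating vertex, i.e. a vertex of degree $n-1$.

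\emph{Case 1: $G$ has a dominating vertex $u$.}
\begin{itemize}
\item Vertex $u$ lies on at most one of the $k$ paths, so at least $k-1$ of the paths avoid $u$ entirely.
\item Each of these $k-1$ paths contributes its $2$ own edges, none incident to $u$, and these edges are distinct across paths by disjointness.
\item Together with the $n-1$ edges at $u$, this gives
\[|E(G)|\ge (n-1)+2(k-1)\ge (3k-1)+2k-2=5k-3.\]
\end{itemize}

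\emph{Case 2: $G$ has no dominating vertex.} Here I invoke the classical extremal result of Erdős and Rényi (also in Bollobás' \emph{Extremal Graph Theory}): a graph of diameter $2$ on $n$ vertices with maximum degree at most $n-2$ has at least $2n-5$ edges. Then
\[|E(G)|\ge 2n-5\ge 6k-5\ge 5k-3\quad\text{for } k\ge 2.\]
In both cases $|E(G)|\ge 5k-3$, which is what we need.

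The main obstacle is Case 2, since the $2n-5$ bound is not proved earlier in the paper. I would cite it. If a self-contained argument is needed, I would take a vertex $v$ of maximum degree $\Delta\le n-2$ and count edges needed to reach the non-neighbours of $v$ within distance $2$. Each non-neighbour needs a neighbour in $N(v)$. The neighbours of $v$ must also reach each other and the non-neighbours within distance $2$, which forces roughly one further edge per vertex beyond a spanning tree. Pinning the constant exactly at $-5$ takes care, but for our purposes any bound of the form $2n-c$ with $c\le 3$ suffices once $n\ge 3k$ and $k$ is large.

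For small $k$, where the slack is thin, I would check directly that no diameter-$2$ graph on $6$ vertices with $6$ edges contains two disjoint induced $P_3$'s. With $6$ vertices and $6$ edges the graph is unicyclic. By checking possible cycle lengths, the only unicyclic diameter-$2$ graphs on $6$ vertices are a star plus one edge (with a dominating vertex) and the triangle with all pendants on one vertex (the same graph). Case 1 already handles these.
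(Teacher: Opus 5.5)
Your proof is correct, but it takes a different route from the paper's. The paper has three parts:
\begin{itemize}
\item it checks that $p_{1,1}$ is adjacent to every other vertex;
\item it argues edge-minimality, since deleting a path edge or an edge at $p_{1,1}$ destroys the property;
\item it bounds the edge count through pendant vertices only: a degree-$1$ vertex forces its neighbor to be dominating, and at most two such vertices occur, on one path.
\end{itemize}
Edge-deletion shows only inclusion-minimality, not minimum size. The pendant-vertex reasoning also never addresses graphs of minimum degree at least $2$ with no dominating vertex.

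Your dichotomy covers both regimes. Case 1 is an exact elementary count: $n-1$ edges at the hub plus the $2(k-1)$ edges of the paths avoiding it, with $n\ge 3k$. Case 2 imports the Erd\H{o}s--R\'enyi bound $|E|\ge 2n-5$ for diameter-$2$ graphs with $\Delta\le n-2$. With $n\ge 3k$ this gives $6k-5\ge 5k-3$ precisely for $k\ge 2$, and $k=1$ is trivial. The cost is reliance on an external theorem. The gain is a genuine global lower bound, i.e.\ an actual proof that $MinE(k,2)=5k-3$, which the paper's sketch leaves implicit.

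\textbf{Corrections.}
\begin{itemize}
\item Your fallback remark about a bound $2n-c$ with $c\le 3$ cannot be realized. Replace one vertex of $C_5$ by an independent set, each copy joined to the two former neighbors. This gives diameter-$2$ graphs with no dominating vertex and exactly $2n-5$ edges, so no bound with $c<5$ holds.
\item The constant $-5$ is sharp, and at $k=2$, $n=6$ there is no slack. A self-contained argument therefore has two options: prove the sharp constant, or prove a weaker one and treat the small cases separately, as your $6$-vertex check does. Once you cite the sharp bound, that check is redundant.
\item State the one-line verification that $p_{1,1}$ is dominating in $DDPG_{k,2}$. This shows the diameter is $2$ and each $p_i$ is diametral, instead of relying on the text's assertion.
\end{itemize}
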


\begin{proof}
 The given paths $p_1, \ldots , p_k$ are totally disjoint diametral paths of length $2$.  Every pair of vertices are either of the same given path or are at distance at most $2$ through vertex $p_{1,1}$.
 
 The graphs are minimal, as the removal of any edge from an original vertex disjoint path reduces the maximum number of totally disjoint paths from $k$ to $k-1$; removal of an edge incident to $p_{1,1}$ from vertex $v$ in another path $p_i$ for  $2\leq i \leq k$ makes $v$ and $p_{1,0}$ be at a distance of 3.  

Furthermore, $DDPG_{k,2}$ has a minimum number of edges for every $k$ since every vertex disjoint diametral path has $2$ edges connecting the three vertices in the path.  For a vertex $u$ of degree $1$, every other vertex must be a neighbor of its unique neighbor $v$.  Thus, every vertex of each vertex disjoint diametral path must be a neighbor of $v$.  There can be at most $2$ vertices of degree $1$, both in the same path, which $DDPG_{k,2}$ has.   
\end{proof}
\begin{corollary}
 $ MinE(k,2) =  5k-3.$
\end{corollary}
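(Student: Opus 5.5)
The corollary $MinE(k,2)=5k-3$ needs two inequalities. The upper bound $MinE(k,2)\le 5k-3$ is immediate from the construction: $DDPG_{k,2}$ has diameter $2$, has $k$ totally disjoint diametral paths, and has $2k+3(k-1)=5k-3$ edges, as counted above. The real work is the matching lower bound. The preceding theorem's minimality argument only shows that $DDPG_{k,2}$ is edge-minimal, and I would not rely on it as a proof that no graph does better. Instead I plan a direct counting argument for an arbitrary graph $G$ of diameter $2$ with $k$ totally disjoint diametral paths $p_1,\dots,p_k$.

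First, reduce to the case where $G$ has no vertices outside the paths. Extra vertices only add edges, since each lies in a connected graph and so contributes at least one edge, and the counting below can be run on the $3k$ path vertices while treating extra vertices as possible common neighbours. For simplicity I first handle $|V|=3k$ and then argue that extra vertices do not help; each extra vertex must be adjacent to something, so it costs at least one edge per vertex added.

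The path edges give exactly $2k$ edges. It remains to show that at least $3(k-1)$ further edges are needed. I would use a degree-sum argument. Let $v$ be a vertex of maximum degree $\Delta$. Every vertex not adjacent to $v$ must share a neighbour with $v$.

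Split into cases on whether $G$ has a vertex $u$ of degree $1$. If it does, its unique neighbour $w$ must be adjacent to all other $3k-2$ vertices, because every vertex is within distance $2$ of $u$. Then $|E|\ge 3k-2$ edges at $w$ plus the path edges not incident to $w$. A path through $w$ has at most $2$ edges at $w$, so at least $2k-2$ path edges avoid $w$. This gives $|E|\ge (3k-2)+(2k-2)=5k-4$. Closing the final gap of one edge needs more care: the path containing $w$ contributes edges at $w$ that are already counted, and I need to show one more edge is forced, for example by checking the distances between the endpoints of that path and the other vertices. This is delicate.

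If instead all degrees are at least $2$, then $|E|\ge 3k$, which is far too weak. This is the main obstacle. Here I would try the standard fact that a diameter-$2$ graph on $n$ vertices with no dominating vertex has at least $2n-5$ edges (Erdős–Rényi type results). With $n=3k$ this gives $6k-5\ge 5k-3$ whenever $k\ge 2$. When a dominating vertex exists, it alone contributes $3k-1$ edges, and the path edges avoiding it give at least $2k-2$ more, for $|E|\ge 5k-3$.

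So the plan is a three-way split: a dominating vertex exists; a degree-$1$ vertex exists, which forces a dominating vertex and so reduces to the first case; or neither exists, where I invoke the $2n-5$ bound. I expect justifying or proving that extremal bound, together with handling extra non-path vertices, to be the hardest part.
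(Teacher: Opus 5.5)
Your plan is correct, and it takes a different route from the paper. The paper gets the lower bound from the proof of the preceding theorem. There, every edge of $DDPG_{k,2}$ is shown to be necessary (deleting one destroys the property). It also observes that a vertex of degree $1$ forces its neighbour to be adjacent to all other vertices, and that at most two such vertices can exist. That shows $DDPG_{k,2}$ is edge-minimal and is optimal among graphs with a pendant vertex. It says nothing about graphs of minimum degree at least $2$, which is exactly the case you single out.

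Your dichotomy on the existence of a dominating vertex covers all graphs:
\begin{itemize}
\item If $w$ dominates, then $\deg(w)=n-1\ge 3k-1$. A path meets at most two edges at $w$, so at least $2(k-1)$ path edges avoid $w$, giving $|E|\ge 5k-3$.
\item If no vertex dominates, the Erd\H{o}s--R\'enyi bound $|E|\ge 2n-5$ for diameter-$2$ graphs without a dominating vertex gives $|E|\ge 6k-5\ge 5k-3$ for $k\ge 2$. The case $k=1$ is trivial, since any path of length $2$ already has $2=5\cdot 1-3$ edges.
\end{itemize}
Your route buys a genuine lower bound over all graphs with $k$ totally disjoint diametral paths of length $2$. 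The paper's argument only establishes inclusion-minimality of one construction. The cost is reliance on an external extremal theorem, which you should cite explicitly. If you want to be self-contained, it has a short proof by cases on the minimum degree: degree $1$ is impossible without a dominating vertex, and degree $3$ follows by counting edges from the remaining vertices into $N(x)$ for a degree-$3$ vertex $x$.

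Two clean-ups are needed.
\begin{itemize}
\item In the degree-$1$ case you counted only $3k-2$ edges at $w$, forgetting the edge $uw$. In fact $\deg(w)=n-1\ge 3k-1$, so the ``missing'' edge is an arithmetic slip. This case is in any event subsumed by the dominating-vertex case, as you say at the end.
\item Drop the preliminary reduction to $|V|=3k$. It is not valid as stated, since deleting vertices can destroy diameter $2$, and ``each extra vertex costs an edge'' does not make it one. It is also unnecessary: both bounds $n-1$ and $2n-5$ increase with $n$, and $n\ge 3k$ is all your argument uses.
\end{itemize}
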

\begin{proof}
$DDPG_{k,2}$ has $5k-3$ edges, i.e., 3 edges to connect each path $p_i$, $1 < i \leq k$, to path $p_1$ and $2$ edges in each of the $k$ vertex disjoint diametral paths.  All these edges are necessary, as discussed in the proof above.   
\end{proof}

	For $k=3$, there are two interesting graph classes.  First define class $C_{k,3}$ to be $k$ vertex disjoint paths of length $3$, with the central two vertices of these paths connected into a complete graph $K_{2k}$.  Every vertex of degree $1$ is at distance $3$ from every other vertex of degree $1$ and at most distance $2$ from vertices of the $K_2$; all other vertices are direct neighbors.  These graphs are minimal, as removal of any edge disconnects the graph or results in a diametral pair at distance $4$.  A $C_{k,3}$ has $2k^2+k$ edges, i.e., $2k^2 - k$ edges in the complete graph and $2k$ edges connecting the $2k$ degree $1$ vertices to the vertices of the complete graph.

	Next, consider the following graph class $DDPG_{k,3}$, constructed similarly to the $DDPG_{k,d}$ graphs for $d>3$.  Given $k$ vertex disjoint paths $p_1,\ldots , p_k$ of length $3$, connect all vertices of paths $p_i,$ for all $2 \leq i \leq k$, to $p_1$ by edges $(p_{1,1}, p_{i,0}), (p_{1,1}, p_{i,1}), (p_{1,2}, p_{i,2}), (p_{1,2}, p_{i,3})$, excluding edge $(p_{1,1}, p_{2,1})$.  A $DDPG_{k,3}$ has $7k-5$ edges, i.e. $3k$ edges in the $k$ disjoint diametral paths and $4(k-1)$ edges connecting other paths to path $p_1$, less 1 by excluding edge $(p_{1,1}, p_{2,1})$.  See Figure \ref{fig:DDPG33} showing $DDPG_{3,3}$.

\begin{figure}[htb]
    \centering
    \begin{tikzpicture}
    % Path labels
    \node at (-1,1.6) {$p_1$};
    \node at (-1,0) {$p_2$};
    \node at (-1,-1.6) {$p_3$};

    % Nodes
    \node[draw, circle, inner sep=2pt] (p10) at (0,1.6) {};
    \node[above=2pt] at (p10.north) {$p_{1,0}$};

    \node[draw, circle, inner sep=2pt] (p11) at (2,1.6) {};
    \node[above=2pt] at (p11.north) {$p_{1,1}$};

    \node[draw, circle, inner sep=2pt] (p12) at (4,1.6) {};
    \node[above=2pt] at (p12.north) {$p_{1,2}$};

    \node[draw, circle, inner sep=2pt] (p13) at (6,1.6) {};
    \node[above=2pt] at (p13.north) {$p_{1,3}$};
   
    \node[draw, circle, inner sep=2pt] (p20) at (0,0) {};
    \node[above=2pt] at (p20.north) {$p_{2,0}$};

    \node[draw, circle, inner sep=2pt] (p21) at (2,0) {};
    \node[above=2pt] at (p21.north) {$p_{2,1}$};

    \node[draw, circle, inner sep=2pt] (p22) at (4,0) {};
    \node[above=2pt] at (p22.north) {$p_{2,2}$};

    \node[draw, circle, inner sep=2pt] (p23) at (6,0) {};
    \node[above=2pt] at (p23.north) {$p_{2,3}$};

    \node[draw, circle, inner sep=2pt] (p30) at (0,-1.6) {};
    \node[above=2pt] at (p30.north) {$p_{3,0}$};

    \node[draw, circle, inner sep=2pt] (p31) at (2,-1.6) {};
    \node[above=2pt] at (p31.north) {$p_{3,1}$};

    \node[draw, circle, inner sep=2pt] (p32) at (4,-1.6) {};
    \node[above=2pt] at (p32.north) {$p_{3,2}$};

    \node[draw, circle, inner sep=2pt] (p33) at (6,-1.6) {};
    \node[above=2pt] at (p33.north) {$p_{3,3}$};
   
    % Horizontal Edges
    \draw (p10) -- (p11) -- (p12) -- (p13);
    \draw (p20) -- (p21) -- (p22) -- (p23);
    \draw (p30) -- (p31) -- (p32) -- (p33);

    % Vertical and Diagonal Edges
    
    \draw (p11) -- (p20);
    \draw (p11) -- (p30);
    \draw (p11) .. controls (1.5, 1) and (1.5, -1) ..  (p31);
    \draw (p12) -- (p22);
    \draw (p12) -- (p23);
    \draw (p12) .. controls (4.5, 1) and (4.5, -1) ..  (p32);
    \draw (p12) -- (p33);

\end{tikzpicture}
    \caption{$DDPG_{3,3}$.}
        \label{fig:DDPG33}
\end{figure}

\begin{theorem}
$DDPG_{k,3}$ is an extremal graph having a set of $k$ totally disjoint diametral paths of length $3$.
\end{theorem}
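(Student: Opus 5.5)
The plan has two parts. First we verify that $DDPG_{k,3}$ has diameter exactly $3$ and that $p_1,\ldots,p_k$ are diametral. Then we show that no graph with $k$ totally disjoint diametral paths of length $3$ has fewer than $7k-5$ edges.

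\emph{Diameter and diametral paths.} Call $p_{i,0},p_{i,1}$ the \emph{left} vertices and $p_{i,2},p_{i,3}$ the \emph{right} vertices of $p_i$.
\begin{itemize}
\item Every left vertex except $p_{2,1}$ is equal or adjacent to $p_{1,1}$, and every right vertex is equal or adjacent to $p_{1,2}$. Since $(p_{1,1},p_{1,2})$ is an edge, any two such vertices are at distance at most $3$.
\item The exceptional vertex $p_{2,1}$ reaches $p_{1,1}$ in two steps via $p_{2,0}$, and reaches $p_{1,2}$ in two steps via $p_{2,2}$. Any left vertex other than $p_{2,1}$ is then within $1+2=3$ of $p_{2,1}$ through $p_{1,1}$, and any right vertex is within $1+2=3$ through $p_{1,2}$. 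So the diameter is at most $3$.
\item For $i\ge 2$, the neighbours of $p_{i,0}$ are $\{p_{i,1},p_{1,1}\}$ and those of $p_{i,3}$ are $\{p_{i,2},p_{1,2}\}$. These sets are disjoint, and no edge joins $p_{i,0}$ to $p_{i,3}$, so $d(p_{i,0},p_{i,3})=3$.
\item Likewise $p_{1,0}$ and $p_{1,3}$ have the single neighbours $p_{1,1}$ and $p_{1,2}$, so $d(p_{1,0},p_{1,3})=3$.
\end{itemize}
Hence each $p_i$ is a diametral path, and the paths are totally disjoint by construction. The edge count $3k+4(k-1)-1=7k-5$ was already noted.

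\emph{Lower bound.} Let $G$ have totally disjoint diametral paths $q_1,\ldots,q_k$ of length $3$; these contribute $3k$ edges, and we must find $4k-5$ further edges.
\begin{itemize}
\item \emph{No extra vertices.} Following the argument in the proof for $d\ge 4$, a vertex lying on none of the $q_i$ must be joined to each path by at least two edges. Otherwise it is too far from an endpoint of some $q_i$. Such a vertex therefore costs at least $2k$ edges, which is more than it could save, so we may assume every vertex lies on some $q_i$.
\item \emph{Local count.} Endpoints of different paths must be within distance $3$ of each other. For a fixed path $q_i$, consider how its four vertices attach to the rest of the graph, and show that $q_i$ has at least $4$ incident edges going to other paths unless $q_i$ plays a ``hub'' role. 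I would first try a case analysis on the number of degree-$1$ endpoints of $q_i$, in the spirit of the proof for $DDPG_{k,2}$: a degree-$1$ endpoint forces its unique neighbour to lie within distance $2$ of every other path's endpoints.
\item \emph{Charging.} Assign each inter-path edge to one of its two paths. The goal is to show that all paths but one (the hub) receive at least $4$ charged edges, with a single global deficit of $1$. This deficit is exactly the $-1$ from omitting the edge $(p_{1,1},p_{2,1})$. Summing gives at least $3k+4(k-1)-1=7k-5$ edges.
\end{itemize}

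\emph{Expected obstacle.} The hard step is making the ``deficit at most one'' claim rigorous. Several paths might each save an edge in different configurations, for example by routing through another non-hub path, or by two paths both using their interior vertices as shortcuts. I would handle this by considering two paths that each have only $3$ outgoing edges and deriving a pair of their endpoints at distance $4$. Pigeonhole on which of the four vertices of each path lacks a connection should drive that derivation. If that fails, the fallback is induction on $k$: delete a non-hub path together with its incident edges and check that the remaining graph still has diameter $3$.
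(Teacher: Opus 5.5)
Your verification of $DDPG_{k,3}$ is correct, and more careful than the paper's. Every vertex except $p_{2,1}$ is within distance one of $p_{1,1}$ or $p_{1,2}$, and $p_{2,1}$ is at distance two from both. Also, $p_{i,0}$ and $p_{i,3}$ are non-adjacent with no common neighbour. So the diameter is $3$, $p_1,\dots,p_k$ are diametral, and the edge count is $7k-5$.

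The gap is the lower bound. Your charging claim (all paths but one receive at least four inter-path edges, with a single global deficit) is never proved. It cannot be proved, because it is false. Take paths $(a_0,a_1,a_2,a_3)$, $(b_0,\dots,b_3)$, $(c_0,\dots,c_3)$ and add only the six edges
\[
a_0b_2,\quad a_3b_1,\quad b_0c_2,\quad b_3c_1,\quad c_0a_2,\quad c_3a_1 .
\]
The maps $a_i\mapsto b_i\mapsto c_i\mapsto a_i$ and $x_i\mapsto x_{3-i}$ are automorphisms, so breadth-first search from $a_0$ and from $a_1$ suffices.

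From $a_0$ the distance layers are $\{a_1,b_2\}$, $\{a_2,b_1,b_3,c_3\}$, $\{a_3,b_0,c_0,c_1,c_2\}$. From $a_1$ they are $\{a_0,a_2,c_3\}$, $\{a_3,b_2,c_0,c_2\}$, $\{b_0,b_1,b_3,c_1\}$. Hence the diameter is $3$ and $d(a_0,a_3)=d(b_0,b_3)=d(c_0,c_3)=3$. This graph therefore has three totally disjoint diametral paths of length $3$ with only $15<16=7\cdot 3-5$ edges.

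In this example no path is a hub. Each path has four inter-path edges, but every such edge serves two paths at once, and a per-path charging with one deficit cannot account for that. Your induction fallback fails as well: deleting the $c$-path leaves the other two paths joined by $a_0b_2,a_3b_1$, where $d(b_0,a_1)=4$. (For $k=2$ the value $9$ is in fact optimal, so the failure starts at $k=3$.) Thus $MinE(3,3)\le 15$, the statement and the corollary $MinE(k,3)=7k-5$ are false at $k=3$, and no completion of your plan can succeed.

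Your reduction to graphs without extra vertices is also unsound. The $d\ge 4$ argument you borrow concerns a vertex that is the \emph{only} outside neighbour of some path, not an arbitrary off-path vertex. An off-path vertex need not send two edges to each path: a pendant vertex attached to $p_{1,1}$ in $DDPG_{k,3}$ keeps the diameter at $3$.

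The paper's proof does not attempt a global count. After checking the diametral property, it argues only that no single edge of $DDPG_{k,3}$ can be deleted. Path edges are needed for their paths, and removing a connector edge creates a pair at distance $4$. That shows deletion-minimality, which is strictly weaker than attaining $MinE(k,3)$, and the example above shows the difference is real. What can actually be proved is your first half together with that deletion check; the extremality claim itself needs correcting.
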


 \begin{proof}
Every disjoint diametral path $p_i$ must have $3$ edges connecting the $4$ vertices in the path. Removal of one of these edges reduces the number of totally disjoint diametral paths by $1$.  The edges added, connecting other paths to the path $p_1$ maintain the distance between end vertices of other paths.  The connections also make the maximum distance between any pair of vertices to be less than or equal to $3$. Vertex $p_{2,1}$ is at distance $2$ from $p_{1,1}$ and $p_{1,2}$.  Therefore, $p_{2,1}$ remains at distance less than or equal to $3$ from every other vertex through those two vertices.  Removal of an edge connecting any other vertex $v$ to a vertex in path $p_1$ would result in a diametral path of length $4$ between $v$ and $p_{2,1}$.
 \end{proof}

\begin{corollary}
    $MinE(k,3) =  7k-5$ for $k\geq 2$ and $MinE(1,3) =  3$.
\end{corollary}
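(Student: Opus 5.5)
The statement splits into an upper bound, given by explicit constructions, and a matching lower bound. For $k=1$ both are immediate: a path of length $3$ has diameter $3$, and a graph with a diametral path of length $3$ needs the $3$ edges of that path. For $k\ge 2$, the upper bound $MinE(k,3)\le 7k-5$ comes from $DDPG_{k,3}$. The preceding theorem shows it has $k$ totally disjoint diametral paths of length $3$, and the edge count given before the figure is $3k+4(k-1)-1=7k-5$. So the real work is the lower bound $MinE(k,3)\ge 7k-5$.

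For the lower bound, let $G$ be any graph with diameter $3$ containing totally disjoint diametral paths $p_1,\dots,p_k$ of length $3$. These paths contribute $3k$ edges, so I need to show that at least $4k-5$ further edges exist. I would first argue that we may assume every vertex of $G$ lies on some $p_i$. The argument is the one used for $DDPG_{k,d}$: an extra vertex costs at least as many edges as it can save. I would then call an edge \emph{crossing} if it joins two different paths.

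The key local facts come from the diameter constraint on the endpoints. Take an endpoint $x=p_{i,0}$ and any endpoint $y$ of $p_j$ with $j\ne i$. We need $d(x,y)\le 3$, while $d(p_{i,0},p_{i,3})=3$ and $d(p_{j,0},p_{j,3})=3$ must be preserved. From these I would derive the following claims. First, every endpoint of every path other than one distinguished path is incident to a crossing edge. Second, for each pair of paths, all but at most one of their eight vertices must be ``served'' by a crossing edge incident to it. The exception mirrors the single vertex $p_{2,1}$ in $DDPG_{k,3}$, which reaches everything within $3$ through its two path neighbours.

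The counting step is where I expect the main obstacle. A naive count (each crossing edge covers two vertices) gives only about $2k$ crossing edges, which is far short of $4k-5$. Instead I would argue by induction on $k$. The base case $k=2$ requires showing that $9$ edges are necessary, which I would check by a short case analysis on the $8$ vertices. For the inductive step, the goal is to find a path $p_i$ that is incident to at least $4$ crossing edges, such that deleting $p_i$ leaves a graph in which the remaining $k-1$ paths are still diametral with diameter $3$. Deleting $p_i$ would then remove at least $3+4=7$ edges, giving $7(k-1)-5+7=7k-5$.

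The delicate part is guaranteeing that deleting $p_i$ does not destroy shortcuts the other paths depend on. Distances among the remaining vertices may have routed through $p_i$. My first attempt would be to choose $p_i$ as a path that is not a ``hub''. Concretely, I would pick $p_i$ to minimise the number of other paths whose vertex-to-vertex shortest routes pass through it. I would then show that any such route can be rerouted through a hub path at no extra length, because the hub already has edges to both endpoint halves of every other path. If the rerouting fails, the fallback is a direct global count: I would separately bound the crossing edges incident to the endpoints (at least $2$ per non-hub path) and those incident to the inner vertices (at least $2$ per non-hub path, minus one exceptional vertex overall), yielding $4(k-1)-1=4k-5$.
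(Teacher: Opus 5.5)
\textbf{The lower bound you set out to prove is false for every $k\ge 3$.} Your upper bound via $DDPG_{k,3}$ and the case $k=1$ are fine. But $MinE(k,3)\ge 7k-5$ fails for all $k\ge 3$, so neither your induction nor your fallback count can be completed.

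Take paths $a=(a_0,a_1,a_2,a_3)$, $b=(b_0,b_1,b_2,b_3)$, $c=(c_0,c_1,c_2,c_3)$ and add only six crossing edges: $a_1b_2$, $a_3b_2$, $a_1b_0$, $a_1c_0$, $b_2c_2$, $b_2c_3$.
\begin{itemize}
\item Every vertex other than $c_1$ lies in $N[a_1]\cup N[b_2]$, and $a_1b_2$ is an edge.
\item $c_1$ is at distance $2$ from $a_1$ (via $c_0$) and from $b_2$ (via $c_2$).
\item Hence the diameter is at most $3$.
\item In each path the endpoints are non-adjacent and have no common neighbour: $N(a_0)=\{a_1\}$, $N(a_3)=\{a_2,b_2\}$; $N(b_0)=\{b_1,a_1\}$, $N(b_3)=\{b_2\}$; $N(c_0)=\{c_1,a_1\}$, $N(c_3)=\{c_2,b_2\}$.
\end{itemize}
So $a,b,c$ are three totally disjoint diametral paths of length $3$ in a graph with $9+6=15<16=7\cdot 3-5$ edges. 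Appending further paths $x$, each with the four edges $x_0a_1$, $x_1a_1$, $x_2b_2$, $x_3b_2$, gives $7k-6$ edges for every $k\ge 3$.

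The saving comes from splitting the dominating edge across two paths. Here $a$ and $b$ together need only $3$ crossing edges and no far vertex. In $DDPG_{k,3}$, the pair $p_1,p_2$ reaches $3$ crossing edges only by spending the unique far vertex $p_{2,1}$. In this example the far vertex $c_1$ is still available and saves one more edge.

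\textbf{Where your steps fail, and what survives.}
\begin{itemize}
\item Your first local claim is false: $a_0$ and $b_3$ are degree-$1$ endpoints of two different paths.
\item Your second claim fails for the pair $a,b$: only $a_1,a_3,b_0,b_2$ carry crossing edges.
\item Your inductive step fails. Paths $a$ and $b$ have $4$ and $5$ crossing edges, but deleting $a$ gives $d(b_0,c_0)=5$ and deleting $b$ gives $d(a_3,c_3)=6$. The only path whose deletion leaves a valid instance is $c$, and it carries just $3$ crossing edges (what remains is a $9$-edge instance for $k=2$).
\item Your fallback sums per-path bounds over non-hub paths, so it double counts any crossing edge joining two non-hub paths.
\end{itemize}

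For comparison, the paper's proof of this corollary only counts the edges of $DDPG_{k,3}$. It takes the lower bound from the preceding theorem, whose argument shows merely that no single edge of $DDPG_{k,3}$ can be deleted. That is inclusion-minimality, and the example above shows it does not imply minimum size.

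What your outline can actually establish is $MinE(1,3)=3$, $MinE(2,3)=9$, and $MinE(k,3)\le 7k-6$ for $k\ge 3$. For $k=2$ your $8$-vertex case analysis does work. Two crossing edges cannot bring all four endpoint pairs within distance $3$ without either shortening one of the paths or creating a pair at distance $4$. A connected graph with $8$ edges on more than $8$ vertices is a tree, and trees have $\#TDDP=1$.
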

\begin{proof}
The disjoint diametral paths together have $3k$ edges.  When $k>1$, $k-2$ of the edges each have $4$ other edges connecting the path to $p_1$, giving $4k-8$ edges.  Finally, edge $p_2$ has $3$ other edges connecting it to path $p_1$.
\end{proof}

In all of the extremal graphs defined in this section, the vertex disjoint diametral paths partition the vertices of the graph into sets of size $d+1$ for diameter $d$.

%% file: 05-Conclusion.tex
In this paper, we introduce the study of totally disjoint diametral paths in connected graphs.   First, we show that determining the maximum size of a set of totally disjoint diametral paths in an arbitrary graph is $NP$-complete. We explore the maximum sizes of sets of totally disjoint diametral paths in several classes of graphs, presenting efficient algorithms for 2-paths and threshold graphs, and structural bounds for proper interval graphs. Finally, we present classes of extremal graphs with $k$ totally disjoint diametral paths of length $d$ having the fewest possible number of edges. 

The research reported here leaves several open problems. Determining $\#TDDP(Q_n)$ for the hypercube of dimension $n\geq 7$ is an open question. Finding other classes of graphs $G$ for which $\#TDDP(G)$ can be determined efficiently is also open for future work. In particular, determining whether $\#TDDP$ can be computed in polynomial time for proper interval graphs, for (maximal) outerplanar graphs, or for graphs of bounded treewidth would be of interest. The work done here on 2-paths suggests an approach to solve the problem for outerplanar graphs.